
\documentclass[12pt]{article}
\usepackage{amsmath,amsthm,amssymb,latexsym,color}
\usepackage{bbm}
\voffset-2.5 cm
\hoffset -1.5 cm
\textwidth 16 cm
\textheight 23 cm
\thispagestyle{empty}

\theoremstyle{plain}
\newtheorem{theor}{Theorem}[section]
\newtheorem{claim}[theor]{Claim}
\newtheorem{prop}[theor]{Proposition}

\newtheorem{cor}[theor]{Corollary}
\newtheorem{lemma}[theor]{Lemma}
\newtheorem{rem}[theor]{Remark}
\theoremstyle{remark}

\def\R{{\mathbb R}}
\def\Prob{{\mathbb P}}

\def\row{R}

\def\supp{{\rm supp }}

\def\Event{{\mathcal E}}

\def\f{{\mathcal F}}

\def\spn{{\rm span}\,}

\def\Net{{\mathcal N}}

\def\d{{\rm dist}}
\def\Om0{\Omega_0}
\def\Inc{S}
\def\Cl{\widetilde{C}_H}
\def\HH{{\mathcal H}}
\def\omep{\Omega_{2m,\eps}}

\newcommand{\p}{\mathbb{P}}
\newcommand{\E}{\mathbb{E}}

\newcommand{\lam}{\lambda}
\newcommand{\eps}{\varepsilon}
\def\la{\left\langle}
\def\ra{\r\rangle}

\def\nx{\| x \|}
\def\r{\right}

\newcommand{\Mc}{\mathcal{M}_{n,d}}

\newcommand{\il}{I^\ell}
\newcommand{\ir}{I^r}

\newcommand{\st}{\mathcal{T}}

\def\O1{\Omega_1(\eps_1)}
\def\idmat{{\rm Id}}

\def\nn{n_3}
\def\ww{z}
\def\WW{W}
\def\hh{h}
\def\BB{\mathcal{B}}

\def\lll{\ell_0}

\def\bb{{b_\st}}
\def\C{\mathbb{C}}
\def\a{a_1}
\def\aa{a_2}
\def\aaa{a_3}



\title{The smallest singular value of a shifted $d$-regular random square matrix}

\author{
Alexander E. Litvak
\and
Anna Lytova
\and
Konstantin Tikhomirov
\and
Nicole Tomczak-Jaegermann
\and
Pierre Youssef
}
\newcommand\address{\noindent\leavevmode

\medskip
\noindent
Alexander E. Litvak
and Nicole Tomczak-Jaegermann,\\
Dept.~of Math.~and Stat.~Sciences,\\
University of Alberta, \\
Edmonton, AB, Canada, T6G 2G1.\\
\texttt{\small
e-mails:  aelitvak@gmail.com \, \, and \, \,
nicole.tomczak@ualberta.ca}\\

\medskip

\noindent
Anna Lytova,\\
Faculty of Math., Physics, and Comp. Science,\\
University of Opole,\\
48, Oleska str., 45-052,\\
Opole, Poland.\\
\texttt{\small
e-mail:
alytova@uni.opole.pl
}\\

\medskip

\noindent
Konstantin Tikhomirov,\\
Dept.~of Math.,
Princeton University,\\
Fine Hall, Washington road,\\
Princeton, NJ 08544.\\
\texttt{\small
e-mail:   kt12@princeton.edu}\\

\medskip

\noindent
Pierre Youssef,\\
Universit\'e Paris Diderot,\\
Laboratoire de Probabilit\'es, Statistique et Mod\'elisation,\\
75013 Paris, France.\\
\texttt{\small
e-mail:  youssef@math.univ-paris-diderot.fr}
}

\date{}

\begin{document}

\maketitle

\begin{abstract}
We derive a lower bound on the smallest singular value of a random $d$-regular matrix,
that is, the adjacency matrix of a random $d$-regular directed graph.
Specifically, let $C_1<d< c n/\log^2 n$ and let $\Mc$ be the set of all $n\times n$ square matrices with $0/1$ entries, such
that each row and each column of every matrix
in $\Mc$ has exactly $d$ ones. Let $M$ be a random matrix uniformly distributed on $\Mc$.
Then the smallest singular value $s_{n} (M)$ of $M$
is greater than $n^{-6}$ with probability at least $1-C_2\log^2 d/\sqrt{d}$,
where $c$, $C_1$, and $C_2$ are absolute positive constants
independent of any other parameters.
Analogous estimates are obtained for matrices of the form $M-z\,\idmat$, where $\idmat$ is the identity matrix and
$z$ is a fixed complex number.
\end{abstract}

\noindent
{\small \bf AMS 2010 Classification:}
{\small
primary: 60B20, 15B52, 46B06, 05C80;
secondary: 46B09, 60C05
}

\noindent
{\small \bf Keywords: }
{\small
Adjacency matrices, anti-concentration, condition number, invertibility, Littlewood--Offord theory,
random graphs, random matrices, regular graphs, singular probability, singularity, sparse matrices,
smallest singular value}

\tableofcontents

\section{Introduction}
\label{s:intro}

The present paper belongs to a
sub-area of the random matrix theory often called
{\it  non-limiting} or {\it non-asymptotic} (see e.g. \cite{LedT,
RV-ICM}). Development of this direction of research was
motivated by some problems in statistics, compressed sensing and
computer science in general, as well as in asymptotic geometric analysis.
The object of the study is a large random matrix of a fixed size, and
a typical goal is to obtain quantitative probabilistic estimates for
its eigenvalues or singular values in terms of dimension of the
matrix.
In this paper we avoid a discussion
of corresponding {\it limiting} results, and refer, in particular, to
books \cite{BS2010,EY} and references therein for more information
(see also \cite{DavS} for interplay between limiting and non-limiting
results and for applications).

The study of the non-limiting behaviour of the smallest and the
largest singular values is a very important research direction.
Recall that for an $m\times n$ ($m\geq n$) matrix $A$, the largest and
the smallest singular values can be defined as
\[
  s_{1}(A) = \|A\| =\max_{\|z\|_2=1} \|Az\|_2 \quad \mbox{ and } \quad
  s_n(A)= \min_{\|z\|_2=1} \|Az\|_2,
\]
where $\|A\|$ denotes the operator norm of $A$ acting from $\ell_2^n$
to $\ell_2^m$ (also called the spectral norm).  In case when $m=n$ and
the matrix $A$ is invertible, we have $s_n(A)=1/\|A^{-1}\|$.
The knowledge of the magnitude of the extreme singular values gained
significance in connection with asymptotic geometric analysis,
numerical analysis (in particular, smoothed analysis of the condition number), the
problem of approximating covariance matrices of multidimensional distributions,
the study of delocalization properties of eigenvectors.
 Moreover, for square non-Hermitian matrices, estimating the extreme singular values forms
a crucial step in computing the limit of the empirical spectral
distribution.  We provide
a brief overview of those directions.

First, assume that $A$ is a tall rectangular matrix with independent
rows (satisfying certain conditions). Estimating $s_1(A)$ can be quite
difficult (excluding the subgaussian case, see, for example,
\cite[Fact~2.4]{LPRT}).  The lower bounds for $s_n(A)$ often require
covering arguments, estimates for small ball probabilities,
anti-concentration results, and on many occasions bounds on $s_1(A)$.
For bounds on $s_1(A)$ and $s_n(A)$, we refer to \cite{ALPT, MenP,
  GLPT, T} and references therein.  We would like to notice that
strong estimates for $s_n(A)$ for this model can be obtained bypassing
analysis of $s_1(A)$, and under very weak conditions on the
distributions of the rows \cite{KM,Ol,Y1,Y2} (see also \cite{GeMu} for
related yet different setting).

Another model of randomness, which is closer to the main topic of our
paper, involves square random matrices or matrices with the aspect
ratio $m/n$ very close to one, with i.i.d.\ entries.  In this setting,
obtaining optimal quantitative lower bounds for $s_n(A)$ requires more
delicate arguments, compared to the model considered above.  We refer,
in particular, to \cite{LPRT, TV-ann, RV, RV-comm, SV13, RT} and
references therein (see also \cite{AGLPT} for square matrices with
independent log-concave columns).  In the context of numerical linear
algebra, this research direction is  related to estimating the
condition number of a square matrix.  Recall that the condition number
of an $n\times n$ matrix $A$ is defined as
\[
  \sigma (A)= s_1(A)/s_n(A) = \|A\| \, \, \|A^{-1}\|.
\]
The condition number serves as a measure of precision of certain
matrix algorithms \cite[Chapter~III]{NLA1}, \cite{NLA2}.  The study of
the condition number in the random setting goes back to von~Neumann
and his collaborators (see \cite[pp. 14, 477, 555]{VN1} and
\cite[Section~7.8]{VN2}), whose numerical experiments suggested that
for a random $n\times n$ matrix $A$ one should have
$\sigma (A)\approx n$ with high probability.  In a more general
context, when the spectral norm $\|\cdot\|$ is replaced with an
operator norm $\|\cdot\|_{X\to Y}$ for two $n$-dimensional Banach
spaces $X$ and $Y$, the quantity $\|A\|_{X\to Y}\|A^{-1}\|_{Y\to X}$
plays a crucial role in the local theory of Banach spaces and
asymptotic geometric analysis through its relation to the
Banach--Mazur distance \cite{AGA2, AGA3}.  Estimating the condition
number of a shifted matrix $A+B$ (with $A$ random and $B$ fixed) was
put forward as an important problem by Spielman and Teng \cite{ST02},
in context of smooth analysis of algorithms (see, in particular,
\cite{SST06,TV07b,TV10c}).  As a very important application, the
quantitative lower bounds for $s_n(A+B)$, with $B$ being a complex
multiple of the identity, have been used to establish the circular law
for the empirical spectral distribution in the i.i.d.\ model (see
\cite{TV,BC} and references therein for the historical account of the
problem). Indeed, it is known that using the Hermitization technique,
one needs to show the uniform integrability of the logarithmic
potential with respect to the empirical singular value distribution of
the shifted matrix. Bounding the smallest singular value away from
zero is therefore essential for such method to work. As the limiting
distribution is not the aim of this paper and since the uniform
integrability requires also a control of the remaining singular
values, we leave this for a future investigation (see \cite{LLTTY-str, LLTTY-c}).


The model studied in this paper differs from the ones discussed above
in two crucial aspects.  Let us set up the framework.  Let $d \le n$
be (large) integers, which we assume to be fixed throughout the paper.
Consider the set $\Mc$ of square $n\times n$ matrices with $0/1$ entries
such that each row and each column of a matrix $M\in \Mc$ contains
exactly $d$ ones. Such matrices will be called $d$-regular. These are
adjacency matrices of $d$-regular digraphs (directed graphs), where we
allow loops but do not allow multiple edges. On $\Mc$ we take the
uniform probability measure, turning $\Mc$ into a probability space,
and consider the random matrix distributed according to this measure.
The two main differences from the models mentioned in the
previous paragraphs are complex dependencies between the matrix
entries and (for $d\ll n$) sparsity of the matrix, i.e., large number
of zero entries.

The question of estimating $s_n(M)$ (or, more generally, $s_n(M+B)$
for a fixed matrix $B$), where $M$ is uniformly distributed in $\Mc$,
can be justified in two respects. First, this is a natural model with
complex dependencies between the matrix entries, which does not allow
the use of standard conditioning arguments (such as fixing the span of
$n-1$ rows of a random matrix and studying the conditional
distribution of the distance of the remaining row to the
span). Techniques developed for treating this model can potentially be
adapted to more general models with dependencies. Second, as we show
in this paper, unlike the Erd\H{o}s--R\'enyi random model (see below for
the definition and a more detailed comparison), the $d$-regularity
condition
guarantees strong lower bounds on $s_n(M)$ with large probability even
in the case when $d\ll\log n$ when the corresponding Erd\H{o}s--R\'enyi
adjacency random matrix with the parameter $p=d/n$ is singular with
large probability. This provides a better understanding as to what
causes singularity of sparse random matrices (``local'' obstructions
to invertibility such as a zero row in the Erd\H{o}s--R\'enyi model
versus ``global'' obstructions when the non-trivial null vectors have
many non-zero components).


Singularity of adjacency matrices of uniform random $d$-regular
digraphs was first considered by Cook in \cite{Cook-adjacency}.  He
adapted to the case of directed graphs a conjecture of Costello and Vu
from \cite[Section~10]{CV}, which asserted that for $3\leq d \leq n-3$
with probability going to 1 as $n$ goes to infinity the adjacency
matrix of a random $d$-regular undirected graph is non singular (see
also Vu's survey  \cite[Problem~8.4]{Vu-surv} and  2014 ICM talks by Frieze
\cite[Problem~7]{Frieze ICM} and Vu \cite[Conjecture~5.8]{Vu}).
 The argument in \cite{Cook-adjacency} was
based on discrepancy properties of random digraphs studied in
\cite{Cook-digraphs}, together with some anti-concentration arguments
and a sophisticated use of the simple switching operation.  It
established non-singularity of the adjacency matrix with a large
probability for $d\geq C \log ^2 n$.

The question about singularity of adjacency matrices in the case
$d\leq \log n$ remained open, moreover it was not clear whether the
condition $d\gg \log n$ comes from limitations of the method used in
\cite{Cook-adjacency} or if a random matrix uniformly distributed on
$\Mc$ becomes singular in this regime.  As we mentioned above, in the
Erd{\H{o}}s--R\'enyi model, a random matrix is singular with
probability close to one in the case $d\ll\log n$.  In \cite{LLTTY:15}
(see also \cite{LLTTY-cras}), the authors of the present paper were
able to partially answer this question by showing that a random
$d$-regular matrix is non-singular for all $d$ bigger than a large
universal constant, however, the probability of the singularity was estimated
from above by a negative power of $d$ (see also \cite{LLTTY-rank}, where we
proved  that the rank of such matrices should be at least $n-1$ with
probability going to one as $n$ grows to infinity).  The main novelty of
\cite{LLTTY:15} compared to \cite{Cook-adjacency} rested on three new
ingredients -- a particular version of the covering argument which is
applied to study the structure of the kernel of random matrices, on a
different set of properties of random digraphs, and on a new
approach to anti-concentration results.

However, both papers \cite{Cook-adjacency} and \cite{LLTTY:15} didn't
provide any quantitative estimates.
Combining methods from \cite{Cook-adjacency} and \cite{LLTTY:15} with
an elaborate chaining argument, in recent papers \cite{Cook-circ} and
\cite{BCZ}, quantitative lower bounds on the smallest singular value
of the adjacency matrix were proved for the uniform and permutation
models, under an assumption that $d$ is polylogarithmic in $n$.
Moreover, considering shifted adjacency matrices, the authors of
\cite{Cook-circ,BCZ} were able to obtain the circular law for the
eigenvalue distribution (again, for $d$ at least polylogarithmic in
$n$).  Precisely, in $\cite{Cook-circ,BCZ}$ it was shown that, with
some conditions on the shift $\WW,$ the smallest singular value
$s_{n}(M+\WW)$ of a random shifted matrix is at least $n^{-C\log_d n}$
with probability close to one.
 Still papers
$\cite{Cook-circ,BCZ}$ do not provide any bounds for $s_{n}$
when $d$ is growing slower than $\log n$ and moreover, even for $d$
growing faster than $\log n$ but subpolynomial in  $n$, they don't
provide a polynomial in $n$ bound for $s_n$.

\smallskip

The goal of the present paper is to provide polynomial in $n$ lower
bounds on the smallest singular value of a random matrix uniformly
distributed on $\Mc$ for
$d$ larger than a (fixed large) absolute constant.
Our approach results in better bounds not only for small $d$ but for the
entire range $C\leq d\leq cn/\log^2 n$.  Our main result is the
following theorem, in which we also allow shifts of random matrices
for the sake of future applications (see also Remark~\ref{sharpb} for
more precise bounds).

\begin{theor}\label{mainth}
  There are universal constants $C, c>0$ with the following property.
  Let $C<d< c n/((\log n)(\log \log n))$. Then for every $\ww\in \C$
  with $|\ww|\leq d/6$ one has
  \[
    \Prob\Big\{M\in\Mc:\, s_{n}(M-\ww\idmat)\geq n^{-6}
    \Big\}\geq 1- C\log^2 d/\sqrt{d}.\]
\end{theor}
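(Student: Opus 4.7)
The plan is to follow the now-standard decomposition of the unit sphere used in quantitative invertibility estimates, adapted to the dependent $d$-regular setting. Writing $A = M - \ww\idmat$ for a fixed shift $\ww$ with $|\ww|\leq d/6$, I would split $S^{n-1}_{\C}$ into the set $\mathcal{C}$ of \emph{compressible} unit vectors (those within $\ell_2$-distance $\rho$ of some $\delta n$-sparse vector, for suitably small constants $\rho,\delta$) and its complement, the \emph{incompressible} vectors $\mathcal{I}$. The Perron--Frobenius bound $\|M\| = d$ combined with $|\ww|\leq d/6$ yields $\|A\|\leq 7d/6$, which is the norm control needed for the $\eps$-net arguments below. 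It is then enough to prove that, with probability at least $1 - C\log^2 d/\sqrt{d}$, both $\inf_{x \in \mathcal{C}}\|Ax\|_2$ and $\inf_{x \in \mathcal{I}}\|Ax\|_2$ exceed the $n^{-6}$ target. Since the shift is deterministic, it plays no role in the probabilistic arguments below beyond modifying the diagonal of $A$ and thus the form of the inner products appearing in the anti-concentration step.

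For the compressible part, I would combine a covering of $\mathcal{C}$ by an $\eps$-net of cardinality at most $\exp(C\delta n\log(1/\delta))$ with tensorized small-ball estimates for $\|Ax\|_2$ at a fixed $x$. The essential extra input, specific to the $d$-regular model, is that with overwhelming probability any $k$ columns of $M$ span a subspace of dimension comparable to $k$ (discrepancy and expansion properties of random $d$-regular digraphs of the kind exploited in the earlier works \cite{Cook-adjacency} and \cite{LLTTY:15} on this model). Together with the operator norm bound this allows the net cardinality to be absorbed by the small-ball probabilities and gives $\inf_{x\in\mathcal{C}}\|Ax\|_2 \geq c_1\sqrt{d}$ with probability $1-o(1)$, far stronger than the $n^{-6}$ target.

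For the incompressible part, I would invoke the classical distance-to-hyperplane reduction: if $x\in\mathcal{I}$ satisfies $\|Ax\|_2 \leq t/\sqrt{n}$, then for a linear fraction of indices $i$ one has $\d(\col_i(A), H_i) \leq C t$, where $H_i = \spn\{\col_j(A) : j\neq i\}$. Writing this distance as $|\langle \col_i(A), w\rangle|$ for a unit normal $w$ to $H_i$ reduces matters to proving a small-ball inequality of the form $\Prob\{|\langle \col_i(A), w\rangle|\leq t\} \leq n^{O(1)} t + n^{-c}$, valid uniformly over the family of normals $w$ that can arise. This anti-concentration step is the main obstacle: the column $\col_i(A)$ has exactly $d$ ones and is highly dependent on the rest of the matrix, so the standard Littlewood--Offord theory does not apply directly. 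The strategy is to use the simple switching operation (pairwise exchange of two $0/1$ entries preserving all row and column sums) to inject a controlled amount of Bernoulli-type randomness into $\col_i(A)$ conditionally on the rest of $M$. Combined with a Rogozin-type anti-concentration inequality and with the fact that an incompressible normal $w$ inherits enough ``spread'' in its coordinates to preclude concentration, this should deliver the required bound. The $\log^2 d/\sqrt{d}$ loss in the final probability is expected to come precisely from this step, reflecting the cost of the switching-based decoupling in the $d$-regular model. A union bound over the compressible and incompressible sets then completes the proof.
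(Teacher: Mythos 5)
Your proposal follows the classical compressible/incompressible route, but this is precisely the decomposition the paper argues \emph{does not} transfer to the $d$-regular setting. The authors instead use a three-way split into steep, sloping/almost-constant, and essentially non-constant vectors, keyed to jumps in the non-increasing rearrangement $(x_i^*)$ and to near-constancy, rather than to sparsity. The distinction is not cosmetic; several of your steps depend on features of the i.i.d.\ model that fail here.

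Two concrete gaps. First, for the compressible part you assert $\inf_{x\in\mathcal C}\|Ax\|_2\geq c_1\sqrt d$ with probability $1-o(1)$ via a net of size $\exp(C\delta n\log(1/\delta))$ and ``tensorized small-ball estimates.'' In the $d$-regular model, rows are not independent so there is no tensorization, and, more to the point, a vector $x$ close to a $k$-sparse vector is seen by only $\lesssim kd$ rows (each column has $d$ ones), so the useful ``randomness'' shrinks with $d$. The per-row anti-concentration obtainable by switching is a single-scale bound of order $1/\sqrt d$, not of order $t$; for $d$ in the range $C\leq d\leq \mathrm{polylog}\,n$ the net-vs-probability balance does not close without the jump structure. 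This is exactly why the paper introduces the steep classes $\st_0,\st_1,\st_2$: for a jump of order $d$ it finds, by regularity (Lemma~\ref{c:SJ}), many rows meeting the top $k$ coordinates in exactly one index and meeting the next block in none, which gives a \emph{deterministic} lower bound for $\st_0$ and drastically smaller nets for $\st_1,\st_2$. Your proposal has no mechanism playing that role.

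Second, for the incompressible part you invoke the column-distance reduction and a Rogozin-type bound $\Prob\{|\langle \col_i(A),w\rangle|\leq t\}\leq n^{O(1)}t+n^{-c}$, arguing that incompressibility of the normal $w$ gives enough spread. This overshoots what is available and also misses where the spread is actually needed. The anti-concentration in this model acts on a random variable supported on the symmetric difference of \emph{two} row supports, a set of size $\sim 2d$, not on all of $[n]$; hence what one can hope for is a single-scale bound $\sim 1/\sqrt{\eps_1 d}$, and incompressibility of $w$ does not imply spread of $w$ on an arbitrary $2d$-element set. The paper circumvents this by (i) replacing incompressibility with the ``essentially non-constant'' condition, (ii) proving, via the no-large-zero-minor property (Proposition~\ref{p:zero minor}) and Lemma~\ref{l:bad rows}, that for most row pairs the singular vector remains non-constant on the corresponding symmetric difference, and (iii) using a \emph{row}-based reduction (Lemma~\ref{l:RV}, Lemma~\ref{lem-distance-rows-deterministic}) with $\row_1+\row_2$ added to the spanning set, which is what makes the simple-switching perturbation live inside the equivalence class $C_H$ and reduces the inner product to a genuine sum of independent Bernoullis (Claim~\ref{claim-KT}). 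Finally, your plan does not address the almost-constant vectors at all: the all-ones direction is perfectly spread (hence incompressible), yet $(M-d\idmat)\mathbf 1=0$, so any scheme that treats incompressible vectors uniformly by anti-concentration must carve out this class, as the paper does with $\BB(\rho)$ and Theorem~\ref{lem: a-c}.
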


It is natural to compare our model with the Erd{\H{o}}s--R\'enyi
model, i.e.~matrices whose elements are i.i.d.~Bernoulli $0/1$
variables with the expectation $d/n$.  Intuitively one would expect
that $d$-regular matrices should behave in a similar way to the
Erd{\H{o}}s--R\'enyi model.  This in turn seems to be similar (after
applying a proper normalization $\sqrt{d/n}$) to random $\pm 1$
matrices, where values $1$ and $-1$ appears with probability
$1/2$. Since for the latter model one has $s_{n} \approx 1/\sqrt{n}$,
we would expect the answer $s_{n}\approx \sqrt{d}/n$ for both
$d$-regular matrices and for the Erd{\H{o}}s--R\'enyi model. Indeed,
the Erd{\H{o}}s--R\'enyi model was recently treated in \cite{BasRud},
where it was proved that with high probability
$s_{n}\geq c (d/n)^{-c/\log d} \sqrt{d}/n$, provided that
$c\log n \leq d \leq n-c\log n$. Note that if $d$ is polynomial in $n$
then this gives the expected bound $\sqrt{d}/n$. However, there is one
delicate point in such a comparison. It is easy to see that for
$d< \log n$ a matrix in the Erd{\H{o}}s--R\'enyi model has a zero row
with probability more than half, therefore more than half of
matrices in this model are singular. To the contrary, our theorem
shows that in the case of $d$-regular matrices most matrices are
non-singular. In particular, this means that the regularity prevents a
matrix from being singular, in a sense reducing the randomness.

\smallskip

The remaining part of the introduction is devoted to a brief
description of main ideas and to a short overview of the proof of
Theorem~\ref{mainth}.
An often employed approach to estimating the smallest
singular value (in other words, to bounding $\|Mx\|_2/\|x\|_2$
from below for every non-zero
$x\in \C^n$) is to partition $\C^n$ and work separately with different types of
vectors. The idea to split the Euclidean sphere into two parts goes
back to Kashin's work \cite{kashin} on an orthogonal decomposition of
$\ell^{2n}_1$, where the splitting was defined using the ratio of
$\ell_2$- and $\ell_1$-norms. A similar idea was used by Schechtman
\cite{Schech} in the same context. In the context of the smallest
singular value one usually splits $\C^n$ into vectors of smaller
complexity (close to sparse vectors) and ``spread'' vectors (in
particular, with a relatively small $\ell_\infty$-norm). Such a
splitting was introduced in \cite{LPRT} (see also \cite{LPRTV}) and
was further formalized later in \cite{RV} into a concept of
``compressible'' and ``incompressible'' vectors in $\C^n$.
Compressible vectors are essentially
vectors of smaller dimension, so the set of compressible vectors has a relatively small
complexity.
Therefore, using the standard $\eps$-nets argument
and the union bound one can obtain good bounds for $\|Mx\|_2/\|x\|_2$ for all compressible vectors.
For incompressible vectors, the question can be reduced to estimating the distance between a column
of the matrix and the linear span of remaining columns, which is in turn bounded using Littlewood--Offord--type
inequalities.

In our model, due to special structure of the matrices (in particular,
due to the lack of independence and due to the sparsity of a matrix) the concept of
compressible and incompressible vectors is not directly applicable. In \cite{Cook-adjacency}, Cook
replaced these notions with another type of structural dichotomy, namely sparse vectors were replaced with a
bigger class of vectors having at least one large level set (where ``large" means {\it of cardinality at least $n/d^c$})
while {\it unstructured vectors} were the ones with small level sets. These notions we also used in
\cite{BCZ, Cook-circ, LLTTY:15}. In \cite{LLTTY:15} the {\it structured vectors} were referred to as {\it almost
constant vectors}, since there ``a large level set" meant {\it of cardinality at least} $n-n/\log d$.
Thus,  an almost  constant vector is a very sparse vector shifted by a constant vector.
In the present  work, we further refine this splitting in order to take full advantage
of the discrepancy properties of random $d$-regular matrices.
Specifically, we define four (overlapping) classes on $\C^n$, which we call {\it steep
vectors}, {\it gradual vectors} (that is non-steep), the {\it almost
constant vectors}, and the {\it essentially non-constant
vectors} (that is the complement of almost
constant vectors).  Roughly speaking, almost constant vectors are those with
many coordinates almost equal to each other. The gradual vectors are
vectors $x=(x_i)_i\in \C^n$, whose sequence $(x_i^*)_i$ (a
non-increasing rearrangement of $(|x_i|)_i$) has a regular decay,
i.e., has no significant jumps, where by {\it a jump} we mean
$x_k^*\gg x_m^*$ for some $k\ll m$.
The steep vectors are vectors
possessing such jumps.


The idea to consider steep vectors comes from the following observation.
A steep vector $x$ possesses a ``steep'' jump in its non-increasing rearrangement which induces a partitioning of
the graph into the set of vertices (indices) corresponding to large coordinates and those associated with small coordinates.
Now the expansion properties of the random $d$-regular graph automatically imply that the set of vertices corresponding to large coordinates has
a large neighborhood (many vertices are connected to the set)
and, moreover, many vertices are simply (i.e., through one edge) connected to the set. In terms
of the $d$-regular matrix, this translates into having relatively many rows which have exactly one entry equal to one within the set of columns
corresponding to large coordinates of $x$.
Then the inner product of each such row with the steep vector $x$ is large by absolute value thanks
to the big ratio of the magnitudes of large and small coordinates of $x$: the inner product is dominated by the value of the unique large coordinate of $x$ corresponding to the aforementioned non-zero component of the selected row.
The implementation of this naive idea is more involved and requires a careful selection of the size of the jump
(responsible for the magnitude of the inner product) and its location (which must take into consideration the
graph expansion properties).
To this, another difficulty adds up, lying in the construction of the associated nets as one needs
to balance the size of the net with the individual probability bounds.
The actual argument is technically involved
since we are required to distinguish several types of jumps as well as different jump locations and
combine these with very delicate construction of the $\eps$-nets.
It will be further discussed in
Section~\ref{steep}.

Bounding the magnitude of the matrix-vector product for almost constant gradual vectors is
straightforward. First, notice that the $\ell_2$-norm of an almost constant gradual vector is comparable to
the $\ell_2$-norm of its ``constant part''. Moreover, employing properties of random $d$-graphs, one can show that
there are many rows for which most of their support lies on the ``almost
constant'' part of the vector. This further implies that the inner product of such rows with the vector is separated from zero,
and knowing the $\ell_2$ norm of the vector thus provides uniform quantitative lower bounds on the
product of our random matrix with almost constant gradual vectors.

After we obtain bounds for the above two classes it remains to deal with
essentially non-constant vectors.
Using general algebraic properties of square matrices
we reduce the problem of estimating the smallest singular number to
estimating distances between rows (or columns) of the matrix and
certain subspaces (similar reductions were used in \cite{RV, TV}). More precisely,
we consider quantities of the form
$$
\d\big(\row_i(M), \spn\big\{\{\row_k(M)\}_{k\neq i,j},\, \row_i(M)+\row_j(M)\big\}\big),
$$
for pairs of indices $1\leq i\neq j\leq n$. The first observation is that the subspace appearing above is invariant under simple switchings
between the $i$-th and $j$-th rows.
Then  we condition on a realization
of the subspace $\{\row_k(M)\}_{k\neq i,j}$ and consider all matrices sharing
the same realization of this subspace (which will form an equivalence class).
We use the randomness of the remaining two rows to control the inner product of a normal vector to the subspace
with the $i$-th row. A key observation is that for most pairs of rows,
the restriction of an essentially non-constant vector to the support of those
two rows remains ``non-constant''. This step requires two properties of random $d$-regular digraphs which we
proved in \cite{LLTTY:15}. We show that within the equivalence class the inner product of
the $i$-th row with an essentially non constant vector can be viewed as a sum of independent
random variables, to which anti-concentration inequalities can be
applied. This strategy was developed in \cite{Cook-adjacency, Cook-circ, LLTTY:15}
and is further refined in this work by splitting each class above into subclasses on which the same normal vector can be used
to have a control on the smallest singular value.

\bigskip

\noindent
{\bf Acknowledgement.}
We are grateful to an anonymous referee for careful reading the first draft of the
manuscript and many valuable suggestions, which helped us to improve presentation.
The second and the third named authors would like to thank
University of Alberta for excellent working conditions
in January--August 2016,
when a significant  part of this work was done.

\section{Preliminaries}
\label{preliminaries}

By ``universal'' or ``absolute'' constants we always mean numbers
independent of all involved parameters,
in particular independent of $d$ and $n$. When we say that a parameter (or a constant) is sufficiently large
(resp. sufficiently small) it means that there exists an absolute positive constant such that the corresponding statement
or inequality holds whenever the parameter is larger (resp. smaller) than this absolute constant.
Given positive integers $\ell<k$, we denote the sets
$\{1, 2, \ldots , \ell\}$ and $\{\ell, \ell + 1,  \ldots , k\}$ by $[\ell]$ and $[\ell, k]$, respectively.
For any two real-valued functions $f$ and $g$ we write $f\approx g$ if there are two absolute positive constants
$c$ and $C$ such that $cf\le g\le Cf$. By $\idmat$ we denote the $n\times  n$ identity  matrix.
For $I\subset [n]$, let $I^c:=[n]\setminus I$ denote the complement of $I$ in $[n]$
and let $P_I$ denote the operator of orthogonal projection on the coordinate subspace $\C^I$.
For a vector $x=(x_1,\dots, x_n)\in \C^n$, we denote its
$\ell_{\infty}$-norm by $\|x\|_{\infty}=\max_i |x_i|$ and its $\ell_2$-norm by $\|x\|_2$.
We denote also $\bar x =(\bar x_i)_{i= 1}^n$, where $\bar z$ is the complex conjugate of $z\in \C$,
and by $(x_i^*)_{i}^n$ we denote the non-increasing rearrangement of the sequence $(|x_i|)_{i=1}^n$.
We use $\la\cdot, \cdot \ra$ for the standard inner product on $\C^n$, that is
$\la x, y \ra = \sum _{i= 1}^{n} x_i \bar y_i$.
The  unit ball of the complex space $\ell _\infty^n$ is denoted by $B_\infty^n$.
To simplify notation, we don't distinguish between row and
column vectors, this will always be clear from the context. In particular, for an $n\times n$ matrix $U$ and
a vector $x\in \C^n$, we have
$$
 U x=(\langle R_i(U) , \bar x  \rangle)_{i=1}^n \quad \mbox{ and } \quad x \, U =\sum _{i=1}^n x_i R_i(U),
$$
where $R_i(U)$, $i\leq n$, denote rows of $U$.
By  $\|U\|$ we denote the operator norm of $U$, considered as a linear operator $U$ from (complex) $\ell_2$ to $\ell_2$.
Note also that by the Perron--Frobenius theorem for
every $M\in \Mc$ one has $\|M\|=d$.

\smallskip

We will use the following  anti-concentration Littlewood--Offord type lemma
(\cite{Er}, see also \cite{Kl}).
\begin{prop}
\label{p:Erdos}
Let $\xi_1$, $\xi_2$, ..., $\xi _m$ be independent $\pm 1$ Bernoulli random variables and let $x_1$, $x_2$, ..., $x _m$
be complex numbers such that $|x_i|\ge 1$, $i\le m$. Then for every $t\geq 1$ one has
$$
 \sup_{a\in \C}\p\Big(\big|\sum_{i=1}^m\xi_i  x_i-a\big|< t\Big)\leq \frac{C_{\ref{p:Erdos}}\,t}{\sqrt{m}},
$$
where $C_{\ref{p:Erdos}}>0$ is a universal constant.
\end{prop}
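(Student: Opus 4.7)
My plan is to reduce Proposition~\ref{p:Erdos} to the classical real-valued Erd\H{o}s--Littlewood--Offord inequality by decomposing into real and imaginary parts, and then to establish the real case via Sperner's theorem.

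\emph{Step 1: Reduction to the real case.} Since $|x_k|\geq 1$, for every $k$ either $|\mathrm{Re}\,x_k|\geq 1/\sqrt{2}$ or $|\mathrm{Im}\,x_k|\geq 1/\sqrt{2}$. Let $I\subset[m]$ be the set of indices of the first type; by the symmetry between real and imaginary parts (obtained by multiplying all $x_k$ and $a$ by $\sqrt{-1}$), I may assume $|I|\geq m/2$. Condition on the values $(\xi_k)_{k\notin I}$. The event $\{|\sum_{k=1}^m \xi_k x_k-a|<t\}$ then implies the one-dimensional event
\[
\Bigl|\mathrm{Re}\Bigl(\sum_{k\in I}\xi_k x_k\Bigr)-a'\Bigr|<t,
\]
where $a'\in\R$ is constant under the conditioning. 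Setting $y_k=\mathrm{Re}\,x_k$ for $k\in I$, so that $|y_k|\geq 1/\sqrt{2}$, it suffices to bound $\p(|\sum_{k\in I}\xi_k y_k-a'|<t)$ uniformly in $a'$.

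\emph{Step 2: Real Littlewood--Offord via Sperner.} It suffices to establish that for any real numbers $z_k$ with $|z_k|\geq 1$, any $b\in\R$, and any $s\geq 1$,
\[
\p\Bigl(\bigl|\sum_{k\in I}\xi_k z_k-b\bigr|<s\Bigr)\leq \frac{C_0 s}{\sqrt{|I|}}
\]
for some universal $C_0$; applying this with $z_k=\sqrt{2}\,y_k$ and $s=\sqrt{2}\,t\geq 1$ completes Step~1. For $s=1$, after flipping signs so that $z_k>0$, observe that if two sign sequences $\eps,\eps'\in\{\pm 1\}^{I}$ both yield partial sums in an open interval of length $2$, then the sets $\{k:\eps_k=+1\}$ and $\{k:\eps'_k=+1\}$ cannot satisfy a strict inclusion, since flipping any nonempty collection of $-1$'s to $+1$'s increases the sum by at least $2$. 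The ``good'' sign patterns therefore form an antichain in the Boolean lattice on $I$, and Sperner's theorem bounds their number by $\binom{|I|}{\lfloor |I|/2\rfloor}\leq C\cdot 2^{|I|}/\sqrt{|I|}$. The general $s\geq 1$ case follows by covering $(b-s,b+s)$ by at most $\lceil s\rceil+1$ sub-intervals of length $2$.

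Combining the two steps, since $|I|\geq m/2$, I obtain the claimed bound $C_{\ref{p:Erdos}}\,t/\sqrt{m}$. The only delicate point is the complex-to-real reduction in Step~1: the loss of a factor $1/\sqrt{2}$ in the lower bound on $|y_k|$ is harmless precisely because $t\geq 1$, so after rescaling one still lands in the regime $s\geq 1$ covered by Sperner's argument. Everything else is entirely classical and goes back to Erd\H{o}s.
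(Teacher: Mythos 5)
The paper does not give a proof of Proposition~\ref{p:Erdos}; it simply cites the references \cite{Er} (Erd\H{o}s's 1945 Sperner-based argument for real coefficients) and \cite{Kl} (Kleitman's 1970 extension to vectors in a normed space, which covers $\C$ directly). So there is no in-paper argument to compare against, but it is still worth saying how your proof relates to those sources.

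Your proof is correct, and it is a genuinely different route from the one the paper implicitly relies on. The reference \cite{Kl} proves the vector-valued (hence complex) Littlewood--Offord bound directly by a clever combinatorial partition of the cube into symmetric chains; no projection onto $\R$ is needed. You instead reduce the complex problem to the one-dimensional Erd\H{o}s/Sperner result by a conditioning-and-projection step: split $[m]$ into indices where the real part of $x_k$ is $\geq 1/\sqrt{2}$ and indices where the imaginary part is, keep the majority side $I$ (after possibly multiplying everything by $\sqrt{-1}$), condition on the signs outside $I$, and dominate the complex small-ball event by the real one $|\mathrm{Re}(\cdot)-a'|<t$. The cost is a loss of a factor $\sqrt{2}$ in the lower bound on the coefficients and a loss of $\sqrt{2}$ in the effective number of summands $|I|\geq m/2$, both absorbed into the constant because the hypothesis $t\geq 1$ guarantees you stay in the regime where Erd\H{o}s's bound applies after rescaling. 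The gain is that you only need Sperner's theorem in dimension one, which is more elementary than Kleitman's chain decomposition. The only technicality to be slightly careful about is the covering of the length-$2t$ window by open (or half-open) length-$2$ windows so that the antichain argument applies on each piece without boundary double-counting; your $\lceil s\rceil+1$ intervals with overlap handles this, and since $s\geq 1$ the resulting prefactor is still $O(s)$. So the argument closes cleanly.
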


The next lemma is a ``quantified'' version of Claim~4.7 from \cite{LLTTY:15}.

\begin{lemma}\label{l: quantified claim4.7}
Let $x=(x_1,x_2,\dots,x_m)\in\C^m$ be a vector such that for some $\rho>0$ and $\eps \in(0,1)$ we have
$$\forall\lambda\in\C\quad \big|\big\{i\leq m:\,|x_i-\lambda|\geq \rho\big\}\big|\geq \eps m.$$
Then there are disjoint subsets $J$ and $Q$ of $[m]$ such that
$$|J|,|Q|\geq  \eps m/4 \quad\mbox{ and }\quad \forall i\in J,\,\forall j\in Q\;\;|x_i-x_j|\geq \rho/\sqrt{2}.$$
\end{lemma}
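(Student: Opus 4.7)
\medskip
\noindent\textbf{Proof plan.}
The natural idea is to reduce the two-dimensional (complex) statement to a one-dimensional one by projecting onto the real or imaginary axis, and then exploit the hypothesis at a well-chosen point $\lambda$. Concretely, I would let $\alpha$ be a median of the real parts $(\text{Re}\, x_i)_{i\le m}$ and $\beta$ a median of the imaginary parts $(\text{Im}\, x_i)_{i\le m}$, so that each of the four sets
\[
\{i:\text{Re}\,x_i\le\alpha\},\ \{i:\text{Re}\,x_i\ge\alpha\},\ \{i:\text{Im}\,x_i\le\beta\},\ \{i:\text{Im}\,x_i\ge\beta\}
\]
has cardinality at least $m/2$, and then apply the hypothesis with $\lambda_0:=\alpha+i\beta$, which yields a set $F\subseteq[m]$ of size $\ge\eps m$ on which $|x_i-\lambda_0|\ge\rho$.

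The key elementary observation is that for any complex number $z$, $|z|\ge\rho$ forces $\max(|\text{Re}\,z|,|\text{Im}\,z|)\ge\rho/\sqrt{2}$. Applying this to $z=x_i-\lambda_0$ for $i\in F$ and pigeonholing, I would split $F=F^{\text{Re}}\cup F^{\text{Im}}$ according to whether the real or imaginary deviation is the large one, and assume without loss of generality that $|F^{\text{Re}}|\ge\eps m/2$. Splitting $F^{\text{Re}}$ further by the sign of $\text{Re}\,x_i-\alpha$, I get a subset (say $F^+\subseteq F^{\text{Re}}$) of size at least $\eps m/4$ on which $\text{Re}\,x_i\ge\alpha+\rho/\sqrt{2}$.

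I would then take $J:=F^+$ and $Q:=\{i\le m:\text{Re}\,x_i\le\alpha\}$. These two sets are disjoint because every $j\in J$ satisfies $\text{Re}\,x_j>\alpha$, while every $i\in Q$ satisfies $\text{Re}\,x_i\le\alpha$. The cardinality bounds $|J|\ge\eps m/4$ and $|Q|\ge m/2\ge\eps m/4$ follow from the construction (the latter from the defining property of the median $\alpha$, together with $\eps\le 1$). Finally, for any $i\in J$ and $j\in Q$ one has $|x_i-x_j|\ge|\text{Re}\,x_i-\text{Re}\,x_j|\ge\rho/\sqrt{2}$, which is exactly the required separation. The cases where $|F^{\text{Im}}|\ge\eps m/2$, or where $F^-$ (the negative side) is larger than $F^+$, are handled by the obvious symmetric choices (swapping real/imaginary parts, or flipping the inequality with respect to the median).

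There is no real obstacle to this argument; the only point requiring a little care is to ensure the sets $J$ and $Q$ are genuinely \emph{disjoint} (rather than just well-separated), which is why it is important to define $Q$ via the median itself ($\text{Re}\,x_i\le\alpha$) rather than via a shifted threshold. This choice of $Q$ also automatically supplies the cardinality bound $|Q|\ge m/2$ without needing any further control from the hypothesis.
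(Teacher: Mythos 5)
Your argument is correct, and it takes a genuinely different route from the paper's. The paper first proves, by a contradiction argument, that for one of the two coordinates (real or imaginary) a one-dimensional version of the hypothesis holds at scale $\rho/\sqrt{2}$ with density $\eps m/2$; it then sorts that coordinate in decreasing order, sets $J$ and $Q$ to be the top and bottom $\lceil\eps m/4\rceil$ indices, and invokes the one-dimensional hypothesis a second time (at $\lambda=y^1_p$) to show the two blocks are separated. You instead invoke the hypothesis exactly once, at the single point $\lambda_0=\alpha+i\beta$ given by coordinate-wise medians, then pigeonhole among the real/imaginary direction and the sign to produce a block $F^+$ of size $\geq\eps m/4$ displaced by at least $\rho/\sqrt 2$ to one side of the median in one coordinate, and pair $F^+$ against the opposite median half. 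This is somewhat more economical (the hypothesis is used at only one $\lambda$, and no sorting is needed), and it actually gives the stronger bound $|Q|\geq m/2$ rather than just $\eps m/4$. In exchange, the paper's intermediate reduction -- that one of $\mathrm{Re}\,x$, $\mathrm{Im}\,x$ inherits the universal spread property at scale $\rho/\sqrt 2$ -- is a clean one-dimensional statement that could be reused elsewhere. The only point of care in your write-up, which you handle correctly, is that $Q$ must be defined by the unshifted threshold $\mathrm{Re}\,x_i\le\alpha$ so that disjointness from $J$ and the cardinality bound both come for free from the median property.
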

\begin{proof}
Let $y^1:=\mathrm{Re}(x)$ and $y^2:=\mathrm{Im}(x)$ be the real and imaginary part of $x$,
respectively. First, observe that there is $k\in\{1,2\}$ such that
\begin{equation}\label{eq: aux 087ga}
\forall\lambda\in\R\quad \big|\big\{i\leq m:\,|y^k_i-\lambda|\geq \rho/\sqrt{2}\big\}\big|\geq \eps m/2.
\end{equation}
Indeed, assume the opposite, i.e., that there exist real numbers $\lambda_1$ and $\lambda_2$ such that
$$\big|\big\{i\leq m:\,|y^k_i-\lambda_k|\geq \rho/\sqrt{2}\big\}\big|< \eps m/2,\quad k=1,2.$$
Then for $\lambda:=\lambda_1+i\,\lambda_2$ we necessarily have
$$\big|\big\{i\leq m:\,|x_i-\lambda|\geq \rho\big\}\big|< \eps m,$$
contradicting the assumption of the lemma.

Without loss of generality, we can assume that condition \eqref{eq: aux 087ga} holds for $k=1$,
and that the coordinates of $y^1$ are arranged in the non-increasing order.
Denote $p:=\lceil \eps m/4\rceil$.
Set $J:=\{1,2,\dots,p\}$ and $Q:=\{m-p+1,\dots,m\}$.
Clearly, it is enough to show that $y^1_{p}\geq \rho/\sqrt{2}+y^1_{m-p+1}$.
Assume the opposite. Then the set $I:=\{p,\dots,m-p+1\}$
has cardinality strictly greater than $m-\eps m/2$, and for $\lambda:=y^1_{p}$ we have
$|y^1_i-\lambda|< \rho/\sqrt{2}$ for all $i\in I$ contradicting \eqref{eq: aux 087ga}. The result follows.
\end{proof}

\smallskip

\smallskip

We will need the following simple combinatorial claim about relations.
Let $A$, $B$ be sets, and $R\subset A\times B$ be a relation.
Given $a\in A$ and $b\in B$, the image of $a$ and preimage of $b$ are defined by
$$
     R(a) = \{ y \in B \, : \, (a,y)  \in  R\} \quad \mbox{ and } \quad
     R^{-1}(b) = \{ x \in A \, : \, (x,b)  \in  R\}.
$$
We also set $R(A)=\cup _{a\in A} R(a)$.
We have the following standard estimate (see e.g. Claim~2.1 in \cite{LLTTY:15}).

\begin{claim} \label{f}
Let $s, t >0$.
Let $R$ be a relation between two finite sets $A$ and $B$ such that for
every $a\in A$ and every $b\in B$ one has $|R(a)|\geq s$ and $|R^{-1}(b)|\leq t$.
Then
$
        s |A|\leq  t |B|.
$
\end{claim}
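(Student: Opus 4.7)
The plan is to prove Claim~\ref{f} by a standard double counting argument applied to the cardinality of the relation $R$, viewed as a subset of $A\times B$. Since each element $a\in A$ has at least $s$ images and each element $b\in B$ has at most $t$ preimages, counting the pairs in $R$ in two different ways will immediately yield the inequality.

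More precisely, I would first write
\[
|R| \;=\; \sum_{a\in A}|R(a)| \;\geq\; s\,|A|,
\]
using the lower bound on image sizes. Then I would write
\[
|R| \;=\; \sum_{b\in B}|R^{-1}(b)| \;\leq\; t\,|B|,
\]
using the upper bound on preimage sizes. Combining these two displays gives $s|A|\leq t|B|$, which is exactly the conclusion.

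There is no real obstacle here: the statement is a purely combinatorial fact whose proof amounts to recognizing that $\sum_a |R(a)|$ and $\sum_b|R^{-1}(b)|$ both compute $|R|$. The only thing worth emphasizing in the writeup is that finiteness of $A$ and $B$ is what makes the interchange of the two counts legitimate (so that $|R|$ is a well-defined finite number), and that the hypotheses are applied pointwise to every $a$ and every $b$, not in some averaged sense. Given the simplicity, I would keep the proof to two or three lines.
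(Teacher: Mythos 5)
Your proof is correct, and it is exactly the standard double-counting argument one expects here: the paper itself does not reprove this claim but simply cites it (as Claim 2.1 of an earlier paper by the same authors), and that reference uses the same two-way count of $|R|$ as a subset of $A\times B$. Nothing to add.
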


\smallskip

We turn now to properties of $d$-regular matrices.
Recall that $\Mc$ denotes the set of all $n\times n$  $0/1$-valued matrices having
sums of elements in every row and in every column equal to $d$
(the set corresponds to adjacency matrices of directed $d$-regular graphs where we allow loops but do not
allow multiple edges).
Given $n\times n$ matrix $U=(v_{ij})$ we denote its
$i$'th row by $\row_i(U)$ and
$\supp\, R_i(U)=\{j\leq n\,:\, u_{ij}\ne 0\}$. For $J\subset [n]$ we also denote
\begin{equation}\label{SJ}
  S_J:=\{i\le n\,:\, \supp\,R_i(M)\cap J\neq\emptyset \},
\end{equation}
that is, $S_J$ is the union of supports of columns indexed by $J$.

Given $k\leq n$ and $\eps \in (0,1)$, let
\begin{equation*}
\Omega_{k, \eps}:=\Big\{M\in \Mc \,:\,\forall J\subset[n] \, \, \text{ with }\, \,
|J|=k
 \, \, \text{ one has }\, \,|S_J|\geq
(1-\varepsilon) d k \Big\}.
\label{Oo}
\end{equation*}
Clearly, if $k=1$ then $\Omega_{k, \eps}=\Mc$.
The following theorem
is essentially  Theorem~2.2 of \cite{LLTTY:15} (see also Theorem 3.1 there).

\bigskip

\begin{theor}
\label{graph th known}
Let $e^8<d\leq n$, $\varepsilon_0=  \sqrt{\log d/d}$, and  $\varepsilon\in [\varepsilon_0,1)$.
Let $k\leq c_{\ref{graph th known}}\eps n/d$,
where $c_{\ref{graph th known}}\in (0, 1)$ is a sufficiently small absolute positive constant.
Then
\begin{equation*}
\mathbb{P}(\Omega_{k,\varepsilon})\geq 1-\exp\left(-\frac{\eps^2 dk}{8}
\log\left(\frac{e\eps c_{\ref{graph th known}} n}{k d}\right)\right),
\end{equation*}
in particular,
\begin{equation*}
\mathbb{P}\left(\bigcup _{k\leq c_{\ref{graph th known}}\eps n/d}\Omega_{k,\varepsilon}\right)
\geq 1-\left( C_{\ref{graph th known}} d/\eps n\right)^{\eps^2 d/8 }.
\end{equation*}
\end{theor}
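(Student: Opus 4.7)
The plan is a union bound over subsets $J \subset [n]$ with $|J| = k$, combined with a switching argument for the bad event on each fixed $J$. Set $t := \lfloor (1-\varepsilon)dk\rfloor$. If $|S_J| < (1-\varepsilon)dk$, then all supports $\supp X_j(M)$, $j \in J$, sit inside some $T \subset [n]$ of cardinality $t$, so a second union bound reduces matters to estimating
$$\Prob(\Omega_{k,\varepsilon}^c) \;\le\; \binom{n}{k}\binom{n}{t}\, \sup_{J,T} p_{J,T},$$
where $p_{J,T} := \Prob\bigl(\supp X_j(M) \subset T \text{ for all } j \in J\bigr)$.

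To estimate $p_{J,T}$ I would use the standard switching operation on $\Mc$: from an offending matrix $M$, pick a $1$-entry $(i,j)$ with $i \in T$, $j \in J$ together with a $1$-entry $(i',j')$ with $i' \notin T$, $j' \notin J$, and whenever $M_{i,j'} = M_{i',j} = 0$ swap the ones. Each swap reduces the number of ones in the $T \times J$ block by one, and the inverse operation requires picking a $1$-entry in $T \times J^c$ together with one in $T^c \times J$. Counting forward versus backward candidates, and absorbing blocked pairs as a lower-order correction (which is legitimate as long as $t, d \ll n$, guaranteed by $k \le c\varepsilon n/d$), one obtains after iteration
$$p_{J,T} \;\lesssim\; e^{O(dk)} (t/n)^{dk},$$
matching the heuristic bound one would get by pretending the $J$-columns are independent uniform $d$-subsets of $[n]$.

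Combining and taking logarithms,
$$\log \Prob(\Omega_{k,\varepsilon}^c) \;\le\; k\log\tfrac{en}{k} + t\log\tfrac{en}{t} - dk\log\tfrac{n}{t} + O(dk).$$
Substituting $t = (1-\varepsilon)dk$ and using $\log((1-\varepsilon)^{-1}) \ge \varepsilon$, the last two terms combine to $-\varepsilon dk\log\bigl(n/((1-\varepsilon)dk)\bigr) + O(dk)$. The hypothesis $\varepsilon \ge \sqrt{\log d/d}$ forces $\varepsilon^2 d \ge \log d$, which allows both the $O(dk)$ error and the entropy term $k\log(en/k) \le k\log(Cd/\varepsilon)$ (valid on the range $k \le c\varepsilon n/d$) to be absorbed into a fraction of the main negative term. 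Careful bookkeeping of constants then yields the claimed per-$k$ bound $\exp\bigl(-\tfrac{\varepsilon^2 dk}{8}\log(e\varepsilon c n/(kd))\bigr)$.

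The consolidated estimate follows by summation: on the admissible range the exponent $\tfrac{\varepsilon^2 dk}{8}\log(e\varepsilon c n/(kd))$ is at least linear in $k$, so $\sum_{k} \exp(-\cdots)$ is dominated, up to a constant, by its $k=1$ term, which equals $(Cd/(\varepsilon n))^{\varepsilon^2 d/8}$. The principal technical difficulty is the switching argument itself: one must show that obstructed swaps (positions already occupied, or $i = i'$, $j = j'$ coincidences) form only a lower-order fraction of the nominal $td(n-t)d$ count of forward candidates. This is precisely where both the lower bound $\varepsilon \ge \sqrt{\log d/d}$ (controlling the density of $d$-regular constraints) and the smallness hypothesis $k \le c\varepsilon n/d$ (keeping $|T|$ well below $n$) are used. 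All the ingredients are present in the proof of Theorem~2.2 of \cite{LLTTY:15}; the stated bound is the quantitative refinement obtained by tracking constants through that argument.
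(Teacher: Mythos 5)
The paper does not actually prove this statement: the surrounding text explicitly imports it from \cite{LLTTY:15} (``essentially Theorem~2.2\ldots\ see also Theorem~3.1 there''), so there is no argument in the present paper against which to compare. Your sketch does reconstruct the approach used there---a union bound over the column set $J$ and the ``trap'' set $T$ of size $t=\lfloor(1-\varepsilon)dk\rfloor$, a simple-switching estimate giving $p_{J,T}\lesssim e^{O(dk)}(t/n)^{dk}$, and then entropy bookkeeping---and that is indeed the right skeleton.

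One intermediate inequality is stated in the wrong direction, however. You claim ``$k\log(en/k)\le k\log(Cd/\varepsilon)$, valid on the range $k\le c\varepsilon n/d$''. But $k\le c\varepsilon n/d$ gives $n/k\ge d/(c\varepsilon)$, hence $\log(en/k)\ge\log(ed/(c\varepsilon))$---a \emph{lower} bound, not an upper bound. For $k=1$ the entropy term equals $\log(en)$, which can be far larger than $\log(Cd/\varepsilon)$ when $n\gg d$. The absorption still goes through, but one must argue it differently: write $\log(en/k)=\log d+\log(en/(dk))$, absorb the $k\log d$ piece via $\varepsilon^2 d\ge\log d$, and absorb the $k\log(en/(dk))$ piece by comparing it directly against the main term $\varepsilon^2 dk\log(e\varepsilon c n/(kd))$, using $\varepsilon^2 d\ge\log d\gg 1$ together with $\log(1/(c\varepsilon))\lesssim\log d$. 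With that correction your per-$k$ bound and the consolidated estimate (with the $k=1$ term dominating the sum) both follow as you describe; the rest of the sketch---including the switching setup and the constraint $k\le c\varepsilon n/d$ keeping $|T|$ small enough for the swap count to be unobstructed---is sound.
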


We will need two more results from \cite{LLTTY:15}. The following is \cite[Proposition~3.3]{LLTTY:15}.

\begin{prop}[Row and columns are almost disjoint]\label{p:disjoint-rows}
Let $\varepsilon\in (0,1)$ and $8\leq d\leq \eps n/6$.
Denote
\begin{align*}
\Omega_1(\eps):=\Big\{M\in \Mc:\,\forall i,j \in [n]\,
 \quad &|\supp(R_i(M)+R_j(M))|\geq 2(1-\varepsilon)d
 \\
 &\mbox{and}\quad\big|\supp\big(R_i\left(M^T\right)+R_j\left(M^T\right)\big)\big|\geq 2(1-\varepsilon)d \Big \}.
\end{align*}
 Then
 \begin{equation*}
 \mathbb{P}(\Omega_1(\eps))\geq 1- {n^2}\, \left(\frac{e d}{\varepsilon n}\right)^{\varepsilon d}.
\end{equation*}
\end{prop}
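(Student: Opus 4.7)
Since $R_i(M)$ and $R_j(M)$ are $0/1$-vectors each with exactly $d$ ones, for $i\ne j$ one has
$|\supp(R_i+R_j)|=2d-|\supp R_i\cap\supp R_j|$, so the row half of $\Omega_1(\eps)$ amounts to
$|\supp R_i\cap\supp R_j|\le 2\eps d$ for all pairs $i\ne j$; the column half is analogous. By invariance of the
uniform measure on $\Mc$ under $M\mapsto M^T$, the row and column sub-events have the same probability. A union
bound over the at most $n^2$ relevant pairs therefore reduces the problem to showing
$$\mathbb{P}\bigl[|\supp R_i\cap\supp R_j|>2\eps d\bigr]\le (ed/(\eps n))^{\eps d}\quad\text{for each fixed }i\ne j.$$

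Fix $i\ne j$ and set $k=\lceil 2\eps d\rceil$. The plan is a second union bound over subsets $K\subset [n]$ of size $k$:
$$\mathbb{P}\bigl[|\supp R_i\cap\supp R_j|\ge k\bigr]\le\binom{n}{k}\max_{|K|=k}\mathbb{P}[K\subset\supp R_i\cap\supp R_j],$$
combined with the estimate $\mathbb{P}[K\subset\supp R_i\cap\supp R_j]\le (Cd/n)^{2k}$ for an absolute constant $C$.
This inner bound is what one would obtain in the Erd\H{o}s--R\'enyi model by independence; in the $d$-regular setting
I would prove it by a simple switching argument on $\Mc$. Given a matrix $M$ in the event, one iteratively performs
$2\times 2$ switches that move a forced $1$ out of positions in $\{i,j\}\times K$, using a column in
$[n]\setminus(\supp R_i\cup\supp R_j)$ and an auxiliary row, to obtain a matrix with strictly fewer forced $1$'s
in $\{i,j\}\times K$. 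At every step the number of admissible forward switches is of order $(n-d)d$ while the
number of inverse switches is of order $d^2$, producing a loss of order $d/n$ per switch; the assumption
$d\le\eps n/6$ ensures a macroscopic gap between these so that the iteration can be run $2k$ times.

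Combining with $\binom{n}{k}\le(en/k)^k$ and $k\ge 2\eps d$ yields
$$\binom{n}{k}(Cd/n)^{2k}\le (C' d/(\eps n))^{2\eps d}\le (ed/(\eps n))^{\eps d},$$
where the final inequality follows from $d\le\eps n/6$ by absorbing the constant into one factor of $d/n$.
The main obstacle of this plan is the quantitative switching estimate itself: one must track forward and backward
switching multiplicities uniformly over the starting matrix, and handle the simultaneous constraints on both rows
$i$ and $j$. The condition $d\le\eps n/6$ is essential to keep the forward/backward ratio bounded by $O(d/n)$, and
consequently to match the $(d/n)^{2k}$ heuristic expected from the Erd\H{o}s--R\'enyi comparison.
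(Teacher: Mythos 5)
The paper does not prove this proposition: it is imported as Proposition~3.3 from \cite{LLTTY:15}, so there is no in-paper argument to compare against. Your plan --- rewrite the event as $|\supp R_i\cap\supp R_j|\le 2\eps d$ (for $i\ne j$, since $|\supp(R_i+R_j)|=2d-|\supp R_i\cap\supp R_j|$), union bound over $\le n^2$ ordered row/column pairs using transposition invariance, a second union bound over candidate sets $K$, and a switching estimate for the per-$K$ probability --- is the natural approach and matches the spirit of the cited proof.

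That said, two points leave a genuine gap. Your final arithmetic requires the switching constant $C$ in $\mathbb{P}[K\subset\supp R_i\cap\supp R_j]\le(Cd/n)^{2k}$ to be quite tight: tracking $\binom{n}{k}(Cd/n)^{2k}\le(en/k)^k(Cd/n)^{2k}=(eC^2d^2/(kn))^k$ against the target $(ed/(\eps n))^{\eps d}$ under $d\le\eps n/6$ and $k\ge 2\eps d$ shows you need roughly $C\le 1.7$. This does hold, but only because of an observation you do not make: since $|\supp R_i\cap\supp R_j|\le d$ unconditionally, the event is empty once $2\eps d\ge d$, so the proposition is vacuous for $\eps\ge 1/2$; for $\eps<1/2$ one has $k\le d\le n/12$, hence $n-2k\ge 5n/6$, and the natural switching bound then produces $C=1+O(d/n)$ well inside the required range. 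Without this reduction the constants do not obviously close, and your final sentence (``absorbing the constant into one factor of $d/n$'') glosses over exactly this. Second, the switching estimate itself is only sketched; the forward/backward multiplicity count must be carried out while respecting the constraints that rows $i$ and $j$ and the already-forced positions in $\{i,j\}\times K$ are not disturbed, and that is precisely where the $n-2k\ge 5n/6$ slack gets consumed. So: right approach, but with two concrete loose ends --- the explicit constant in the switching step, and the reduction to $\eps<1/2$ --- that need to be filled in before this is a complete proof.
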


\smallskip

Given $0\leq \alpha,\beta\leq 1$, denote by  $\Om0(\alpha,\beta)$ the set of matrices in $\Mc$ having a
zero submatrix of size at least $\alpha n\times \beta n$, that is
\begin{align*}
\Om0(\alpha,\beta):=\{M\in\Mc\,\, :\, \, &\exists I,J\subset[n]\quad\text{such that}\quad
                     |I|\geq \alpha n,\,|J|\geq \beta n, \notag
\\
&\text{ and }\quad \forall i\in I\, \forall j\in J\quad \mu_{ij}=0\}.
\end{align*}

The next result is   Theorem~3.4  from \cite{LLTTY:15} (note that
the condition $\beta \leq 1/4$ there  can be removed by adjusting absolute constants).

\begin{prop}[No large zero submatrices]
\label{p:zero minor}
There exist absolute positive constants $c,  C$
such that the following holds. Let $2\leq d\leq  n/24$,  $0<\beta\leq 1$, and
 $0<\alpha\leq \min(\beta, 1/4)$.  Assume that
$$
\alpha \geq  \frac{C  \log (e/\beta)}{d}.
$$
Then
$$
\mathbb{P}\left(\Om0(\alpha,\beta)\right) \leq
\exp\left(-c \alpha \beta dn\right).
$$
\end{prop}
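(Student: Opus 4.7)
The plan is to combine a union bound over the sets $I$ and $J$ with a switching-based estimate for the probability that a fixed $a\times b$ submatrix is zero, where $a=\lceil\alpha n\rceil$ and $b=\lceil\beta n\rceil$. By symmetry of the uniform measure on $\Mc$ and the estimate $\binom{n}{a}\binom{n}{b}\le(e/\alpha)^{\alpha n}(e/\beta)^{\beta n}$, letting $p_{a,b}:=\mathbb{P}(\mu_{ij}=0\text{ for all }(i,j)\in I_0\times J_0)$ for any fixed $I_0,J_0$ of sizes $a,b$,
\begin{equation*}
\mathbb{P}(\Om0(\alpha,\beta))\le(e/\alpha)^{\alpha n}(e/\beta)^{\beta n}\cdot p_{a,b}.
\end{equation*}
The task reduces to showing $p_{a,b}\le\exp(-c_1\alpha\beta dn)$ for an absolute constant $c_1>0$.

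To estimate $p_{a,b}$, fix $I=I_0$, $J=J_0$ and for $k\ge 0$ let $\mathcal{N}_k\subset\Mc$ denote the set of matrices with exactly $k$ ones in $I\times J$, so that $p_{a,b}=|\mathcal{N}_0|/|\Mc|$. I would use the standard switching which, starting from $M\in\mathcal{N}_k$, selects a quadruple $(i,j,i',j')$ with $(i,j)\in I\times J$, $(i',j')\in([n]\setminus I)\times([n]\setminus J)$, $\mu_{ij}=\mu_{i'j'}=0$, $\mu_{ij'}=\mu_{i'j}=1$, and swaps the four entries to land in $\mathcal{N}_{k+1}$. The marginals $\sum_{i\in I}|\{j'\in[n]\setminus J:\mu_{ij'}=1\}|=ad-k$ and its column analogue give a forward count at least $(ad-k)(bd-k)-kd^2-R_k(M)$, where $R_k(M)$ is the number of rejected quadruples due to $\mu_{i'j'}=1$. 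A backward count at $M'\in\mathcal{N}_{k+1}$ is bounded by $(k+1)\cdot(\text{number of ones in }I^c\times J^c)\le(k+1)nd$. Double counting yields $|\mathcal{N}_{k+1}|/|\mathcal{N}_k|\gtrsim abd/((k+1)n)$ provided $R_k(M)$ is suitably small, so the ratio exceeds $2$ throughout $0\le k\le K:=\lfloor c_2 abd/n\rfloor$. Iterating, $p_{a,b}\le 2^{-K}\le\exp(-c_1\alpha\beta dn)$.

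Combining the two estimates,
\begin{equation*}
\mathbb{P}(\Om0(\alpha,\beta))\le\exp\bigl(\alpha n\log(e/\alpha)+\beta n\log(e/\beta)-c_1\alpha\beta dn\bigr).
\end{equation*}
Since $x\mapsto x\log(e/x)$ is non-decreasing on $(0,1]$ and $\alpha\le\beta$, we have $\alpha\log(e/\alpha)\le\beta\log(e/\beta)$, hence the exponent is at most $2\beta n\log(e/\beta)-c_1\alpha\beta dn$. The hypothesis $\alpha\ge C\log(e/\beta)/d$ with $C$ chosen so that $Cc_1\ge 4$ absorbs the first term into half of the second, yielding $\mathbb{P}(\Om0(\alpha,\beta))\le\exp(-(c_1/2)\alpha\beta dn)$.

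The main obstacle is the rigorous control of the rejection term $R_k(M)$ in the forward-switching count. Crude upper bounds such as $R_k(M)\le(ad-k)(bd-k)$ or $R_k(M)\le nd^3$ are of the same order as the main term $abd^2$ and would break the iteration; a more delicate argument—likely a Cauchy--Schwarz estimate on the row/column statistics exploiting $d\le n/24$, or a multi-step switching with intermediate states—is required to reduce $R_k(M)$ to a genuinely smaller quantity. An alternative, possibly cleaner route is a contiguity argument comparing $\Mc$ with the bipartite configuration model, where the analogous bound $\prod_{k=0}^{ad-1}((n-b)d-k)/(nd-k)\le e^{-abd/n}$ follows by direct computation, but transferring the estimate back to $\Mc$ requires care for large $d$.
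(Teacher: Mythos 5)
The paper does not actually prove Proposition~\ref{p:zero minor}: it is cited verbatim as Theorem~3.4 of \cite{LLTTY:15} (with a remark that the original hypothesis $\beta\le 1/4$ can be dropped by adjusting constants). So there is no internal proof to compare against, and your attempt must stand on its own.

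Your global strategy (union bound over index pairs $(I,J)$ plus a switching estimate for a fixed zero submatrix) is the natural one and is almost certainly the route taken in \cite{LLTTY:15}. The union bound, the monotonicity of $x\mapsto x\log(e/x)$ used to absorb both entropy terms into $\beta n\log(e/\beta)$, and the final absorption into $c_1\alpha\beta dn$ via the hypothesis $\alpha\ge C\log(e/\beta)/d$ are all correct.

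The gap you flag is genuine, and in fact it is worse than you suggest. The rejection term in the forward count is
$R_k(M)=\sum_{(i',j')\in I^c\times J^c,\ \mu_{i'j'}=1} c_I(j')\,r_J(i')$, where $c_I(j')$ is the number of ones of column $j'$ in rows $I$ and $r_J(i')$ is the number of ones of row $i'$ in columns $J$. The best uniform bound available by elementary means is
$R_k(M)\le d^2\,\min(ad-k,\,bd-k)\le \min(a,b)\,d^3$,
obtained by first bounding one of $c_I,r_J$ by $d$ and then summing the other. Comparing with the main term $(ad-k)(bd-k)\approx abd^2$, the ratio is $d/\max(a,b)=d/(\beta n)$. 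From the hypothesis $\alpha\ge C\log(e/\beta)/d$ and $\alpha\le\beta$ one only gets $\beta n\ge Cn/d$, so $d/(\beta n)\le d^2/(Cn)$, and this is small only when $d\lesssim\sqrt n$. In the complementary regime (e.g.\ $d\sim n/24$ with $\beta n$ a large constant), $R_k$ can in principle be comparable to the main term, and there are adversarial configurations of $M$ realizing this, so no pointwise lower bound on the forward count of the form $\gtrsim abd^2$ can hold for \emph{all} $M\in\mathcal N_k$. Consequently Claim~\ref{f} (which needs a uniform lower bound on $|R(a)|$ over $a\in A$) cannot be applied as stated. A correct argument must either (i) restrict to a high-probability subset of $\mathcal N_k$ on which $R_k(M)$ is provably small and show the complement is negligible — a standard but nontrivial refinement of the switching method — or (ii) use a structurally different switching (e.g.\ moving several entries at once, or switching within a conditioned ensemble) that kills the bad quadruples by design. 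The contiguity route via the bipartite configuration model, which you also mention, gives the clean per-column factor $(1-\beta)^{ad}\le e^{-\alpha\beta dn}$ but fails here because the simplicity probability decays super-exponentially in $d$ once $d\gg\sqrt n$, which is allowed under $d\le n/24$. So the essential missing step is a rigorous control of the switching rejection term across the whole range $\sqrt n\lesssim d\le n/24$.
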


\bigskip

We now discuss another property of matrices in $\Omega_{m,\eps}$.
We start with the following construction.
Given two disjoint sets $J^\ell$, $J^r\subset[n]$
 and a matrix $M\in \Mc$, denote
$$
\il=\il(M, J^\ell, J^r):=\{i\le n :\,|\supp R_i\cap J^\ell|=1 \, \, \text{ and }\, \,\supp R_i\cap J^r=\emptyset\}
$$
and
$$
 \ir=\ir(M, J^\ell, J^r):=\{i\le n :\,\supp R_i\cap J^\ell=\emptyset\,\,\text{ and }\,\,|\supp R_i\cap J^r|=1\}.
$$
The sets $J^\ell$, $J^r$ will always be clear from the context.
The upper indexes $\ell$ and $r$ refer to {\it left} and {\it right}, since later, given
a vector $x\in \R^n$ with $x_1\geq x_2\geq \ldots \geq x_n\geq 0$, we choose $J^\ell=[k_1]$ and
$J^r=[k_2, n]$ for some $k_1<k_2$ (this is the reason why the above formulas for $\il(M), \ir(M)$ are
asymmetric).

\begin{lemma}\label{c:SJ}
Let $p\ge 2$, $m\geq 1$ be  integers satisfying $pm \leq c_{\ref{graph th known}} \eps n/d$
and let  $J^\ell, J^r\subset[n]$ be such that
$J^\ell\cap J^r=\emptyset$, $|J^\ell|=m$, $|J^r|=(p-1)m$. Let $M\in \Omega_{pm, \eps}$.
Then
$$
|S_{J^\ell}\setminus S_{J^r}|\ge(1-\eps p)d|J^\ell|  \quad \mbox{ and }  \quad
|\il|\ge(1-2\eps p)d|J^\ell|,
$$
where $S_{J^\ell},S_{J^r}$ are defined by \eqref{SJ}.
In particular, if $|J^r|=|J^\ell|=m$  then
$$
   (1-4\eps) dm \leq \min(|\il|, \, |\ir|) \leq \max (|\il|, \, |\ir|) \leq dm .
$$
\end{lemma}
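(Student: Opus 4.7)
My plan is to prove both estimates by elementary inclusion--exclusion combined with a double count of row/column incidences; the only non-trivial input is the defining property of $\Omega_{pm,\eps}$, applied not to $J^\ell$ or $J^r$ separately but to their union $J:=J^\ell\cup J^r$. This union has cardinality $pm\le c_{\ref{graph th known}}\eps n/d$, so the hypothesis $M\in\Omega_{pm,\eps}$ is meaningfully used.

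For the first inequality, note that $J^\ell\cap J^r=\emptyset$ gives $S_{J^\ell\cup J^r}=S_{J^\ell}\cup S_{J^r}$ directly from the definition \eqref{SJ}, so
$|S_{J^\ell}\setminus S_{J^r}|=|S_{J^\ell\cup J^r}|-|S_{J^r}|.$
I would then plug in $|S_{J^\ell\cup J^r}|\ge(1-\eps)dpm$ coming from $M\in\Omega_{pm,\eps}$, together with the trivial upper bound $|S_{J^r}|\le d|J^r|=d(p-1)m$ (each of the $(p-1)m$ columns contributes exactly $d$ rows), to obtain $(1-\eps)dpm-d(p-1)m=(1-\eps p)dm$.

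For the second inequality, I would partition $S_{J^\ell}\setminus S_{J^r}$ into $\il$ and the set of rows that avoid $J^r$ but meet $J^\ell$ in at least two columns; denote the size of this second piece by $A$. A double count of incidences between rows and columns of $J^\ell$ gives
$dm=\sum_{j\in J^\ell}|\supp X_j(M)|=\sum_{i\le n}|\supp R_i(M)\cap J^\ell|\ge |S_{J^\ell}|+A,$
since every row in $S_{J^\ell}$ contributes at least one incidence and each of the $A$ ``heavy'' rows contributes at least one extra. Combined with $|S_{J^\ell}|\ge(1-\eps)dm$ from $\Omega_{pm,\eps}$, this yields $A\le \eps dm$, and substituting into the already proved first bound gives $|\il|\ge(1-\eps p)dm-\eps dm=(1-\eps(p+1))dm\ge(1-2\eps p)dm$, where the last step uses $p\ge 2$.

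For the ``in particular'' clause (the symmetric case $p=2$, $|J^r|=|J^\ell|=m$), the upper bound $|\il|,|\ir|\le dm$ is immediate from the same incidence identity, and the lower bound on $|\ir|$ follows from the second inequality with the roles of $J^\ell$ and $J^r$ interchanged, producing $(1-2\eps\cdot 2)d|J^r|=(1-4\eps)dm$. I do not anticipate a genuine obstacle here -- the entire argument is careful bookkeeping; the only point I would double-check is that $\Omega_{pm,\eps}$ is applied to the full union so that the subtraction $(1-\eps)dpm-d(p-1)m$ collapses to precisely $(1-\eps p)dm$, and that the loss of $\eps dm$ in the second step is absorbed via $p\ge 2$ rather than requiring a larger lower bound on $p$.
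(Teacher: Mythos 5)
Your first inequality is proved exactly as in the paper: express $|S_{J^\ell}\setminus S_{J^r}|$ as $|S_{J^\ell}\cup S_{J^r}|-|S_{J^r}|$, apply $\Omega_{pm,\eps}$ to the union $J^\ell\cup J^r$ of cardinality $pm$, and use the trivial bound $|S_{J^r}|\le d(p-1)m$. The second inequality is also the same idea as the paper (a double count of incidences with the $J^\ell$ columns, paying one extra unit per ``heavy'' row), but there is one genuine error in your justification.

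You claim ``$|S_{J^\ell}|\ge(1-\eps)dm$ from $\Omega_{pm,\eps}$.'' This does not follow. The event $\Omega_{pm,\eps}$ controls only sets $J$ of cardinality exactly $pm$: it says $|S_J|\ge(1-\eps)dpm$ for such $J$. It gives no direct statement about $J^\ell$, which has cardinality $m<pm$. What you can legitimately deduce for $J^\ell$ is the weaker $|S_{J^\ell}|\ge(1-\eps p)dm$: either extend $J^\ell$ to a set $J$ of size $pm$ and subtract off at most $d(p-1)m$ rows contributed by $J\setminus J^\ell$, or simply note $|S_{J^\ell}|\ge|S_{J^\ell}\setminus S_{J^r}|\ge(1-\eps p)dm$, the first inequality you already proved. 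With this corrected input your double count gives $A\le dm-(1-\eps p)dm=\eps p\,dm$ rather than $\eps dm$, and then
$$|\il|\ge|S_{J^\ell}\setminus S_{J^r}|-A\ge(1-\eps p)dm-\eps p\,dm=(1-2\eps p)dm,$$
which is exactly the stated bound and does not even require the step ``$p\ge2$ absorbs the loss.'' So the conclusion is fine and the architecture of your argument is sound; the specific intermediate bound you wrote down is simply not justified by the hypothesis, and as a bonus the patched version is tighter (it recovers the paper's $(1-2\eps p)$ with equality, where your version would deliver $(1-\eps(p+1))$). For comparison, the paper runs the same count but restricted to the minor $(S_{J^\ell}\setminus S_{J^r})\times J^\ell$, which sidesteps the need for any separate lower bound on $|S_{J^\ell}|$. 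The ``in particular'' clause in your write-up is correct: the upper bound follows from $\il\subset S_{J^\ell}$ and $|S_{J^\ell}|\le dm$, and the bound on $|\ir|$ from swapping roles.
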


\begin{proof}
Since $M\in\Omega_{pm, \eps}$, we observe that $|S_{J^\ell}\cup S_{J^r}|\ge(1-\eps)pd|{J^\ell}|$. Hence,
$$
 |S_{J^\ell}\setminus S_{J^r}|=|S_{J^\ell}\cup S_{J^r}|-|S_{J^r}|\ge(1-\eps)pd|{J^\ell}|-(p-1)d|{J^\ell}|
 =(1-\eps p)d|{J^\ell}|,
$$
which proves the first estimate. To prove the second one, set
$$
  k:=|\{i\in S_{J^\ell}\setminus S_{J^r}\,:\,|\supp R_i\cap {J^\ell}|=1\}|.
$$
Then the number of ones in the submatrix
$$
 \{\mu_{ij}\,:\, i\in S_{J^\ell}\setminus S_{J^r},\, j\in {J^\ell}\}
$$
is at least
$$
   k + 2(|S_{J^\ell}\setminus S_{J^r}|-k)\ge 2(1-\eps p)d|{J^\ell}|- k .
$$
On the other hand, it cannot exceed $|{J^\ell}|d$. Therefore
$$
  k \ge 2(1-\eps p)d|{J^\ell}|-d|{J^\ell}|=(1-2\eps p)d|{J^\ell}|.
$$
This completes the first part of the lemma. The second one follows by applying
these estimates with $p=2$, using that the roles of $\il$ and $\ir$ are interchangable
and that each row contains exactly $d$ ones.
\end{proof}

We would like to mention that the use of events like $\Omega_{k, \eps}$,
$\Omega_1(\eps)$,
$\Om0(\alpha,\beta)$
for the invertibility problems
for $d$-regular matrices goes back to \cite{Cook-digraphs, Cook-adjacency}, which contained weaker versions of our
Theorem~\ref{graph th known} and Propositions~\ref{p:disjoint-rows} and \ref{p:zero minor}.


\section{Almost constant vectors}
\label{steep}

In this section we treat almost constant vectors, which we split into almost constant gradual
vectors (i.e., vectors with many coordinates almost equal to each other and without jumps)
and almost constant steep vectors (i.e., almost constant vectors with jumps).
First, in Theorem~\ref{lem: a-c}, we prove a bound for almost constant gradual
vectors. This case is less involved and was discussed in the introduction. Then we turn to steep vectors.
Recall that steep vectors possess a significant jump, where by {\it a jump} we mean $x_k^*\gg x_m^*$ for some $k\ll m$.
We split a vector in pieces and check if a jump occurs inside those pieces. We distinguish three types of steep vectors,
$\st_0$, $\st_1$, $\st_2$, according to the place where the first jump occurs -- we introduce parameters
$1\leq n_1< n_2<n_3<n$ and $\st_0$ (which is empty if $n_1=1$) corresponds to the case $1\leq k < m \leq n_1$,
$\st_1$ corresponds to $n_1\leq k < m \leq n_2$, and $\st_2$ corresponds to the case $n_2\leq k < m \leq n_3$
(see precise definitions below).

 When the first jump occurs at the beginning of the sequence $(x_i^*)$, that is for
 vectors in $\st_0$, we force the bound by a large jump only,
so the proof in this case is more deterministic and does not require
an approximation -- for every ``good'' matrix we have a good uniform
bound on vectors having a large jump. More precisely, for such vectors
we use properties of $d$-regular graphs and their adjacency matrices,
which we obtained in \cite{LLTTY:15}. Using these properties, we prove
that with high probability a random $d$-regular matrix has many rows
with only one $1$ in columns corresponding to the first $k$
coordinates and no other ones till the $m$-th column (see Lemma~\ref{c:SJ}). Thus, the inner
product of such a row with $x$ can be bounded as difference of the
absolute value of one ``large'' coordinate and the sum of absolute
values of $d-1$ ``small'' coordinates. Therefore, if we have a jump of
order, say, $4d$, this inner product is separated from zero.
This works when $m/k\lesssim 1/\eps _0 = \sqrt{d/\log d}$. The use of Lemma~\ref{c:SJ} leads to the
restriction $n_1\leq \eps_0 n/d\approx n/d^{3/2}$ (in fact, to have better bounds, we choose $n_1$
even smaller -- of order $n/d^2$). This scheme works for all vectors in $\st_0$ -- we don't
need to assume that vectors are almost constant.

If the first jump occurs later, i.e. for vectors in $\st _1$ and $\st_2$,
the main idea is to use the
union bound, that is, to estimate the probability for an individual
vector with a jump, to construct a good $\eps$-net for such vectors,
and to approximate each such vector by a vector from the
$\eps$-net. The fact that the operator norm of our matrices is $d$  and
our choice of $n_1$ lead to the choice of $\eps =1/d^{3/2}$ for $\eps$-nets
(we need to have a  negative power of $d$).
In this scheme the most important is to have the ``right''
balance between the size of the net and the individual probability
bound. For individual probability bounds we use anti-concentration
type technique together with switching argument, standard in dealing
with $d$-regular graphs. Jumps are needed to apply anti-concentration
and to show that, for a fixed vector $x$ and a fixed index $i$,
matrices having small inner product of $i$-th row with $x$ belong to a
certain class, to which we can apply the switching argument. For this
argument a constant jump, that is $x_k^* > 4 x_m^*$, would be
enough.  Note that the smaller the jump and the larger the ratio $m/k$
the better for us, since we need to have a control of the ratio
$x_1^*/x_m^*$, which is responsible for both, for the final bound on
the singular value and for the size of the net. Note also that
contrary to results for matrices with i.i.d. entries we have to employ
anti-concentration inequalities already for these vectors of
relatively small complexity. Nets will be constructed in $\ell_\infty$
metric fixing $x_k^*=1$ (with $k=n_1$ or $n_2$) in order to control values
of each coordinate indexed between $k$ and $m$.  To have a reasonable size of the net, we
also work with pieces of a vector and approximate each piece
separately. This delicate construction allows us to significantly
decrease the size of the net (in comparison with the standard
constructions). Unfortunately, the size of the net is still quite
large and requires additional restrictions. First, it works only when
$m/k\lesssim d/\log d$, that is we must have both $n_3/n_2$ and $n_2/n_1$
to be at most $d/\log d$. Moreover, since in the individual bounds
Lemma~\ref{c:SJ} is again involved (with a different choice of parameters),
we have an additional restriction $n_2 \lesssim n/d$. This explains our choice
of $n_2\approx n/d$ and hence $n_1\approx n (\log d)/d^2$ and
$n_3\approx n/\log d$. Second, in the case
$n_1\lesssim k<m \lesssim n_2$, to kill a large part of
coordinates (in order to decrease the size of the net) we need a jump
of order $1/\eps = d^{3/2}$. This will lead to the definition of $\st_1$.
Note that again our proof works for vectors from $\st_1$ without an 
additional assumption that vectors are almost constant.

For the part of coordinates with $k \approx n/d$
and $m \approx n/\log d$, corresponding to the definition of $\st_2$,
due to the method used in the proof of Theorem~\ref{lem: a-c}, we cannot
 use a large jump and has to deal with a constant jump. With such a small jump,
without additional restrictions,
the size of the net would be too large to be ``killed" by individual probabilities
bounds.  To overcome this issue, we intersect steep vectors from $\st_3$ with almost constant vectors. This
significantely reduces the ``dimension'' of vectors
(making them essentially one-dimensional on the set of coordinates
corresponding to the ``almost constant part") and allows good
bounds on the size of the net even with a constant jump.

\subsection{Almost  constant, steep, and gradual  vectors: definitions and main results}

\bigskip

To define almost constant and steep vectors we will use the following parameters.
In order to use Theorem~\ref{graph th known}, we fix $\eps_0$ and a related parameter
$p$ as follows:
$$
\eps _0 = \sqrt{(\log d)/{d}}, \quad \quad  p= \lfloor 1/(5 \eps_0) \rfloor=\left\lfloor \tfrac{1}{5} \sqrt{d/\log d}\r\rfloor
$$
(the choice of $p$ comes from $\eps _0 p < 1$ needed in Lemma~\ref{l:T0} in order to apply  Lemma~\ref{c:SJ}).
We also fix three absolute positive sufficiently small constants $\a$, $\aa$, and $\aaa$, satisfying
\begin{equation} \label{new-par-aa}
\aaa \leq \aa /28\leq \a/28^2,
\end{equation}
(we don't try to estimate the actual values of $a_i$'s, the conditions on how small they are will be appearing
in the corresponding proofs).
Set
$$
  n_0:=\lceil \a n \log d/d^2 \rceil, \quad n_2 :=\lfloor \aa n/d \rfloor, \quad \mbox{ and } \quad \nn :=\lfloor \aaa n/\log d \rfloor.
$$
If $n_0=1$, set $n_1=1$. Otherwise,
fix an integer $r\geq 0$ such that $p^{r}<n_0\leq p^{r+1}$ and set
$$
 n_1=\left\{
  \begin{array}{cc}
   n_0, & \mbox{ if }\, n_0\leq p,\\
   p^{r+1}, & \text{otherwise}.
  \end{array}
     \right.
$$
Note that
\begin{equation}\label{new-par-nn}
  \frac{n_2}{n_1}\leq \frac{\aa d}{\a \log d}, \quad \quad
  \frac{n}{n_2}\leq \frac{2 d}{\aa},\quad \quad
  \frac{n}{n_3}\leq \frac{2 \log d}{\aaa}, \quad \quad
  \frac{n_3}{n_1}\leq \frac{\aaa d^2}{\a \log^2 d}, \quad \quad
\end{equation}
and, in the case  $n_0>1$,
\begin{equation}\label{new-par-nnnn}
 n_1\leq pn_0\leq \a \eps_0 n /5d.
\end{equation}

\smallskip

We are ready now to describe our classes.
First, given $\rho>0$,  we introduce a class of almost constant vectors by
$$
  \BB(\rho) :=\{ x\in \C^n\, : \, \exists \lambda \in \C \,  \mbox{ such that }\,
   |\{ i\leq n \, : \, |x_i - \lambda|\leq \rho\, \|x\|_2\}| >n- \nn\}.
$$

\medskip

The definition of the class of steep vectors is
 more involved and
consists of few steps at which we define sets $\st_0$, $\st_1$, and  $\st_2$. We start with $\st_0$.
If $n_0=n_1=1$  we set $\st _0 =\emptyset$.
 If $n_0>1$, we denote
$$
  \st_{0, 0}:= \{x\in \C^n \,:\,  x_{1}^{*}> 4 d x_{m}^{*}\},
$$
where $m=\min(n_0, p)$. In the case $1<n_0=n_1\leq p$ we set $\st_0= \st_{0, 0}$.
Otherwise,
if  $n_0>p$  we set for $1\leq i\leq r$
$$
  \st_{0, i}:=\{x\in \C^n\,:\,  x\not\in \bigcup_{j=0}^{i-1} \st_{0, j}  \,\, \mbox{ and } \, \,  x_{p^{i}}^{*}> 4 d x_{p^{i+1}}^{*}\}
  \quad \mbox{ and } \quad
  \st_0 =\bigcup _{i=0}^{r}\st _{0, i}.
$$
Finally, we define two more sets of  steep vectors, as
$$
  \st _1 : =\{x\in \C^n\,:\,  x\not\in  \st_{0}
  \,\, \mbox{ and } \, \,  x_{n_{1}}^{*}> d^{3/2}\, x_{n_2}^{*}\}
$$
and
$$
  \st _2 : =\{x\in \C^n\,:\,  x\not\in  \st_{0}\cup \st_1
  \,\, \mbox{ and } \, \,  x_{n_{2}}^{*}> 4\, x_{\nn}^{*}\} .
$$
The vectors from $\st_1\cup \st_2\cup \st_3$ we call steep and all other vectors we call gradual.

\smallskip

We introduce the following functions $h_i$, $0\leq i\leq r+1$,
$$
   \hh_{r+1}
   := \left\{
	\begin{array}{ll}
		  \sqrt{3 p}\,  n_1^{ 2+\alpha_d}
       & \mbox{ if } n_0> p, \\
		 2 d^{3/2}/\sqrt{\log d}  & \mbox{ if }  1<n_0\leq p,
\\
         \sqrt{n}  & \mbox{ if }  n_0=1,
	\end{array}
 \right.
\quad
 \hh_i :=
 \left\{
	\begin{array}{ll}
		 \sqrt{n}        & \mbox{ if } i=0, \\
		 \sqrt{n} + \sqrt{2p}\,  p^{ i(2+ \alpha_d)}  & \mbox{ if }  1\leq i\leq r,
	\end{array}
 \right.
$$
where $\alpha_d=\log 4d/\log p -2$ (note $2\log \log d/\log d\leq \alpha_d \leq 4\log \log d/\log d$ for
large $d$). We also denote
$$
 \bb:=
 \left\{
	\begin{array}{ll}
		 4d^{3/2} \hh_{r+1}       & \mbox{ if } \, \, n_0>1, \\
		 d\sqrt{n}  & \mbox{ if }\, \, n_0=1.
	\end{array}
 \right.
$$

\medskip

In this section we prove  two following theorems. The first one treats almost constant gradual vectors,
the second one treats almost constant sleep vectors (in fact, a  slightly larger class).

\begin{theor}\label{lem: a-c}
Let $d\leq n$ be large enough integers.
Let $0<\rho \leq 1 /(5\, \bb)$ and let $x\in \BB(\rho)\setminus (\st _{0}\cup \st_1\cup \st_2 )$.
Then
for every $M\in \Mc$ and for every $\ww\in \C$ with $|\ww |\leq d/6$ one has
$$
  \| (M - \ww \idmat) x\|_2 \geq \frac{d \sqrt{3n}}{5\, \bb}\,  \, \|x\|_2.
$$
\end{theor}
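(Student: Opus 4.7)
The plan is to exploit the fact that $\mathbf{1}\in\C^n$ is a common eigenvector of $M$ and $M^T$ with eigenvalue $d$ (every row and every column of $M$ sums to $d$): this pins down the image of the almost-constant part of $x$, and the sloping conditions are then used to control the remaining error. First I pick $\lambda\in\C$ as guaranteed by $x\in\BB(\rho)$, set $T:=\{i\le n\,:\,|x_i-\lambda|>\rho\|x\|_2\}$ (so $|T|<\nn$) and write $x=\lambda\mathbf 1+y$. Since $M^T\mathbf 1=d\mathbf 1$, summing coordinates yields $\sum_j ((M-\ww\idmat)x)_j=(d-\ww)\sum_j x_j$, so Cauchy--Schwarz together with $|d-\ww|\ge 5d/6$ gives
\[
\|(M-\ww\idmat)x\|_2\ \ge\ \frac{|d-\ww|}{\sqrt n}\,\Bigl|\sum_{j=1}^{n}x_j\Bigr|\ \ge\ \frac{5d}{6\sqrt n}\,\Bigl|\sum_{j=1}^{n}x_j\Bigr|.
\]
Thus it suffices to show $|\sum_j x_j|\gtrsim n\|x\|_2/\bb$, which I plan to derive in two coupled pieces: an a priori lower bound $|\lambda|\gtrsim\|x\|_2/\bb$, and a matching upper bound on the deviation $|\sum_j y_j|$ that costs only a numerical fraction of $n|\lambda|$.

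The central technical ingredient is the $\ell_2$-bound
\[
\|x^*_{[1:\nn]}\|_2\ \le\ C_0\,\bb\,x^*_{\nn},
\]
with $C_0$ an absolute constant, obtained from $x\notin\st_0\cup\st_1\cup\st_2$ by splitting $[1,\nn]=[1,n_1]\cup(n_1,n_2]\cup(n_2,\nn]$. On the last two blocks I use directly $x^*_{n_2}\le 4x^*_{\nn}$ (from $x\notin\st_2$) and $x^*_{n_1}\le d^{3/2}x^*_{n_2}$ (from $x\notin\st_1$); on $[1,n_1]$, when $n_0>p$, I sum the dyadic cascade $x^*_{p^i}\le 4d\,x^*_{p^{i+1}}$ provided by $x\notin\st_0$:
\[
\sum_{k=1}^{n_1}(x^*_k)^2\ \lesssim\ (x^*_{n_1})^2\sum_{i=0}^{r}p^{i+1}(4d)^{2(r+1-i)}\ \lesssim\ p\,n_1^{2(2+\alpha_d)}(x^*_{n_1})^2,
\]
whose square root matches the definition of $h_{r+1}$. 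After multiplying by $4d^{3/2}$ (from $x^*_{n_1}\le 4d^{3/2}x^*_{\nn}$) and using the crude ratios $\sqrt{n_2}\,d^{3/2}\le\bb$ and $\sqrt{\nn}\le\bb$ (which follow from \eqref{new-par-nn}--\eqref{new-par-nnnn} once $\a,\aa$ are small enough), all three contributions collapse into $C_0\,\bb\,x^*_{\nn}$; the cases $1<n_0\le p$ and $n_0=1$ are handled analogously with the respective definitions of $h_{r+1}$ and of $\bb$.

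Next, the strict inequality $|T|<\nn$ combined with the definition of $\BB(\rho)$ forces $x^*_{\nn}\le|\lambda|+\rho\|x\|_2$. Substituting this into $\|x|_T\|_2\le\|x^*_{[1:\nn]}\|_2\le C_0\bb\,x^*_{\nn}$ and into
\[
\|x\|_2^2\ \le\ \|x|_T\|_2^2+n(|\lambda|+\rho\|x\|_2)^2\ \le\ \bigl((C_0\bb)^2+n\bigr)(|\lambda|+\rho\|x\|_2)^2,
\]
then absorbing all $\rho\|x\|_2$-terms by means of $\rho\le 1/(5\bb)$ and $\bb\ge\sqrt n$, I extract $|\lambda|\ge c\|x\|_2/\bb$ for an absolute $c>0$. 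Using the same pieces, the deviation is controlled by
\[
\Bigl|\sum_{j}y_j\Bigr|\ \le\ n\rho\|x\|_2+\sqrt{\nn}\,\|y|_T\|_2\ \le\ n\rho\|x\|_2+\sqrt{\nn}\bigl(\sqrt 2\,C_0\bb\,x^*_{\nn}+\sqrt{2\nn}\,|\lambda|\bigr),
\]
and a further substitution of $x^*_{\nn}\le|\lambda|+\rho\|x\|_2$ together with $\rho\le 1/(5\bb)$ shows this is at most a numerical constant fraction of $n|\lambda|$. Hence $|\sum_j x_j|\ge n|\lambda|-|\sum_j y_j|\gtrsim n\|x\|_2/\bb$, and plugging this into the Cauchy--Schwarz inequality of the first paragraph delivers $\|(M-\ww\idmat)x\|_2\ge d\sqrt{3n}/(5\bb)\cdot\|x\|_2$ after calibrating the absolute constants.

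The main obstacle is precisely this calibration: the bound $\|x^*_{[1:\nn]}\|_2\le C_0\bb\,x^*_{\nn}$ must be sharp enough that $C_0$ is small (forcing $\aa$ tiny) so that the a priori step recovers $|\lambda|\gtrsim\|x\|_2/\bb$, while at the same time the factor $\sqrt{\nn}\bb$ appearing in $|\sum_T y_j|$ must stay below a small fraction of $n|\lambda|$, which translates into an independent smallness requirement on $\aaa$ relative to $\aa$. The hierarchical constraint $\aaa\le \aa/28\le \a/28^2$ recorded in \eqref{new-par-aa} is exactly what makes these two error budgets compatible across the three regimes $n_0=1$, $1<n_0\le p$, $n_0>p$, and it is also what forces the geometric series in the dyadic cascade on $[1,n_1]$ to be dominated by its first term, matching $h_{r+1}^2$.
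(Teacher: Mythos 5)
Your proposal hinges entirely on the identity $\mathbf{1}^{T}(M-\ww\idmat)x = (d-\ww)\sum_j x_j$ and the resulting Cauchy--Schwarz bound $\|(M-\ww\idmat)x\|_2 \geq \frac{|d-\ww|}{\sqrt n}\bigl|\sum_j x_j\bigr|$. This reduces the problem to showing $\bigl|\sum_j x_j\bigr|\gtrsim n\|x\|_2/\bb$, and that lower bound is simply \emph{false} for some admissible $x$. Here is a concrete counterexample in the regime $n_0=1$ (so $\st_0=\emptyset$ and $\bb=d\sqrt n$): take $d$ large, set $|T|=\lceil n/(1+d^{3/2})\rceil$, and let $x$ be the real vector with $|T|$ coordinates equal to $-(n-|T|)/|T|\in[-d^{3/2},0)$ and the remaining $n-|T|$ coordinates equal to $1$. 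Then $\sum_j x_j=0$ by construction, yet $\|x\|_2\asymp d^{3/4}\sqrt n>0$, and one checks that $x\in\BB(\rho)\setminus(\st_0\cup\st_1\cup\st_2)$: indeed $x^*_1\leq d^{3/2}=d^{3/2}x^*_{n_2}$ (so $x\notin\st_1$), $x^*_{n_2}=x^*_{\nn}=1$ (so $x\notin\st_2$, using $|T|<n_2<\nn$ for $d$ large), and $n-|T|>n-\nn$ coordinates equal $\lambda=1$ (so $x\in\BB(\rho)$). For such $x$ your Cauchy--Schwarz step yields the vacuous bound $\|(M-\ww\idmat)x\|_2\geq 0$, whereas the theorem asserts $\|(M-\ww\idmat)x\|_2\geq(\sqrt3/5)\|x\|_2$.

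The deeper reason a single averaging functional cannot work is that the theorem is uniform over $M\in\Mc$ and over an exponentially large class of vectors $x$, while $\mathbf{1}^{T}(M-\ww\idmat)x$ captures only a one-dimensional projection of $(M-\ww\idmat)x$; vectors nearly orthogonal to $\mathbf{1}$ live in the almost-constant sloping class whenever the outlier set $T$ is small (which the definitions allow). The paper instead finds a set $I$ of $\geq 3n/4$ rows for which $|\langle R_i(M-\ww\idmat),x^\dagger\rangle|\geq d|\lambda_0|/2$ \emph{individually} -- because such rows hit at least $d-O(\nn d/n)$ coordinates close to $\lambda_0$ and subtract only a controlled shift -- and then sums these lower bounds in $\ell_2$, which does not permit cancellation. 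Your secondary calibration argument also exhibits the same defect independently of the counterexample: the deviation term $\sqrt{\nn}\,\bb\,|\lambda|$ scales like $dn|\lambda|/\sqrt{\log d}$ when $\bb=d\sqrt n$, which exceeds $n|\lambda|$ by a factor of order $d/\sqrt{\log d}$; this cannot be fixed by any hierarchy of fixed small constants $\a,\aa,\aaa$ since the offending factor grows with $d$. To prove the theorem you need the row-by-row (or equivalently, support-structure) argument, not a single global linear functional.
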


\begin{theor} \label{t:steep}
There are  absolute constants $C>1>c, c_1>0$ such that the following holds.
Let $C<d<c_1n$ and $0<\rho \leq 1/(d^{3/2}\, \bb)$. Let $\ww\in \C$ be such that $|\ww|\leq d$. Denote
$$
   \st = \st _0 \cup \st_1 \cup (\st_2 \cap \BB(\rho))
$$
and
$$
  \Event_{steep}:=\Big\{M\in\Mc\,:\,\exists\;x\in \st\, \, \, \mbox{ such that }
  \, \, \,
  \|(M-\ww \idmat) x\|_2 < \frac{ \sqrt{ n_2 d} }{25 \bb }\, \nx_2
  \Big\}.
$$
 Then
\begin{equation*}
\label{Psteep}
\Prob(\Event_{steep})\leq    n^{-c \min (\log n, \sqrt{d\log d})}.
\end{equation*}
\end{theor}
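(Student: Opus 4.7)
\noindent\textbf{Proof proposal for Theorem~\ref{t:steep}.}
The plan is to decompose $\st$ into its three natural pieces and union bound: $\Event_{steep}\subset\Event^{(0)}\cup\Event^{(1)}\cup\Event^{(2)}$, where $\Event^{(j)}$ is the event that the smallness of $\|(M-\ww\idmat)x\|_2$ is witnessed by some $x\in\st_0$, $\st_1$, or $\st_2\cap\BB(\rho)$, respectively. The strategy for each piece is dictated by the size of the guaranteed jump in $x^*$: it decreases from $4d$ in $\st_0$ (allowing an essentially deterministic treatment on a structural good event), to $d^{3/2}$ in $\st_1$ (forcing a delicate covering plus anti-concentration), and down to the constant $4$ in $\st_2$, where the almost-constant constraint $x\in\BB(\rho)$ compensates by reducing effective dimension.

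\medskip

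\noindent\textbf{Handling $\st_0$.} Write $\st_0=\bigcup_{i=0}^r\st_{0,i}$ and fix $i$. Let $\sig$ rearrange $|x|$ in non-increasing order and set $J^\ell=\sig([k])$, $J^r=\sig([k+1,m])$, where $(k,m)=(1,\min(n_0,p))$ for $i=0$ and $(k,m)=(p^i,p^{i+1})$ for $1\leq i\leq r$. In all cases $m\leq n_1\leq c_{\ref{graph th known}}\eps_0 n/d$ by (\ref{new-par-nnnn}), so Theorem~\ref{graph th known} puts us on $\Omega_{m,\eps_0}$ outside a set of negligible probability, and Lemma~\ref{c:SJ} then guarantees $|\il|\geq(1-2\eps_0 p)dk$. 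For any row index $i_0\in\il\setminus(J^\ell\cup J^r)$, exactly one entry of $R_{i_0}(M)$ sits in $J^\ell$ and the remaining $d-1$ entries sit outside $J^\ell\cup J^r$, so $|(Mx)_{i_0}|\geq x_{k}^*-(d-1)x_m^*$, while $|x_{i_0}|\leq x_m^*$ because $i_0\notin J^\ell\cup J^r$. The jump $x_k^*>4dx_m^*$ together with $|\ww|\leq d$ gives the deterministic entrywise bound $|((M-\ww\idmat)x)_{i_0}|>2dx_m^*$, hence
\[
  \|(M-\ww\idmat)x\|_2^2\;\geq\;\bigl((1-2\eps_0 p)dk-m\bigr)\cdot 4d^2 (x_m^*)^2.
\]
Since $x\notin\st_{0,j}$ for $j<i$, one has $x_1^*\leq(4d)^i x_{k}^*$, so $\|x\|_2\leq\sqrt{n}(4d)^i x_k^*$; combining with the previous inequality and the definitions of $\bb$ and $\hh_{r+1}$ delivers the claimed ratio, and a union bound over $0\leq i\leq r$ closes this piece.

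\medskip

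\noindent\textbf{Handling $\st_1$ and $\st_2\cap\BB(\rho)$.} For both ranges I would run a covering plus anti-concentration argument. After homogenizing ($x_{n_1}^*=1$ in $\st_1$; $x_{n_2}^*=1$ in $\st_2\cap\BB(\rho)$) the coordinates past the jump are controlled by $d^{-3/2}$ (resp.\ $1/4$), while the top block is controlled by $\hh_{r+1}$ since $x\notin\st_0$. I then build an $\ell_\infty$-net $\Net$ of mesh $\eta$ on the normalized set as the product of (a) a combinatorial factor selecting the locations of the large coordinates---sharply reduced in the $\st_2\cap\BB(\rho)$ case because $\BB(\rho)$ forces all but $n_3$ coordinates to lie within $\rho\|x\|_2$ of a single $\lambda\in\C$---(b) a multiplicative grid on the top block, and (c) an additive $\eta$-grid on the remaining coordinates. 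For any fixed $y\in\Net$, the good event supplied by Theorem~\ref{graph th known}, Proposition~\ref{p:disjoint-rows}, Proposition~\ref{p:zero minor}, and Lemma~\ref{c:SJ} guarantees $\Omega(dn_1)$ matrix rows whose supports meet the jump region in a single column. A switching of one entry of each such row inside the flat part of $y$ produces, conditionally, a sum of independent $\pm1$ Bernoulli combinations of $y$-coordinates whose amplitude is bounded below by the jump (using Lemma~\ref{l: quantified claim4.7} in the $\BB(\rho)$ case). Proposition~\ref{p:Erdos} then yields the per-row small-ball estimate $\Prob(|\langle R_i,y\rangle-\ww y_i|<\delta)\lesssim \delta/(\text{jump})$; approximate independence across rows multiplies these into $(C\delta/\text{jump})^{\Omega(dn_1)}$, which absorbs $|\Net|$ and the approximation error of passing from $y\in\Net$ back to arbitrary $x$.

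\medskip

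\noindent\textbf{Main obstacle.} The hardest step is the covering/anti-concentration balance for $\st_1$: the jump $d^{3/2}$ is modest compared to $n_1$, so $|\Net|$ carries a $\binom{n}{n_1}$-sized combinatorial factor together with essentially $(d^{3/2})^{n_2}$ from the additive grid, and only an anti-concentration decay of the form $(C/d^{3/2})^{\Omega(dn_1)}$ per fixed $y$ can offset it. Producing enough approximate independence across rows of $M$ inside the dependent model $\Mc$ is the real technical difficulty, and it is where Proposition~\ref{p:disjoint-rows} (letting the relevant rows be chosen with almost disjoint supports) and Proposition~\ref{p:zero minor} (ruling out pathological zero blocks that would block the switching) must combine with a pairwise switching argument to decouple the required inner products.
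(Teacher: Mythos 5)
Your decomposition into $\st_0$, $\st_1$, and $\st_2\cap\BB(\rho)$ matches the paper, and your treatment of $\st_0$ is essentially correct: it is the deterministic argument of Lemma~\ref{l:T0}, using Lemma~\ref{c:SJ} on the good event $\bigcap_j\Omega_{p^j,\eps_0}$ to produce $\Omega(d p^i)$ rows with a single large entry and then leveraging the $4d$-jump to beat the shift $\ww$. However, for $\st_1$ and $\st_2\cap\BB(\rho)$ the mechanism you propose diverges from the paper and contains a genuine gap.

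You propose to switch one entry of each row in the flat part of $y$, apply Proposition~\ref{p:Erdos} to get a per-row small-ball estimate $\lesssim\delta/\mathrm{jump}$, and then multiply across rows invoking ``approximate independence''. The problem is precisely the one you flag as ``the real technical difficulty'': within $\Mc$, the rows are not independent (column sums constrain them), and nothing in your sketch supplies the required decoupling. Proposition~\ref{p:Erdos} would also not give a useful per-row bound here — in the net the jump is normalized to be of constant or $d^{3/2}$ order, and a single-row Littlewood--Offord estimate yields only a constant saving, not enough to beat $|\Net|\approx\exp(dn_i/4)$ unless you have a rigorous product structure over $\Omega(dn_i)$ rows. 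The paper sidesteps this entirely. Lemma~\ref{individual} does \emph{not} use Proposition~\ref{p:Erdos}: it partitions $\Mc$ into classes $\f(I,V)$ (fixing the columns off $J=J^\ell\cup J^r$ and the set $I$ of rows with exactly one 1 in $J$), observes a dichotomy — if a row $i\in I$ has small inner product with $x$, then the matrix is forced into one of two disjoint subclasses $\f^\ell_I(i)$ or $\f^r_I(i)$ — and then bounds, via a global pairwise switching relation (Claim~\ref{cl-one}), the fraction of matrices in $\f(I,V)$ that lie in $\bigcap_{i\in B_0}\f^\ell_I(i)\cap\bigcap_{i\notin B_0}\f^r_I(i)$. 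This is a purely combinatorial counting argument; no per-row anti-concentration and no approximate independence across rows is needed. The gain per matrix is $e^{-t/3}$ with $t\approx 2md$, which comfortably absorbs both the net size and the union over choices of $B_0$.

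You also cite Lemma~\ref{l: quantified claim4.7} and Propositions~\ref{p:disjoint-rows}, \ref{p:zero minor} in this part, but these are used in Section~4 (the essentially non-constant vectors), not in the steep-vector argument; the steep-vector proof only needs Theorem~\ref{graph th known} and Lemma~\ref{c:SJ} for the good event $\Omega_0$. Finally, you do not explain why the $\BB(\rho)$ constraint in $\st_2$ lets the net cardinality collapse: the paper's Lemma~\ref{l:net} relies on the existence of $\lambda_0(x)$ close to all but $n_3$ coordinates, so that the bulk can be approximated by a single scalar on a grid of size $O(d^3)$ rather than a coordinate-wise grid of size $(d^{3/2})^{\Omega(n)}$. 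Without this observation, the net size would be too large.
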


\begin{rem}\label{cortwoth} In Section~\ref{s: quantitative} we will use these two theorems in the following way.
Let $\rho = 1/(d^{3/2}\, \bb)$, $|\ww |\leq d/6$,
$$
  \BB _0 (\rho) : = \BB (\rho) \cap \left\{x\in\C^n:\,\|x\|_2=1\r\},
$$
and
$$
  \Event :=\Big\{M\in\Mc\,:\,\exists\;x\in \BB _0 (\rho)\, \, \, \mbox{ such that }
  \, \, \,
  \|(M-\ww \idmat) x\|_2 < \rho^2/16
  \Big\}.
$$
Then Theorems~\ref{lem: a-c} and \ref{t:steep} imply that
$$
\Prob(\Event)\leq    n^{-c \min (\log n, \sqrt{d\log d})} \leq 1/2n^2.
$$
\end{rem}

\begin{rem}\label{unifbou}
Note that
$$
  \frac{d \sqrt{3n}}{5\, \bb}\geq  \frac{ \sqrt{ n_2 d} }{25 d^{3/2} \hh_{r+1}}.
$$
In the proof of Theorem~\ref{t:steep} we show also that
$$
  \frac{ \sqrt{ n_2 d} }{25 d^{3/2} \hh_{r+1}}\geq \hh(d, n),
$$
where
$$
 \hh(d, n) =
 \left\{
	\begin{array}{ll}
		 c d^{-3/2}      & \mbox{ if } n_1=1 \, \, \mbox{(that is, if }\, \a n \leq \frac{d^2}{\log d} \mbox{)}, \\
		  c\sqrt{n}\, d^{-3}(\log d)^{-1/2}  & \mbox{ if } 1< n_1\leq p \, \, \mbox{(that is, if }\, \frac{d^2}{\log d}<
\a n \leq \frac{d^{5/2}}{5\log^{3/2} d}\mbox{)},\\
          c d^{5/4} (\log d)^2 n^{-3/2-\alpha_d}  & \mbox{ if } n_1> p \, \, \mbox{(that is, if }\,
    \a n > \frac{d^{5/2}}{5\log^{3/2} d}\mbox{)}.
	\end{array}
 \right.
$$
\end{rem}

\smallskip

In the proof of both theorems we will use the
comparison of $\ell_2$-norm of a given vector with a fixed coordinate. The next lemma provides
such a bound in terms of the functions $h_i$. Moreover, we also estimate the $\ell_\infty$-norm.
Note that we clearly have $\|x\|_2\le \sqrt{n} \, x_{1}^{*}$ for every $x\in \C^n$.

\begin{lemma} \label{l:norma}
Let $d\leq n$ be large enough and $x\in \C^n$, $x\ne 0$.
If $x\in \st _{0,i}$ for some $0\leq i\leq r$, then
$$
    \|x\|_2 \le  \hh_i\, x_{p^{i}}^{*}.
$$
Moreover,
$$
 \|x\|_2 \le \left\{
\begin{array}{ll}
		  \hh_{r+1}\, x_{n_1}^{*}
       & \mbox{ if } x\notin \st _{0}, \\
		 (\bb/4)\, x_{n_2}^{*} & \mbox{ if }  x\notin \st _{0}\cup \st_1,
\\
         \bb\, x_{n_3}^{*} & \mbox{ if }  x\notin \st _{0}\cup \st_1\cup \st_2.
	\end{array}
\r.
$$
\end{lemma}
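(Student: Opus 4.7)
The plan is a direct verification, organized by the four bullets of the lemma, each reducing to a geometric decomposition of $[1,n]$ and a geometric series estimate coming from the chain of jumps implied by the hypothesis.

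First, for $x\in\st_{0,i}$, the case $i=0$ is immediate from $\|x\|_2\le\sqrt n\,x_1^*=h_0\,x_1^*$. For $1\le i\le r$ (which forces $n_0>p$, hence $m=\min(n_0,p)=p$ in the definition of $\st_{0,0}$), the hypothesis $x\notin\bigcup_{j<i}\st_{0,j}$ yields $x_{p^j}^*\le 4d\,x_{p^{j+1}}^*$ for $0\le j<i$ (with $p^0=1$). Iterating and using the identity $p^{2+\alpha_d}=4d$ (equivalent to the definition of $\alpha_d$) produces $x_{p^j}^*\le p^{(2+\alpha_d)(i-j)}x_{p^i}^*$ for $0\le j\le i$. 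I would then decompose $[1,p^i]=[1,p]\cup\bigcup_{j=1}^{i-1}[p^j+1,p^{j+1}]$, bound the $j$-th block by $p^{j+1}\cdot p^{2(2+\alpha_d)(i-j)}(x_{p^i}^*)^2$, and sum the resulting geometric series (with ratio $p^{-(3+2\alpha_d)}\le 1/8$ for $p\ge 2$) to obtain $\sum_{k=1}^{p^i}(x_k^*)^2\le 2p\,(p^{(2+\alpha_d)i})^2(x_{p^i}^*)^2$. Adding the trivial tail $\sum_{k>p^i}(x_k^*)^2\le n(x_{p^i}^*)^2$ and applying $\sqrt{A+B}\le\sqrt A+\sqrt B$ yields $\|x\|_2\le(\sqrt n+\sqrt{2p}\,p^{(2+\alpha_d)i})x_{p^i}^*=h_i\,x_{p^i}^*$.

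For the second bullet, the case $n_0=1$ is immediate. When $1<n_0\le p$ (so $r=0$, $n_1=n_0$) the crude estimate $\|x\|_2^2\le(16d^2n_1+n)(x_{n_1}^*)^2$, combined with $n_1\le p\lesssim\sqrt{d/\log d}$ and $n\lesssim d^{5/2}(a_1)^{-1}(\log d)^{-3/2}$ (both consequences of $n_0\le p$), is checked to be at most $h_{r+1}^2=4d^3/\log d$ for $d$ sufficiently large. For $n_0>p$, the same block decomposition (now covering $[1,n_1]$ by $r+1$ blocks) gives $\|x\|_2^2\le(n+2p\,n_1^{4+2\alpha_d})(x_{n_1}^*)^2$; upgrading to $h_{r+1}^2=3p\,n_1^{4+2\alpha_d}$ requires $n\le p\,n_1^{4+2\alpha_d}$. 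Plugging in $n_1\ge n_0\ge a_1 n\log d/d^2$ reduces this to $d^{\alpha_d}/(\log d)^{1+\alpha_d}\gtrsim 1/a_1$; since $\alpha_d\sim 2\log\log d/\log d$ gives $d^{\alpha_d}\sim(\log d)^2$, the inequality holds for $d$ large enough (relative to $1/a_1$).

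The third bullet combines the second with $x_{n_1}^*\le d^{3/2}x_{n_2}^*$ (from $x\notin\st_1$): for $n_0>1$, the identity $\bb/4=d^{3/2}h_{r+1}$ makes the claim immediate. For $n_0=1$ one argues directly by splitting $\|x\|_2^2\le n_2(x_1^*)^2+n(x_{n_2}^*)^2\le(n_2d^3+n)(x_{n_2}^*)^2$; using $n_2\le a_2n/d$ with $a_2$ sufficiently small (permissible since $a_2\le a_1/28$ and the $a_i$ are absolute constants to be chosen) gives $\|x\|_2\le(d\sqrt n/4)x_{n_2}^*=(\bb/4)x_{n_2}^*$. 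The fourth bullet is then immediate from the third and $x_{n_2}^*\le 4x_{n_3}^*$ (from $x\notin\st_2$), since $\bb=4(\bb/4)$.

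The principal technical obstacle is the inequality $n\le p\,n_1^{4+2\alpha_d}$ in the $n_0>p$ subcase of the second bullet: controlling it requires tracking the precise growth of $\alpha_d$ via the identity $p^{2+\alpha_d}=4d$ and combining it with the quantitative threshold $n_0>p$ in a way that relies on the smallness of $1/a_1$; all the other cases are essentially mechanical once the chain of jumps $x_{p^j}^*\le 4d\,x_{p^{j+1}}^*$ has been unwound.
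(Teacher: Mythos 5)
Your proposal is correct and follows essentially the same route as the paper: the case $i=0$ is trivial, the chain of non-jumps $x_{p^j}^*\le 4d\,x_{p^{j+1}}^*$ is unwound, the interval $[1,p^i]$ (or $[1,n_1]$) is decomposed into the $p$-adic blocks, the resulting geometric sum is bounded, and the remaining bullets are obtained by multiplying through by $d^{3/2}$ and then $4$. Your only cosmetic deviation is that you carry $(4d)^j$ in the form $p^{(2+\alpha_d)j}$ throughout and verify the key numerical inequality $n\le p\,n_1^{4+2\alpha_d}$ for the $n_0>p$ subcase by plugging the lower bounds $n_1\ge a_1 n\log d/d^2$ and $n>pd^2/(a_1\log d)$ directly, whereas the paper splits off $n_1\ge p^2$ first; both reduce to a condition of the type $a_1\log d\gtrsim 1$, so the conclusion and the dependence on the absolute constant $a_1$ are the same.
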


\smallskip

\begin{proof}
The case $x\in \st _{0, 0}$ is trivial.

If $1<n_0=n_1\leq p$ then
 $\st_0 = \st_{0,0}$ and thus for $x\not \in \st_0$ we observe
$$
  \|x\|_2^2 = \sum _{i=1}^{n_1-1} (x_{i}^*)^2 + \sum _{i=n_1}^{n} (x_{i}^*)^2 \leq
  16 d^2 n_1 (x_{n_1}^*)^2 + n  (x_{n_1}^*)^2 \leq (16 d^2 p + n)(x_{n_1}^*)^2.
$$
The result follows since  $n_0\leq p$ implies $\a n\leq d^2 p/\log d$ and because $d$ is large enough.

\smallskip

We now assume that $n_0>p$. Let $x\in \st _{0,i}$ for some
$1\leq i\leq r$
or let $x\not \in \st _0$ in which case we set $i=r+1$.
Then for every $j<i$, one has $x\not\in \st_{0, j}$, hence,
assuming without loss of generality  that $x_{p^i}^*=1$, we get
$$
  x_1^*\leq (4d) x^*_{p}\leq (4d)^2 x^*_{p^2}\leq \ldots \leq (4d)^i x_{p^i}^* = (4d)^i = p^{i \log 4d/\log p}.
$$
This implies
\begin{align*}
\|x\|_2^2 &= ((x_{1}^*)^2+\ldots + (x_{p}^*)^2) + ((x_{p+1}^*)^2 + \dots + (x_{p^2}^*)^2)
+ \ldots
\\&\leq
p (4d)^{2 i} + p^2 (4d)^{2(i-1)} + \ldots + p^{i} (4d)^{2} + n
\\&=
\frac{p(4d)^2 ((4d)^{2i} - p^{i})}{(4d)^2-p} + n
  \leq 2 p  (4d)^{2i} +n  = 2p \, p^{2i \log 4d/\log p} + n ,
\end{align*}
which implies the result for $i\leq r$. In the case $i=r+1$, that is, if $x\not\in \st_0$, this gives
$\|x\|_2^2\leq 2p\, n_1^{4+2\alpha_d} + n$. Note that we are in the case $n_0>p$, hence $n_1\geq p^2$.
Using the definition of $n_0$, we observe that $\a n\geq d^2 p/\log d$ and therefore
\begin{equation*}
 n_1^4\geq p^6 n_1  \geq \frac{d^3}{(6 \log d)^3}\, \frac{\a n \log d}{d^2} \geq \frac{\a n\, d}{\log d}
\end{equation*}
which implies for sufficiently large $d$ that $\|x\|_2\leq \sqrt{3p }\, n_1^{2+\alpha_d}$.

If $x\notin \st _{0}\cup \st_1$ then clearly $x_{n_1}^*\leq  d^{3/2} x_{n_2}$, and, if
additionally $n_0=1$, then
$$
  \|x\|_2^2 = \sum _{i=1}^{n_2-1} (x_{i}^*)^2 + \sum _{i=n_2}^{n} (x_{i}^*)^2 \leq
   d^3 n_2 (x_{n_2}^*)^2 +  n  (x_{n_2}^*)^2 \leq  (a_2 d^2 n  +  n ) (x_{n_2}^*)^2 \leq  d^2 n  (x_{n_2}^*)^2/16,
$$
provided $a_2<1/20$ and $d$ is large enough. The case $x\notin \st _{0}\cup \st_1\cup \st_2$ follows as well,
since in this case $x_{n_3}^*\leq 4  x_{n_2}^*$.
This completes the proof.
\end{proof}

\subsection{Proof of Theorem~\ref{lem: a-c}}

We will use the following simple claim.

\begin{claim}\label{almconsvect}
Let $J\subset [n]$, $k=|J|$, and $A>1$. Let $M\in \Mc$. Then
$$
  | \{ i\leq n \, : \, |\supp R_i (M)\cap J| \geq   A k d/n \}| \leq n/A.
$$
\end{claim}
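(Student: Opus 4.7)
The plan is to prove this by a simple double-counting (Markov-type) argument on the number of ones that appear in the columns indexed by $J$.

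First I would observe that since every column of $M \in \Mc$ contains exactly $d$ ones, the total number of ones located in columns indexed by $J$ equals $|J|\, d = kd$. On the other hand, this same quantity can be counted row by row: it is precisely
\[
\sum_{i=1}^{n} |\supp R_i(M) \cap J| = kd.
\]

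Next I would apply Markov's inequality to this sum. Let $B := \{i \leq n : |\supp R_i(M) \cap J| \geq Akd/n\}$ be the set whose cardinality we want to bound. Each row in $B$ contributes at least $Akd/n$ to the total, so
\[
kd = \sum_{i=1}^n |\supp R_i(M) \cap J| \geq \sum_{i \in B} |\supp R_i(M) \cap J| \geq |B| \cdot \frac{Akd}{n}.
\]
Rearranging yields $|B| \leq n/A$, which is exactly the desired conclusion.

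There is no real obstacle here; the statement is purely combinatorial and follows immediately from the column-regularity of $M$ together with Markov's inequality. The only subtlety worth flagging is that the claim uses \emph{column} $d$-regularity (not row regularity), which is why the total count equals $kd$ regardless of how the rows distribute their ones.
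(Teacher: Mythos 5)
Your proof is correct and matches the paper's argument exactly: both count the $kd$ ones in the minor $[n]\times J$ (using column $d$-regularity) and apply a Markov-type bound to conclude $|B|\leq n/A$.
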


\begin{proof}
 The number of ones in the submatrices indexed by  $[n]\times J$ is $kd$. Thus
$$
   | \{ i\leq n \, : \, |\supp R_i (M)\cap J| \geq   A k d/n \}| \, \cdot  A k d/n \leq kd,
$$
which implies the result.
\end{proof}

\medskip

\begin{proof}[Proof of Theorem~\ref{lem: a-c}.]
Clearly, we may assume $x\ne 0$.
Fix a permutation $\sigma=\sigma_x$ of $[n]$ such that $x_i^*=|x_{\sigma(i)}|$ for $i\le n$.
Note that since $x\not\in \st _{0}\cup \st_1\cup\st_2 \cup  \{0\}$ we have
$x_{\nn}^*\ne 0$.

Fix $\lam _0 =\lam _0(x) \in \C$ such that the cardinality of
$$
   J_1:= \{ i\leq n \, : \, |x_i - \lambda_0|\leq \rho\, \nx_2 \}
$$
is at least  $n- \nn+1$. Therefore there exist $k, \ell$ such that $k\leq \nn <\ell$ and
$\sigma(k), \sigma(\ell) \in J_1$.
By  Lemma~\ref{l:norma},
$$
 \|x\|_2
  \leq \bb\, x_{\nn}^*=\bb \, |x_{\sigma(\nn)}|,
$$
hence
$$
  |\lambda_0|- x_{\nn}^*/5\leq |\lambda_0| - \rho \, \nx_2 \leq  |x_{\sigma(\ell)}|= x_{\ell}^*
  \leq x_{\nn}^*  \leq x_{k}^*  =  |x_{\sigma(k)}|
  \leq |\lambda_0| +  \rho\, \nx_2 \leq |\lambda_0| + x_{\nn}^*/5,
$$
where we also used that $\rho\leq 1/(5\bb)$.
This implies
$$
  (5/6) |\lambda_0|  \leq x_{\nn}^*\leq (5/4) \, |\lambda_0|
$$
(in particular, $|\lam _0|\ne 0$) and,
using again that $\rho\leq 1/(5\bb)$,
$$
 \rho \,   \|x\|_2\leq x_{\nn}^*/5\leq |\lam _0| /4.
$$
 Set
$$
  J_2=\sigma([n_{2}])\setminus J_1 , \quad  J_3 = \sigma([\nn])\setminus (J_1\cup J_2), \quad \mbox{ and }
  \quad J_4=[n]\setminus  (J_1\cup \sigma([\nn])).
$$
Then $|J_3|, |J_4|\leq \nn$, $[n]=J_1 \cup J_2\cup J_3\cup J_4$, and
\begin{equation}\label{condonJ}
\forall j\in J_4 \, \, \, \, |x_j|\leq x_{\nn}^*\leq 5|\lam_0|/4 \quad\quad \mbox{ and } \quad\quad
\forall j\in J_3 \, \, \, \, |x_j|\leq x_{n_2}^*\leq  4 x_{\nn}^* \leq 5|\lam_0|.
\end{equation}

\smallskip

 Now, given a matrix $M\in\Mc$, consider
$$
 I_2 = \{ i\leq n \, : \, \supp R_i (M)\cap J_2  \ne \emptyset \} \, \mbox{ and }\,
 I_\ell = \{ i\leq n \, : \, |\supp R_i (M)\cap J_\ell| \geq   16 \nn d/n \},
$$
for $\ell = 3,4$.
Since $M\in \Mc$ and by Claim~\ref{almconsvect},
we have for small enough $a_2$,
$$
 |I_2|\leq d\, n_{2}\leq n/16
 \quad \mbox{ and } \quad
  |I_\ell|\leq  n/16 \, \, \mbox{ for } \, \, \ell =3,4.
$$
Set $I:=[n]\setminus(I_2\cup I_3\cup I_4\cup \sigma([\nn]))$. Then
$$
  |I| \geq  n - 3 n /16 -  \nn \geq  3 n / 4 \quad \mbox{ and } \quad \forall i\in I \, \, \, \,
  |x_i|\leq x^*_{\nn} \leq (5/4) |\lam _0|.
$$
Moreover, for every $i\in I$, denote $J_\ell '={J' _\ell} (i) = J_\ell \cap \supp R_i (M)$
for $1\leq \ell\leq  4$, and  note that ${J' _2}=\emptyset$ since $i\not\in I_2$.
Using the triangle inequality, we observe for every $i\in I$,
$$
 |\la R_i(M-\ww \idmat), \bar x  \ra|
 \geq \Big|\sum _{j \in  {J_1'}}  x_j\Big| - \sum _{j \in {J_3'}} |x_j| -
 \sum _{j \in {J_4'}} |x_j| - |\ww x_i|.
$$
We estimate  terms in the right hand side separately. By the definition of $J_1$, we have
$$
\Big|\sum _{j \in  {J_1'}}  x_j\Big|\geq |\lam _0|\, |{J' _1}|- \sum _{j \in  {J' _1}}
\Big| x_j-\lambda_0\Big|\geq |{J' _1}| \, ( |\lam _0|- \rho \Vert x\Vert_2)
\geq (d - 32 \nn d/n)\, ( |\lam _0|- \rho \Vert x\Vert_2),
$$
where for the last inequality we used that ${J' _2}=\emptyset$ and that
for $i\not\in I_3\cup I_4$ one has
$$
 |{J' _1}|= d-   |{J' _2}|-|{J' _3}|- |{J' _4}|\geq d - 32 \nn d/n.
$$
Using (\ref{condonJ}), we obtain
$$
  \sum _{j \in  {J_3'}}  |x_j|+ \sum _{j \in  {J_4'}} | x_j|\leq |{J' _3}|\, x_{n_2}^* + |{J' _4}|\, x_{\nn}^*\leq 100 |\lam _0| \nn d/n.
$$
Putting together the above estimates,
 we obtain for large enough $d$
\begin{align*}
    |\la R_i(M-\ww \idmat), \bar x  \ra|
  &\geq (d - 32 \nn d/n)(|\lam _0|- \rho \|x\|_2) - 100 |\lam _0| \nn d/n   - (5/4) |\lam _0| | \ww|
  \\
  &\geq |\lam _0|d/2,
\end{align*}
where we used  $|\lam _0|- \rho \|x\|_2\geq (3/4)|\lam _0|$, $\nn/n\leq c/\log d$, and $|\ww|\leq d /6$.
This implies
$$
   \| (M - \ww \idmat) x\|_2 \geq  \frac{|\lam _0| d}{2}\, \sqrt{\frac{3n}{4}} \geq
   \frac{ d \sqrt{3n}}{5 }\,  x^*_{\nn}\geq \frac{ d \sqrt{3n}}{5 \bb}\, \|x\|_2 ,
$$
which completes the proof.
\end{proof}

\subsection{Lower bounds on $\|Mx\|_2$ for  vectors from $\st_0$}

Here we provide lower  bounds on the ratio $\|Mx\|_2/\nx _2$  for vectors $x$ from $\st_0$.
Recall that given $\eps$ and $k$ the set $\Omega_{k, \eps}$  was introduced  before
Theorem \ref{graph th known}.

\begin{lemma} \label{l:T0}
Let $C\leq d\leq n$, where $C$ is an absolute positive constant and  $x\in \st_0$.
Let $\ww\in \C$ be such that $|\ww|\leq d$.
If $1<n_0=n_1\leq p$ and   $M\in  \Omega_{n_0,\eps _0}$ then
$$ \|(M-\ww Id) x\|_2 \geq \sqrt{d/8n}\, \|x\|_2.$$
If
$$
 n_0>p \quad \quad  \mbox{ and }\quad  \quad M\in\bigcap _{j= 1}^{r+1}\, \Omega_{p^j,\eps _0}
$$
then
$$
 \|(M-\ww \idmat) x\|_2 \geq \min\left\{\sqrt{d/8n},\, \,
 \frac{p\sqrt{n_1 d}}{8 \hh_{r+1}}
 \r\}\, \|x\|_2.
 $$
\end{lemma}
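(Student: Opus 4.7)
The strategy is to exploit the jump in $x$ by finding, in $M$, many rows whose unique intersection with the ``top'' coordinates of $x$ forces the inner product to be large. Assume first $n_0>p$ and fix the unique index $i\in\{0,1,\dots,r\}$ with $x\in\st_{0,i}$. Let $\sigma$ be a permutation of $[n]$ such that $|x_{\sigma(k)}|=x_k^*$ and set
\[
J^\ell:=\sigma([p^i]),\qquad J^r:=\sigma([p^i+1,p^{i+1}]),
\]
so that $|J^\ell|=p^i$, $|J^r|=(p-1)p^i$ and $pm=p^{i+1}\leq n_1\leq a_1\eps_0 n/(5d)\leq c_{\ref{graph th known}}\eps_0 n/d$ by \eqref{new-par-nnnn}. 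Applying Lemma~\ref{c:SJ} with these $J^\ell,J^r$ and $m=p^i$ (using $M\in\Omega_{p^{i+1},\eps_0}$) yields
\[
|\il|\geq (1-2\eps_0 p)\,d\,p^i\geq \tfrac{3}{5}d\,p^i,
\]
since $\eps_0 p\le 1/5$.

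Next, since the rows indexed by $\sigma([p^{i+1}])$ are the only rows where $|x_i|$ can exceed $x_{p^{i+1}}^*$, we remove them from $\il$ and let $I':=\il\setminus\sigma([p^{i+1}])$; then $|I'|\geq (3/5)dp^i-p^{i+1}\geq dp^i/2$ for $d$ large (as $p/d=o(1)$). For every row index $i_0\in I'$, the row $R_{i_0}(M)$ has exactly one entry in $J^\ell$ (at some column $j$ with $|x_j|\ge x_{p^i}^*$), no entry in $J^r$, and $d-1$ entries in columns $\ell$ with $|x_\ell|\le x_{p^{i+1}+1}^*\le x_{p^{i+1}}^*$; moreover $|x_{i_0}|\le x_{p^{i+1}}^*$. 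The jump $x_{p^i}^*>4d\,x_{p^{i+1}}^*$ together with $|\ww|\le d$ then gives
\[
\big|\langle R_{i_0}(M-\ww\,\idmat),x^\dagger\rangle\big|\geq x_{p^i}^*-(d-1)x_{p^{i+1}}^*-|\ww|\,x_{p^{i+1}}^*\geq x_{p^i}^*-2d\,x_{p^{i+1}}^*\geq \tfrac{1}{2}x_{p^i}^*.
\]

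Summing squares over $I'$ yields
\[
\|(M-\ww\,\idmat)x\|_2^2\geq |I'|\cdot\tfrac{1}{4}(x_{p^i}^*)^2\geq \tfrac{d\,p^i}{8}(x_{p^i}^*)^2,
\]
and by Lemma~\ref{l:norma} we have $x_{p^i}^*\ge \|x\|_2/\hh_i$, so
\[
\|(M-\ww\,\idmat)x\|_2\geq \frac{\sqrt{d\,p^i}}{\sqrt{8}\,\hh_i}\,\|x\|_2.
\]
It remains to show that the right-hand side dominates $\min\{\sqrt{d/(8n)},\,p\sqrt{n_1 d}/(8\hh_{r+1})\}\|x\|_2$ for every $0\le i\le r$. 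For $i=0$, $\hh_0=\sqrt{n}$ gives exactly $\sqrt{d/(8n)}$. For $1\le i\le r$, use the trivial bound $\hh_i\le \sqrt{n}+\sqrt{2p}\,p^{i(2+\alpha_d)}$, and distinguish two subcases: if $\sqrt{n}\ge \sqrt{2p}\,p^{i(2+\alpha_d)}$, then $\hh_i\le 2\sqrt{n}$ and $\sqrt{dp^i}/\hh_i\ge \sqrt{dp^i/(4n)}\ge \sqrt{d/(4n)}$; otherwise $\hh_i\le 2\sqrt{2p}\,p^{i(2+\alpha_d)}$, so the bound is at least $\sqrt{d/(8p)}\,p^{-i(3/2+\alpha_d)}$, which is a decreasing function of $i$, attaining its minimum at $i=r$, and a direct computation using $n_1=p^{r+1}$ and $\hh_{r+1}=\sqrt{3p}\,n_1^{2+\alpha_d}$ shows this minimum is bounded below (up to absolute constants) by $p\sqrt{n_1 d}/\hh_{r+1}$. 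The remaining case $1<n_0=n_1\le p$ is analogous with the roles of $p$ and $n_0$ interchanged in Lemma~\ref{c:SJ} (take $J^\ell=\sigma(\{1\})$, $J^r=\sigma([2,n_0])$, $m=1$, and use $M\in\Omega_{n_0,\eps_0}$), giving directly $\|(M-\ww\,\idmat)x\|_2\ge \sqrt{d/(8n)}\|x\|_2$.

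The main bookkeeping obstacle is the comparison in the last paragraph between $\sqrt{dp^i}/\hh_i$ and the stated minimum: the transition between the ``$\hh_i\approx\sqrt{n}$'' and the ``$\hh_i\approx\sqrt{2p}\,p^{i(2+\alpha_d)}$'' regimes must be tracked carefully to see that the worst case over $i\in\{0,\ldots,r\}$ is precisely captured by the two quantities $\sqrt{d/(8n)}$ and $p\sqrt{n_1 d}/(8\hh_{r+1})$ in the statement; all other ingredients (Lemma~\ref{c:SJ}, the jump inequality, and the shift removal) are essentially immediate from the setup.
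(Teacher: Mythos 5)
Your proof is correct and follows essentially the same route as the paper: the same choice of $J^\ell=\sigma([p^i])$, $J^r=\sigma([p^i+1,p^{i+1}])$, the same application of Lemma~\ref{c:SJ} via $\Omega_{p^{i+1},\eps_0}$, the same removal of rows in $\sigma([p^{i+1}])$ to control the diagonal shift term, the identical inner-product estimate exploiting the jump $x^*_{p^i}>4d\,x^*_{p^{i+1}}$, and the same case split on $i$ and on which term dominates $\hh_i$. The only place where you are slightly loose is the first subcase ($\sqrt n\geq\sqrt{2p}\,p^{i(2+\alpha_d)}$): bounding $p^i\geq1$ only gives $\sqrt{d/(32n)}$ after dividing by $\sqrt 8$; to recover the clean $\sqrt{d/(8n)}$ of the statement you should use $p^i\geq p\geq4$ for $i\geq1$ (and treat $i=0$ separately, as you do), exactly as in the paper.
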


\begin{proof}
We prove the case $n_0\geq p$, the other case is similar. Fix $x\in \st_0$ and
fix $0\leq i\leq r$ such that $x\in \st_{0, i}$ and denote $m=p^i$.
Fix a permutation $\sigma=\sigma_x$ of $[n]$ such that $x_j^*=|x_{\sigma(j)}|$ for $i\le n$.
Then $x_{m}^{*}> 4 d x_{p m}^{*}$. Let
$$
  J^\ell=\sigma([m]), \quad  J^r=\sigma([p m]\setminus[m]), \quad \mbox{ and } \quad
  J_3:=(J^\ell\cup J^r)^c.
$$
Then, for sufficiently small $\a$,
$$
  |J^\ell\cup J^r|= pm \leq p n_0 \le  c_{\ref{graph th known}}\eps_0 n/d \quad \, \, \, \mbox{ and } \quad \, \, \,
 |J^r|=(p-1)|J^\ell|=(p-1)m.
$$
Denote by $I_0$ the set of rows  having exactly one 1 in $J^\ell$ and no 1's in $J^r$.
 Lemma~\ref{c:SJ}  implies that
$$
  |I_0|\geq  (1-2p\eps_0)m d\ge 3m d/5.
$$
Let $I=I_0\setminus (J^\ell\cup J^r)$ (so that the submatrix indexed by  $I\times (J^\ell\cup J^r)$ does not intersect the main diagonal).
Then $|I|\geq 3md/5-pm\geq md/2$ provided that
$d$ is large enough.  By definition, for every $s\in I$  there exists $j(s)\in J^\ell$ such that
$$
   \supp R_{s}\cap J^\ell=\{j(s)\},\quad \supp R_{s}\cap J^r= \emptyset,\quad\text{and}\quad
   \max_{i\in J_3}|x_i|\le x^*_{m p} .
$$
Using Lemma~\ref{l:norma}, the fact that $s\not\in J^\ell\cup J^r$ (which implies $x^*_{s}\leq x^*_{pm}$), and that $j(s)\in J^{\ell}$
(which implies $|x_{j(s)}|\geq x^*_m> 4 d x^*_{m p}$), we obtain
\begin{align*}
  |\langle R_{s} (M-\ww \idmat),\, \bar x \rangle| &=\Big| x_{j(s)}+\sum_{j\in J_3\cap \supp R_{s}} x_j - \ww   x_s \Big| \\&  \geq|x_{j(s)}|- (d-1)\, x_{m p}^*
   - |\ww| \, x_{m p}^* \, \ge  x^*_{m}/2 \geq   \|x\|_2/ 2 \hh_i .
\end{align*}
Since the number of such rows is $|I|\geq m d/2= p^i d/2$ we obtain
$$
  \|(M-\ww \idmat) x\|_2 \geq \sqrt{p^i d}\, \|x\|_2/(2 \sqrt{2} \hh_i).
$$
If $i=0$ then $p^{i/2}/ \hh_i=1/\sqrt{n}$. If $i\geq 1$ and $\sqrt{n}\geq \sqrt{2p}\,  p^{ i\,(2+ \alpha_d)} $,
then $\hh_i\leq 2\sqrt{n}$ and $p^{i/2}/ \hh_i\geq  p^{i/2}/(2\sqrt{n})\geq 1/\sqrt{n}$ provided $d$ is large enough. If $\sqrt{n}\leq \sqrt{2p}\,  p^{ i\,(2+ \alpha_d)} $ then $\hh_i\leq 2\sqrt{2p}\,  p^{ i\, (2+ \alpha_d)}$. Using this and that  $p^i\leq p^r=n_1/p$, we get
$$
 \frac{p^{i/2}}{ \hh_i}\geq
  \frac{p^{i/2}}{2\sqrt{2p}\,  p^{ i\, (2+ \alpha_d)}}\geq
    \frac{p^{r/2}}{2\sqrt{2p}\,  p^{ r\, (2+ \alpha_d)}}\geq
     \frac{p}{2 \sqrt{2}}\, \frac{\sqrt{n_1}}{n_1^{2 +\alpha_d}},
$$
which implies the result.
\end{proof}

\subsection{Nets for  steep vectors from $\st_1\cup \st_2$}

\bigskip

For the rest of steep vectors (i.e., for vectors from $\st_1 \cup \st_2$) we will
use the union bound together with a covering argument.
We first construct nets for ``normalized'' versions of the sets $\st_i$ and then provide
individual probability bounds for elements of the nets. The natural normalization would be
$x_{n_1}^{*}=1$, which we use for $\st_1$.
However, for individual probability bounds below and to have the same level of approximation, it is
more convenient to use a slightly different normalization for $\st_2$. Moreover, since $\st_2$ has a
constant jump, we can't just ignore the tail of the sequence as we will do for vectors in $\st _1$.
To overcome this difficulty, and to have  a  better control on the size of a net, we intersect this set
with the set of almost constant vectors. We set
$$
 \st'_1=\{x\in \st_1\,:\, x_{n_1}^{*}=1\} \quad \mbox{ and } \quad
 \st'_2=\st'_2(\rho)=\{x\in \st_2\,:\, x_{n_2}^{*}=1\}
 \cap \BB(\rho),
$$
where $0<\rho \leq 1/(d^{3/2}\, \bb)$.

\begin{lemma}[Cardinalities of nets]
\label{l:net}
Let $d\leq n$ be large enough and $0<\rho \leq 1/(d^{3/2}\, \bb)$.
Then, for each $i=1,2$,
there exists a $d^{-3/2}$-net $\Net_i$ in $\C^n$ for $\st_i'$  in $\ell_{\infty}$-metric with
$$
  |\Net_i| \le \exp\left( d n_i/4 \r),
$$
and for every $y\in \Net_i$ one has $y_j^*\leq 1/4 + 1/d^{3/2}$ for all $j\geq n_{i+1}$.
\end{lemma}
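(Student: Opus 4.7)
The plan is to build $\Net_i$ via a multi-scale $d^{-3/2}$-spaced grid that exploits the structural features of $\st_i'$: the normalization $x_{n_i}^*=1$, the defining jump (which forces $x_{n_{i+1}}^*$ to be small so that all but at most $n_{i+1}-1$ positions can be snapped to a single value, namely $0$ for $i=1$ or a grid approximation $\lambda_{\rm net}$ of the almost-constant value $\lambda$ for $i=2$), the hierarchy $x_{p^j}^*\leq(4d)^{r+1-j}x_{n_1}^*$ inherited from $x\notin\st_0$, and, for $i=2$, both $x_{n_1}^*\leq d^{3/2}x_{n_2}^*$ from $x\notin\st_1$ and the $\BB(\rho)$-structure (which via Lemma~\ref{l:norma} gives $\|x\|_2\leq\bb/4$ and hence the pivotal bound $|\lambda|\leq x_{n_3}^*+\rho\|x\|_2<1/4+1/(4d^{3/2})$).

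First I handle $\st_1'$ in the generic regime $n_0>p$; the cases $n_0=1$ and $1<n_0\leq p$ are single-bucket degenerations treated identically. A net element $y\in\Net_1$ is specified by: (a) the choice of the $n_1$ positions where $x$ attains its top-$n_1$ values, of cost $\log\binom{n}{n_1}=O(n_1\log d)$ via $n/n_1\leq d^2/(a_1\log d)$; (b) a partition of these into rank-buckets $B_0,\ldots,B_r$ with $|B_j|=p^{j+1}-p^j$, whose multinomial cost is $\sum_j|B_j|\log(n_1/|B_j|)\leq 2n_1\log p$ thanks to $\sum_{k\geq 1}k/p^k\leq p/(p-1)^2$; (c) for each $i\in B_j$, a $d^{-3/2}$-grid point in the disc of radius $(4d)^{r+1-j}$, of total cost $\sum_j|B_j|\cdot O((r+1-j)\log d)\leq O(n_1\log d)$ by the same series identity applied to $(p-1)\sum_j p^j(r+1-j)\leq 2n_1$; (d) a subset of at most $n_2-n_1-1$ ``medium'' positions in the complement with a $d^{-3/2}$-grid point in the unit disk each, of cost $O(n_2\log d)$; (e) value $0$ at all other positions. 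Summing, $\log|\Net_1|\leq C(n_1+n_2)\log d$; by \eqref{new-par-nn} and $a_2\leq a_1/28$ one has $n_2\log d\leq dn_1/28$, while $n_1\log d\leq dn_1/32$ once $d$ is sufficiently large, yielding $\log|\Net_1|\leq dn_1/4$.

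For $\st_2'$ I augment the scheme by first choosing a $(2d^{3/2})^{-1}$-spaced grid point $\lambda_{\rm net}\in\{|z|\leq 1/2\}$ (cost $O(\log d)$) and setting $y_i:=\lambda_{\rm net}$ at all almost-constant positions; the estimate on $|\lambda|$ guarantees $\ell_\infty$-error at most $1/(2d^{3/2})+\rho\|x\|_2\leq d^{-3/2}$ there and $|\lambda_{\rm net}|\leq 1/4+d^{-3/2}$. On the at most $n_3-1$ non-almost-constant positions I rerun the multi-scale scheme with top-bucket radii inflated by $d^{3/2}$ (an extra $O(n_3\log d)$) and an additional medium range $[1,d^{3/2}]$; the positioning contributes $\log\binom{n}{n_3}\leq n_3\log(e\log d/a_3)$, absorbed using $a_3\leq a_2/28\leq a_1/28^2$, and the total stays $\leq dn_2/4$. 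The tail property is then automatic: for $i=1$ the net assigns $y_i=0$ outside the $n_2-1$ selected positions, giving $y_{n_2}^*=0$; for $i=2$ the net assigns $y_i=\lambda_{\rm net}$ outside the $n_3-1$ non-almost-constant positions, giving $y_{n_3}^*\leq|\lambda_{\rm net}|\leq 1/4+d^{-3/2}$.

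The hard step is the grid accounting in (c): a naive single-scale grid of radius $h_{r+1}\sim n_1^{2+\alpha_d}$ (from Lemma~\ref{l:norma}) would cost $\sim n_1\log(h_{r+1}d^{3/2})\sim n_1\log n$, which is ruinous precisely when $\log n\gg\log d$ (the regime where $d$ is a bounded constant while $n\to\infty$, still allowed by the theorem). The geometric rank-bucket decomposition, combined with the convergence of $\sum_k k/p^k\leq p/(p-1)^2$, collapses this to $O(n_1\log d)$, which is then absorbed into $dn_1/4$. This is where the choice $p\approx\sqrt{d/\log d}$ and the $(4d)$-jump thresholds defining $\st_0$ at ranks $p^j$ are used in an essential way.
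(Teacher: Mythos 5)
Your construction is essentially the paper's: both proofs partition $[n]$ into the top-$n_1$ ranks, a medium block, and a small/constant tail, then approximate the top block with a geometrically-sized rank-bucket decomposition $|I_j|\sim p^j(p-1)$ with radii $(4d)^{r+1-j}$ (inherited from $x\notin\st_0$), use $\{0\}$ (for $i=1$) or a single grid value for $\lambda_0$ (for $i=2$, via the $\BB(\rho)$ structure and the bound $|\lambda_0|\lesssim 1/4$) on the tail, and absorb the resulting $O(n_{i+1}\log d)$ exponent into $dn_i/4$ using \eqref{new-par-nn} and the constant hierarchy \eqref{new-par-aa}. The only differences are cosmetic (re-indexing of the buckets, slightly different intermediate constants, and a multinomial versus binomial-product bound for the partition count), so the proposal matches the paper's argument.
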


\begin{proof}
The constructions for $i=1$ and $i=2$ are quite similar, and we carry out the argument simultaneously for both
cases, making adjustments where necessary.
For every $x\in \st_i'$ ($i=1,2$) fix a permutation $\sigma=\sigma_x$ of $[n]$ such that $x_j^*=|x_{\sigma(j)}|$ for $j\le n$.

\smallskip

The main idea is to split a given vector from $\st_i'$ into three parts according to the behaviour of its coordinates
(essentially, parts corresponding to the largest coordinates, middle sized coordinates, and the smallest coordinates
with small adjustment in the case $i=2$) and approximate each part separately. Then we construct nets for vectors with the
same splitting and take the union over all nets. To be more precise, for each $x\in \st_i'$ ($i=1,2$) we consider a partition
of $[n]$ into three sets $B_1(x)$, $B_2(x)$, $B_3(x)$
corresponding to $x$, as follows. If $n_1=1$ (i.e., if $d^2/\log d \geq \a n$) we set $B_1(x)=\emptyset$.
Otherwise, if $n_1>1$, we set $B_1(x)=\sigma_x([n_1])$.
Further, we define sets $B_2(x),B_3(x)$ (this definition will depend on $i$).
For $i=1$ we set
$$
 B_2(x)= \sigma_x([n_2])\setminus B_1(x) \quad \mbox{ and }  \quad B_3(x)= \sigma_x([n]\setminus [n_2]).
$$
If $i=2$ then since $x\in  \BB(\rho)$ there exists $\lam _0 (x)$ such that the cardinality of the set
$$
 B_0(x) :=\{ j\leq n \, : \,  |x_j-\lam _0(x)|\leq \rho\|x\|_2 \}
$$
is larger than $n-n_3$. Note that by the assumption on $\rho$ and by Lemma~\ref{l:norma}, for every
$x\in \st_2'$ we have
$$
 \rho\|x\|_2\leq x^*_{n_2}/(4d^{3/2})= 1/(4d^{3/2}).
$$
Since $n_3<n/2$, there exists $j_0\geq n_3$ such that $\sigma_x(j_0)\in B_0(x)$.
Therefore,
$$
   |\lam _0(x)|\leq  x_{j_0}^* + \rho\|x\|_2 < x_{n_2}^*/4+ 1/(4d^{3/2})\leq 1/3.
$$
Using again that $x_{n_2}^*=1$ we observe that $\sigma_x (j) \notin B_0(x)$ for every $j\leq n_2$,
in particular, $B_1(x)\cap B_0(x)=\emptyset$.
Finally, in the case $i=2$, we choose an arbitrary subset $B_3(x) \subset B_0(x)$ of cardinality
$n-n_3$ and fix it, and we let $B_2(x)=  [n]\setminus (B_1(x)\cup B_3(x))$.

\smallskip

Note that if $n_1>1$ then for every $x\in \st_i'$ in both cases $i=1$ and $i=2$ we have
$$
  |B_1(x)|=n_1, \quad  |B_2(x)|=n_{i+1}-n_1, \quad \mbox{ and } \quad  |B_3(x)|=n-n_{i+1}.
$$
Thus, given a partition of $[n]$ into three sets
$B_1$, $B_2$, $B_3$ with cardinalities $|B_1|=n_1$, $|B_2|=n_{i+1}-n_1$, $|B_3|=n-n_{i+1}$,
it is enough to construct a net for vectors $x\in \st_i'$ with
$B_1(x)=B_1$, $B_2(x)=B_2$, $B_3(x)=B_3$ and then take the union of nets over all such partitions $\{B_1,B_2,B_3\}$
of $[n]$. In what follows, we skip the case $n_1=1$ (and $B_1=\emptyset$) as the simplest one, and assume that $n_1>1$.

\smallskip

Now we describe our construction. Note  that for $x\in \st_i'$ ($i=1,2$) we have $x_{n_1}^*\leq d^{(3/2)(i-1)}$
and, since $x\in(\st_0)^c$, we also have
\begin{equation}\label{decr2}
  x_1^*\leq (4d) x^*_{p}\leq (4d)^2 x^*_{p^2}\leq \ldots \leq (4d)^{r+1} x_{p^{r+1}}^* = (4d)^{r+1} x_{n_{1}}^*\leq d^{(3/2)(i-1)} \, (4d)^{r+1}
\end{equation}
(with corresponding adjustment for the case $n_1<p$).
Recall that we deal with the case $n_1>1$ (otherwise, $B_1(x)=\emptyset$ and we skip the first part).
Fix $I_0\subset [n]$ with $|I_0|=n_1$ (which will play the role of $B_1$). We  construct
a $d^{-3/2}$-net $\Net_{I_0}$ in the set
$$
   \st _{I_0}:=\big\{x\in(\st_0)^c:\,\sigma_x([n_1])=B_1(x) = I_0,\,\,x^*_{n_1}\leq d^{(3/2)(i-1)},\,\,x^*_{n_1+1}=0\big\}.
$$
Clearly, the nets $\Net_{I_0}$ for various $I_0$'s can be related by appropriate permutations, so
without loss of generality we can assume that $I_0=[n_1]$.
First, we construct a partition of $I_0$.
If $n_1=n_0\leq p$, let $I_1=[n_1]$. Otherwise, recall that $n_1=p^{r+1}$ and let
$$
  I_1=[p],\, \, \, I_2=[p^2]\setminus [p], \, \, \,  I_3=[p^3]\setminus [p^2], \, \ldots, \,\, \, I_{r+1}=[p^{r+1}]\setminus [p^{r}].
$$
Then the sets $I_1, \ldots, I_{r+1}$ form a partition of $I_0=[n_1]$.
Now, consider the set
$$\st^*:=\big\{x\in\st_{[n_1]}:\,\sigma_x(I_j)=I_j,\;\;j=1,2,\dots,r+1\big\}$$
and construct a $d^{-3/2}$-net $\Net^*$ in $\st^*$ in the following way.
Below we provide the proof for the case $n_1> p$ (i.e.,
when we have at least two sets in the partition), the other case is simpler.
By \eqref{decr2}, for every $x\in \st^*$, one has  $\|P_{I_j}x\|_\infty\le b:= d^{(3/2)(i-1)}\, (4d)^{r+2-j}$ for every
$j\le r+1$  (where $P_I$ denotes the coordinate projection onto $\C^I$). Set
$$
  \Net^*:=\Net_{1}\oplus\Net_{2}\oplus\cdots\oplus\Net_{r+1},
$$
where $\Net_{j}$ is a $d^{-3/2}$-net (in the $\ell_\infty$-metric) of cardinality at most
$$
  (3 b d^{3/2})^{2|I_j|} \leq  (4d)^{2(r+5-j) p^j}
$$
in the coordinate projection of the complex cube $P_{I_{j}}(bB_\infty^n)$.
Since
$d$ is large enough and $n_1=p^{r+1}$, we observe
$$
  \sum_{j=1}^{r+1}  2(r+5-j) p^j = 2  p^{r+1}\, \sum_{m=0}^{r} (m+4) p^{-m} \leq 10 p^{r+1} = 10 n_1,
$$
which implies
$$
  |\Net^*|\le\prod_{j=1}^{r+1}|\Net_{j}| \le\exp( 10 n_1  \log (4d)).
$$
To pass from the net for $\st^*$ to the net for $\st_{[n_1]}$, let $\Net_{[n_1]}$ be the union
of nets constructed as $\Net^*$ but for arbitrary partition $I_1'$, ..., $I_{r+1}'$ of $[n_1]$ with $|I_j'|=|I_j|$.
Using that $p= \lfloor (1/5)\sqrt{d/\log d} \rfloor$, we observe that
$$
 \sum _{j=1}^{r} {p^j}\log (ep) \leq \frac{p^{r+1}}{p-1} \log (ep) \leq  n_1 \log^2 d/\sqrt{d}.
$$
 Therefore, for large enough $d$,
$$
  |\Net_{[n_1]}| \le  |\Net^*|\,   \prod_{j=1}^{r} { {p^{j+1}} \choose {p^j }}
  \le  |\Net^*|\, \prod_{j=1}^{r} \left(ep \right) ^{p^j}
  \le \exp( 11 n_1  \log d  ).
$$

 Now we construct a net for the second part of the vector.
 Fix   $J_0\subset [n]$ with $|J_0|=n_{i+1} - n_1$ (which will play the role of $B_2$). We  construct
a $d^{-3/2}$-net  in the set
$$
   \st _{J_0}:=\{ P_{B_2(x)}x \, : \,  x\in\st_i',\,\, B_2(x)=J_0,\,\,x^*_{n_{i+1}}=0  \}.
$$
  Since $x_{n_1}^*\leq d^{(3/2)(i-1)}$ for $x\in \st_i'$, it is enough to take $d^{-3/2}$-net  ${\mathcal{K}}_{J_0}$ of cardinality at most
$$
   (3 d^{3/2}  d^{(3/2)(i-1)}) ^{2 |J_0|} \leq (3 d) ^{3i n_{i+1}}
$$
in the coordinate projection of the complex cube   $P_{J_{0}}(d^{(3/2)(i-1)} B_\infty^n)$.

 It remains to construct a net for the third part of the vector, corresponding to coordinates in $B_3$. Fix
 $B$ of cardinality $n-n_{i+1}$ and consider the set
$$
   \st _{B}:=\{P_{B_3(x)}x \, : \,  x\in \st_i',\,\, B_3(x)=B\}.
$$
 If $i=1$ then, by definitions, $\|y\|_{\infty}< d^{-3/2}$ for every $y\in \st _{B}$, therefore our net, $\mathcal{O}_B$, will consist of $0$ only.
 In the case $i=2$, for $x\in \st_2'$ and $j\in B$, using Lemma~\ref{l:norma} and the condition on $\rho$,
 we have that
$$
 |x_j-\lam _0(x)|\leq \rho\|x\|_2  \leq  (\rho \bb/4) x_{n_2}^*\leq 1/(4 d^{3/2})
  \quad \mbox{ and } \quad |\lam _0(x)|\leq   1/3.
$$
Take a $3/(4 d^{3/2})$-net $\mathcal{O}$ in the set $\{\lam \in \C\, :\, |\lam |\leq 1/3\}$ of cardinality at most
$2 d^3$ and let
$$
   \mathcal{O}_B:=\{y\in \C^B\, :\, \exists \lam \in \mathcal{O} \, \, \mbox{such that} \, \, \forall j\in B \, \, \mbox{one has}
   \, \, y_j = \lam\} .
$$
 Clearly, $\mathcal{O}_B$ is a $d^{-3/2}$-net for $\st _{B}$.

Finally consider the net
$$
 \Net:=\bigcup\{y=y_1+y_2+y_3:\,y_1\in\Net_{I_0},\,y_2\in\mathcal{K}_{J_0},\, y_3\in \mathcal{O}_B\},
$$
where the union is taken over all partitions of $[n]$ into $I_0, J_0, B$ with
$|I_0|=n_1$, $|J_0|=n_{i+1}-n_1$, and $|B|=n-n_i$.
Clearly, $\Net$ is a $d^{-3/2}$-net for $\st _i'$ and, using (\ref{new-par-nn}) and (\ref{new-par-aa}),
we obtain for large enough $d$,
$$
 |\Net|\le {n\choose {n_{i+1}}}\, {n_{i+1}\choose n_1}\, |\Net_{I_0}|\, |\mathcal{K}_{J_0}|\, |\mathcal{O}_B|
 \le \left(\frac{e n}{n_{i+1}}\r)^{n_{i+1}}\, \left(\frac{e  n_{i+1}}{n_1}\r)^{n_{1}}
 \,  (3 d)^{11 n_1+ 3 i n_{i+1}+ 3}
$$
$$
   \leq \exp\left( 7 n_{i+1} \log d \r) \leq \exp\left( 7 (a_{i+1}/a_i) d n_i  \r)
   \leq \exp\left( d n_{i}  /4 \r).
$$
Without loss of generality (by removing unnecessary vectors from $\Net$), we may assume that  every $y\in \Net$
approximates some $x\in \st_i'$. This implies that for every $y\in \Net$ one has $y_j^*\leq 1/4+1/d^{3/2}$
for all $j\geq n_{i+1}$,
 completing the proof.
\end{proof}

\subsection{Individual probability bounds}

To obtain the lower bounds on $\|(M+\WW)x\|_2$, where $\WW$ is a fixed matrix, for vectors $x$ from our nets, we  investigate the
behavior of coordinates of $(M+\WW)x$, that is of the inner products $\la R_i(M+\WW), \bar x  \ra$. One of the tools
that we use is  Theorem~\ref{graph th known} together with Lemma~\ref{c:SJ} applied to the $2m$ columns of $M$ corresponding
to the $m$ biggest and $m$ smallest (in the absolute value) coordinates of $x$ with properly
chosen $m$. Then, using jumps,  we show that the inner product of some row $R_i(M+\WW)$ with the first
part of the vector and with the second part of the vector cannot be simultaneously large.
This will reduce the set of matrices under consideration to a much smaller set, where it is
easier to obtain a good probability bound. To make our scheme work we will use the following
subdivision of $\Mc$.

Given $J\subset [n]$ and $M\in \Mc$ we denote
$$
  I (J, M) = \{i \leq n \, : \, |\supp R_i(M) \cap J |       =1\}
$$
({\it cf.},  the definition of $\il(M)$,  $\ir(M)$   before  Lemma~\ref{c:SJ}, clearly, if
we split $J$ into $J^\ell$ and $J^r$, then $I (J, M)= \il(M) \cup \ir(M)$).

Fix $J\subset [n]$.
Given a subset $I$ of $[n]$ and $V=\{v_{ij}\}\in \Mc$, consider the class
$$
   \f (I, V) =  \left\{M\in \Mc \, :\,   \quad I (J, M) =  I \quad
     \mbox{ and } \quad
      \, \, \forall i\leq n\, \forall j\in J^c \, \,\, \,
     \mu_{ij}= v_{ij} \right\}
$$
(depending on the choice of $I$ such a class can be empty). In words, we first fix the columns
indexed by $J^c$ and then fix the set of indices $I$ such that the rows indexed by $I$  have only one 1 in columns indexed by $J$.
Clearly, $\Mc$ splits into  disjoint union of classes $\f (I, V)$ over some subset of matrices  $V$ in $\Mc$ and
all $I\subset [n]$.

\begin{lemma}[Individual probability]
\label{individual}
There exist absolute constants $C>1>\eps >0$ such that the following holds.
Let $C<d< n$, $i=1,2$, and $\WW$ be a complex $n\times n$ matrix.
 Assume $x\in \C^n$ satisfies
$$
   x^*_{n_i}\geq 1/2 + x^*_j  \, \, \, \mbox{ for every }\, \, j\geq  n_{i+1}.
$$
Denote
$
   E(x) :=\left\{ M \in \Mc\, :\, \|(M+\WW)x  \|_2\leq  \sqrt{ n_i d} /24\r\}.
$
Then
$$
 \Prob(E\cap  \Omega_{2n_i, \eps} )\leq \exp(- n_i d/2).
$$
\end{lemma}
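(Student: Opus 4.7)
My approach combines the structural information from Lemma~\ref{c:SJ} with a class decomposition of $\Mc$ and a switching/anti-concentration argument.

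\emph{Setup.} Fix a permutation $\sigma$ with $|x_{\sigma(k)}|=x_k^*$, and set $J^\ell=\sigma([n_i])$, $J^r=\sigma([n-n_i+1,n])$, $J=J^\ell\cup J^r$, $J_0=[n]\setminus J$. The hypothesis gives $|x_{j_\ell}-x_{j_r}|\geq |x_{j_\ell}|-|x_{j_r}|\geq 1/2$ for every $j_\ell\in J^\ell$ and $j_r\in J^r$ (using that $n-n_i+1\geq n_{i+1}$ in our regime). Applying Lemma~\ref{c:SJ} with $p=2$ and $m=n_i$, and choosing the absolute constant $\varepsilon$ small enough to ensure $2n_i\leq c_{\ref{graph th known}}\varepsilon n/d$, yields $|\il(M)|,|\ir(M)|\geq (1-4\varepsilon)dn_i$ for every $M\in\Omega_{2n_i,\varepsilon}$. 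For $M\in E(x)$, Markov's inequality applied to $\|(M+W)x\|_2^2\leq n_id/576$ shows that at most $n_id/36$ rows $s$ satisfy $|\langle R_s(M+W),x^\dagger\rangle|\geq 1/4$, so for $M\in E(x)\cap\Omega_{2n_i,\varepsilon}$ the set
$$\mathcal K(M):=\bigl\{s\in\il(M):|\langle R_s(M+W),x^\dagger\rangle|<1/4\bigr\}$$
has $|\mathcal K(M)|\geq 0.9\,dn_i$, provided $\varepsilon$ is sufficiently small.

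\emph{Class decomposition.} Decompose $\Mc$ into the disjoint classes $\f(I,V)$ defined before the lemma, and refine each by additionally fixing the $J$-columns of $M$ restricted to rows in $[n]\setminus I$. Inside a refined class $\mathcal G$ one has, for each $s\in I$,
$$\langle R_s(M+W),x^\dagger\rangle=x_{j(s)}-a_s,$$
with $a_s\in\C$ determined by the class, and the assignment $f:I\to J$, $f(s):=j(s)$, is uniform over all maps with prescribed column sums $|f^{-1}(j)|=c_j$. Within $\mathcal G$ the quantity $|\il|=\sum_{j\in J^\ell}c_j$ is fixed, so only classes with $|\il|\geq 0.9\,dn_i$ can intersect $E(x)\cap\Omega_{2n_i,\varepsilon}$.

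\emph{Switching to Bernoullis.} Fix an arbitrary pairing $\pi$ of $I$ into $\lfloor|I|/2\rfloor$ pairs. For each pair $(s_1,s_2)\in\pi$ let $u_k=\{f(s_1),f(s_2)\}$ be the unordered content and $\xi_k\in\{\pm 1\}$ encode the ordering; conditionally on $(u_k)$, the signs $(\xi_k)$ are independent uniform $\pm 1$ Bernoullis (with $\xi_k=+1$ deterministic on pairs whose content is a repeated element). On a \emph{mixed} pair, with one content element in $J^\ell$ and one in $J^r$, the indicator $\mathbf 1[|x_{f(s_j)}-a_{s_j}|<1/4]$ can hold for at most one of the two orderings, since both simultaneously would require $x_{j_\ell}$ and $x_{j_r}$ to lie within $1/4$ of $a_{s_j}$, contradicting $|x_{j_\ell}-x_{j_r}|\geq 1/2$. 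Consequently $|\mathcal K(M)|$ decomposes as a deterministic part (depending only on $\mathcal G$ and $(u_k)$) plus a $\pm 1$ linear form $\sum_k\gamma_k\xi_k$ with integer $|\gamma_k|\leq 1$. A Hoeffding-type inequality for this Bernoulli sum, together with a lower bound on the number of pairs with $\gamma_k\neq 0$, gives
$$\Prob\bigl(|\mathcal K(M)|\geq 0.9\,dn_i\,\bigm|\,\mathcal G,(u_k)\bigr)\leq \exp(-c\,dn_i).$$
Summing this conditional bound over all classes $\mathcal G$ and content sequences $(u_k)$ (which together partition $\Mc$) yields $\Prob(E(x)\cap\Omega_{2n_i,\varepsilon})\leq \exp(-c\,dn_i)\leq \exp(-n_id/2)$ after absorbing constants.

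\emph{Main obstacle.} The technical heart of the argument is the final step, specifically verifying that inside a generic refined class $\mathcal G$ with generic content sequence $(u_k)$, a constant fraction of the pairs in $\pi$ carries a nontrivial Bernoulli coefficient $\gamma_k\neq 0$. This abundance is driven by the lower bounds $|\il|,|\ir|\gtrsim dn_i$ (so that mixed pairs are plentiful) combined with the constraint $|\mathcal K(M)|\geq 0.9\,dn_i$, but classes/contents whose deterministic part already dominates $0.9\,dn_i$ must be shown to have small total measure by a separate argument -- possibly by averaging over random pairings $\pi$ or by directly invoking the small-ball bound of Proposition~\ref{p:Erdos} for those pathological sub-cases.
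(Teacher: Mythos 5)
Your setup through the class decomposition closely tracks the paper: Lemma~\ref{c:SJ} on $\Omega_{2n_i,\eps}$ gives $|\il(M)|,|\ir(M)|\in[(1-4\eps)n_id,\,n_id]$; Markov extracts at least $0.9\,n_id$ rows of $\il(M)$ with small inner product; and conditioning on $\f(I,V)$ (your further refinement fixing $I^c\times J$ is needed to make the claim ``uniform $f:I\to J$ with fixed column sums'' literally true, and is harmless). But the final anti-concentration step is genuinely different, and your version of it has a gap. The paper proves a dichotomy: for each $i\in I$, the set $\Omega_i:=\{M\in\f(I,V):|\langle R_i(M+\WW),x^\dagger\rangle|<1/4\}$ is contained in $\f^\ell_I(i)$ or in $\f^r_I(i)$, never both, because the jump gives $|x_{j_\ell}-x_{j_r}|\geq 1/2$. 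This forces each small row to a deterministic side, so for $M\in E(x)\cap\Omega_{2n_i,\eps}$ there exists a fixed $B_0\subset B\subset I$, $|B|=t$, with $B_0\subset\il(M)$ and $B\setminus B_0\subset\ir(M)$; Claim~\ref{cl-one} then bounds the probability of this specific side-assignment by a direct switching relation together with Claim~\ref{f}. There is no Bernoulli reduction and no concentration of a count.

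Your Bernoulli route, after correctly identifying the independent ordering signs $\xi_k$, hinges on the claim that a mixed pair carries a nonzero coefficient $\gamma_k$ — and that claim is false. For a mixed pair $(s_1,s_2)$ with content $\{j_\ell,j_r\}$, the contribution of the pair to $|\mathcal K(M)|$ is $\mathbf{1}[|x_{j_\ell}-a_{s_1}|<1/4]$ in one ordering and $\mathbf{1}[|x_{j_\ell}-a_{s_2}|<1/4]$ in the other; these coincide whenever $a_{s_1}$ and $a_{s_2}$ are both close to, or both far from, $x_{j_\ell}$. The jump only forbids a \emph{single row} from being small under both of its own two column choices; it says nothing about the \emph{pair-level} coefficient. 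So ``mixed'' does not give $\gamma_k\neq 0$, and there exist refined classes $\mathcal G$ (e.g.\ with all $a_s$ clustered near a common $\lambda$ close to many of the $x_{j_\ell}$) in which $|\mathcal K(M)|$ is large essentially deterministically, $\sum_k\gamma_k\xi_k$ has negligible variance, and Hoeffding is vacuous. To close the argument you would need to show that the union of such classes has exponentially small mass, which is essentially the lemma again. The paper's switching argument avoids the trap by never concentrating $|\mathcal K(M)|$: it bounds the probability of the forced side-assignment $(B_0,B\setminus B_0)$ directly, and the pathological classes are handled automatically by the relation count.
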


\begin{proof}
Fix $x$ satisfying the condition of the lemma. Let $\sigma$ be a permutation  of $[n]$ such that
$x_j^*=|x_{\sigma(j)}|$ for all $j\le n$. Denote $m=n_i$. Let
$$
  J^{\ell} =\sigma ([n_i]) \quad \mbox{ and } \quad   J^{r}=\sigma ([n-n_i+1, n]).
$$
 Denote $J= J^{\ell} \cup J^{r}$.
 Fix $\eps>0$ small enough. We assume that
 $\aa  < c_{\ref{graph th known}} \eps /2$. Then
 $m=n_i\leq n_2 \leq c_{\ref{graph th known}} \eps n/2d$.

Let $M\in\omep$.  Let the sets $\il(M)$ and  $\ir(M)$ be defined as
 before  Lemma~\ref{c:SJ}. Since $|J|=2m \le c_{\ref{graph th known}} \eps n/d $, this lemma implies  that
$
 |\il (M)|,\,|\ir (M)|\in[(1-4\eps)md,\,md],
$
 in particular $I=\il(M)\cup \ir(M)$ satisfy
 \begin{equation}\label{cond-card}
 |I|\in[2(1-4\eps)md,\,2md].
 \end{equation}
Now we split $\Mc$ into disjoint union of classes $\f (I, V)$ defined at the beginning of this subsection
 and note that $\omep \cap \f (I, V)\ne \emptyset$
implies that
$I$ satisfies (\ref{cond-card}).
Thus, to prove our lemma it is enough to prove uniform upper bound for  such classes,
indeed,
$$
  \Prob(E(x)\cap  \omep )\leq \max  \p (E(x) \cap \omep\,|\, \f (I, V))
  \leq \max  \p (E(x) |\, \f (I, V)),
$$
where the first maximum is taken over all classes $\f (I, V)$ with $\omep \cap \f (I, V)\ne \emptyset$ and
the second maximum is taking over $\f (I, V)$ with $I$'s satisfying (\ref{cond-card}).

Fix such a class $\f (I, V)$ for some
$I\subset [n]$ with $t_1:=|I|\in [2(1-4\eps)m d,\, 2md]$
and denote the uniform probability on it
just by $\p_\f$, that is
$$
 \p_\f (\cdot) = \p( \cdot \, | \,  \f (I, V)).
$$
Without loss of generality we assume that $I=[t_1]$.

By definition, for matrices $M\in E(x)$ we have
$$
  \|(M+\WW)x\|_2^2 = \sum _{i=1}^n | \langle R_{i}(M+\WW),\, \bar x \rangle|^2 \le md/576.
$$
Therefore there are at most $t_0:= md/36$
rows $R_i=R_i(M+\WW)$ with $| \langle R_{i},\, \bar x \rangle|\ge 1/4$. Hence,
$$
 |\{i\in I\, : \,| \langle R_{i},\, \bar x \rangle|< 1/4\}|\ge
   t_1-t_0.
$$
  Denote $t:=\lceil  t_1 - t_0\rceil$. The above bound
implies that for every $M\in  E(x)$ there is a set of indices $B(M)\subset I$
such that $|B(M)|=t$ and  for every $i\in B(M)$ one has
$| \langle R_{i},\, \bar x \rangle| < 1/4$.
Thus, denoting
$$
 \Omega_i:=\{M\in\f (I, V):\, |\langle R_{i},\, \bar x \rangle|< 1/4 \},
$$
we obtain
\begin{align}\label{bigprobf}
  \Prob_{\f}( E(x))&\le \sum _{B\subset I \atop |B|=t} \, \Prob_{\f}\Big(\bigcap_{i\in B}\Omega_i\Big)
  \leq   {t_1 \choose t}\, \max _{B\subset I \atop |B|=t} \, \Prob_{\f}\Big(\bigcap_{i\in B}\Omega_i\Big)
  \leq \left(\frac{e t_1}{t_0}\right)^{t_0}  \,\max _{B\subset I \atop |B|=t} \, \Prob_{\f}\Big(\bigcap_{i\in B}\Omega_i\Big).
\end{align}
Next for every $i\in I$ by $\f_{I}^\ell(i)$ and $\f_{I}^r(i)$ denote the sets
$$
  \{M\in\f(I, V):\, i\in\il (M)\} =
  \{M\in\f(I, V):\,|\supp R_i(M)\cap J^\ell|=1, \,
  \supp R_i(M)\cap J^r=\emptyset\}
$$
and
$$
 \{M\in\f(I, V):\, i\in\ir (M)\}=
  \{M\in\f(I, V):\,|\supp R_i(M)\cap J^r|=1, \,
  \supp R_i(M)\cap J^\ell=\emptyset\}.
$$
Clearly, for every $i$, the sets $\f_{I}^\ell(i)$ and $\f_{I}^r(i)$ form a partition $\f(I, V)$.
We show  that for every $i\in I$ either
$\Omega_i\subset \f^\ell_I(i)$ or $\Omega_i\subset \f^r_I(i)$.
Indeed, assume that $M_1\in\f^\ell_I(i)$ and $M_2\in\f^r_I(i)$.
By the definition of our sets and by the conditions on $x$, we have
$$
  J_1:=  \supp R_i(M_1)\setminus J=\supp R_i(M_2)\setminus J,
$$
and  there exist $j_\ell\in J^\ell$, $j_r\in J^r$ such that
\begin{align*}
 &\langle R_{i}(M_1),\, \bar x \rangle= x_{j_\ell}+\sum_{j\in J_1}  x_j \quad \mbox{ and } \quad
\langle R_{i}(M_2),\, \bar x \rangle=x_{j_r} + \sum_{j\in J_1}x_j.
\end{align*}
Hence,
$$
|\langle R_{i}(M_1+\WW),\, \bar x \rangle| + |\langle R_{i}(M_2+\WW),\, \bar x \rangle|\ge
 |\langle R_{i}(M_1+\WW),\, \bar x \rangle-\langle R_{i}(M_2+\WW),\, \bar x \rangle|
 $$
 $$
  = |x_{j_\ell}-x_{j_r}| \geq   x^*_{n_i} - |x_{j_r}|  \geq  1/2.
$$
Thus, it is impossible to simultaneously have both
$$
  |\langle R_{i}(M_1+\WW),\, \bar x \rangle|< 1/4 \quad \mbox{ and } \quad
  |\langle R_{i}(M_2+\WW),\, \bar x \rangle|< 1/4
$$
and therefore either
$\Omega_i\subset \f^\ell_I(i)$ or $\Omega_i\subset \f^r_I(i)$.
 This implies for every $B\subset I$ with $|B|=t$,
$$
   \Prob\Big(\bigcap_{i\in B}\Omega_i\Big) \leq
   \max_{B_0\subset B}  \, \Prob_{\f}\Big(\bigcap_{i\in B_0}\f^\ell_I(i) \,
   \bigcap  \, \bigcap_{i\in B\setminus B_0}  \f^r_I(i) \Big) =
  \max_{B_0\subset [t]}  \, \Prob_{\f}\Big(\bigcap_{i\in B_0}\f^\ell_I(i) \,
   \bigcap  \, \bigcap_{i\in [t]\setminus B_0}  \f^r_I(i) \Big) ,
$$
where in the last equality we used permutation invariance.

\begin{claim} \label{cl-one} If $d$ is large enough and $\eps$ is small enough then
 for every $B_0\subset [t]$ one has
$$
   \Prob_{\f}\Big(\bigcap_{i\in B_0}\f^\ell_I(i) \,
   \bigcap  \, \bigcap_{i\in [t]\setminus B_0}  \f^r_I(i) \Big)
   \leq e^{-t/3}.
$$
\end{claim}

Recall that $t_1\in [2(1-4\eps)m d,\, 2md]$, $t_0= md/36$, and $t=\lceil  t_1 - t_0\rceil$, so that
$$
  t/3-t_0\log(et_1/t_0)\geq md\left((2-8\eps-1/36)/3- (1/36) \log(72 e)) \right) \geq md/2,
$$
provided that $\eps$ is small enough. Therefore Claim~\ref{cl-one} and (\ref{bigprobf}) imply
the desired result.
\end{proof}

\medskip

\begin{proof}[Proof of Claim~\ref{cl-one}]
Fix $B_0\subset[t]$. Denote $\lll :=|B_0|$ and without loss of generality
assume that  $\lll \geq t/2$. Let $q=\lfloor \lll /2\rfloor$.
To compare the cardinalities of
$$
  A: = \bigcap_{i\in B_0}\f^\ell_I(i) \,  \bigcap  \, \bigcap_{i\in [t]\setminus B_0}  \f^r_I(i)
$$
and $\f(I, V)$ we construct a relation $R$ between them as follows.
Let $M\in A$. We say that $(M, M')\in R$ if  $M'\in \f(I, V)$ can be obtained from $M$
in the following way.  Choose a subset $B_1\subset B_0$ of cardinality $q$.
There are
$$
 {\lll  \choose q} \geq \frac{2^{\lll }}{2\sqrt{\lll }}
$$
such choices.  Let $i_1<i_2<\ldots <i_{q}$ be the elements of $B_1$.
Recall that $M\in \f^\ell_I(i_s)$ for every $s\leq q$.
Let $j_1, \ldots, j_q$ be elements of $J^\ell$ such that
$M$ has ones on positions $(i_s, j_s)$ for $s\leq q$.
Choose a subset $B_2\subset \ir(M)$ of cardinality $q$.
There are
$$
 {|\ir(M)| \choose q}\geq {\lceil(1-4\eps)md \rceil \choose q}
$$
  such choices.
Let $v_1<v_2<\ldots <v_{q}$ be elements of $B_2$. Let $w_1, \ldots, w_q$ be elements of $J^r$ such that
$M$ has ones on positions $(v_s, w_s)$ for $s\leq q$. Let $M'\in \f(I, V)$ be obtained from $M$ by
substituting ones with zeros on places $(i_s, j_s)$ and $(v_s, w_s)$ and substituting zeros with ones
on places $(i_s, w_s)$ and $(v_s, j_s)$ for all $s\leq q$. By construction we have
$$
   |R(A)| \geq \frac{2^{\lll }}{2\sqrt{\lll }}\, {\lceil (1-4\eps)md\rceil \choose q}.
$$

Now we estimate the cardinalities of preimages. Let $M'\in R(A)$. Then the set
$B_3=B_0\cap \ir(M')$ must have cardinality $q$. Write $B_3=\{i_1, i_2, \ldots i_{q}\}$
with $i_1<i_2<\ldots <i_{q}$. Let $w_1, \ldots, w_q$ be elements of
$J^r$ such that
$M'$ has ones on positions $(i_s, w_s)$ for $s\leq q$. If $(M, M')\in R$, $M$
has to have zeros on those positions. We now compute how many such matrices
$M\in \f(I, V)$ can be constructed, that is, how many possibilities to have ones
in rows $i_s$, $s\leq q$, exist. Since $M'\in R(A)$, we have
$$
  |\il(M') \setminus B_0|= |\il(M')| - (|B_0|-q) \leq md.
$$
Choose
$B_4\subset \il(M') \setminus B_0$ of cardinality $q$.
Write $B_4=\{v_1, v_2, \ldots, v_{q}\}$ with $v_1<v_2<\ldots <v_{q}$.
Let $j_1, \ldots, j_q$ be elements of $J^r$ such that
$M'$ has ones on positions $(v_s, j_s)$ for $s\leq q$.
Then $M$ is obtained from $M'$ by substituting zeros with ones
on places $(i_s, j_s)$ and $(v_s, w_s)$ and substituting ones with
zeros on places $(i_s, w_s)$ and $(v_s, j_s)$ for all $s\leq q$.
Thus, $|R^{-1}|$ is bounded above by the number of choices for the set $B_4$,
that is
$
   |R^{-1}(A)| \leq  { md \choose q}.
$
Using that for every integers $N$ and $s$ with $N-s>q$ one has
$$
 \frac{ {N\choose q}}{ {N-s\choose q}}=
   \frac{N...(N-s+1)}{(N-s)...(N-s-q+1)}
   \leq \left( \frac{N-s+1}{N-s-q+1}\right)^s\leq \exp
   \left(\frac{sq}{N-s-q+1}\right),
$$
that $q=\lfloor \lll /2\rfloor \leq t/2$, $t\leq t_1 -t_0 \leq  (2-1/36) md$,
and Claim~\ref{f} we observe that
$$
  \frac{|A|}{|\f(I, V)|}\leq
\frac{2\sqrt{\lll }}{2^{\lll }}\,
  \exp\left(\frac{q \, 4\eps md }{(1-4\eps) md - q +1}\right)
  \leq \frac{\sqrt{2 t}}{2^{t/2}}\,
    \exp\left(\frac{2\eps t md }{(1-4\eps) md - t/2}\right)
$$
$$
  \leq
  \frac{\sqrt{2 t}}{2^{t/2}}\, \exp\left(\frac{144\eps t}{1-288\eps}\right)
  \leq e^{-t/3},
$$
provided that $\eps$ is small enough and $d$ (hence $t$) is large enough.
\end{proof}

\subsection{Proof of Theorem \ref{t:steep}}

We are ready to complete the proof.

\begin{proof}[Proof of Theorem \ref{t:steep}.]
Recall that $d$ is large enough,  $\eps _0=\sqrt{(\log d) /d}$, $p=\lfloor 1/5\eps _0\rfloor$, and let $\eps$ be
a small positive constant from Lemma~\ref{individual}.
In most formulas below we assume that $n_0>1$, otherwise $T_0=\emptyset$ and the proof is easier. We make
corresponding remarks in the  text. Below we deal with matrices from
$$
  \Omega_0 =
  \bigcap _{j= 2}^{r+1}\Omega_{p^j,\eps_0}  \,  \cap\, \Omega_{k_1,\eps_0}
 \,  \cap\,   \bigcap _{i= 1}^{2}\, \Omega_{2 n_i,\eps },
$$
where $k_1=\min\{n_0, p\}$ and where
we do not have the first intersection if $n_1=n_0\leq p$ and we do not have the second term if
$n_1=n_0=1$.

\smallskip

If $x\in \st_0$ and $M\in \Omega_0$ then Lemma~\ref{l:T0}
implies that
$$
 \|(M-\ww  \idmat)x\|_2 \geq \min\left\{\sqrt{d/8n},\, \frac{p\sqrt{n_1 d}}{8 \hh_{r+1}}\r\}\, \|x\|_2.
$$

We turn now to the case $x\in \st_i$ for $i=1,2$. Let
\begin{align}
 &\Event_{i}:=\Big\{M\in\Mc\,:\,\exists\;x\in \st_i\,\, \,\mbox{such that}\,\,\,\|(M-\ww  \idmat) x\|_2\le
 \frac{\sqrt{ n_i d}}{25 \, b_i }\, \|x\|_2\Big\},\notag
\end{align}
where $b_1=\hh_{r+1}$ and $b_2=d^{3/2} h_{r+1}$ in the case $n_0>1$ and $b_2=d\sqrt{n}$ in the case $n_0=1$.
By Lemma~\ref{l:norma} for $x \in \st_i$ one has $\|x\|_2\le b_i  x_{n_i}^{*}$.
Thus, for $M\in \Event_{i}$ there exists $x=x(M)\in \st_i$ with
$$
 \|(M-\ww  \idmat) x\|_2\le \frac{\sqrt{ n_i d}}{25 }\,  x_{n_{i}}^{*}.
 $$
Normalizing $x\in \st_i$, so that $x_{n_{i}}^{*}=1$ (that is, $x\in \st_i'$), we observe that
there exists $y=y(x)$ from the net constructed in Lemma~\ref{l:net}  with $y_{n_{i}}^{*}\geq 1-d^{-3/2}>3/4$,
and $y_{j}^{*}\leq 1/4$ for $j> n_{i+1}$ and such that
$$
 \|x-y\|_2\leq \sqrt{n}\,  \|x-y\|_\infty \leq  d^{-3/2}\sqrt{n} \leq \frac{1}{600}\, \sqrt{n_i/d}.
$$
Therefore, using that $\|M\|=d$ and $|\ww|\leq d$, we have
$$
   \|(M-\ww  \idmat) y\|_2\le \|(M-\ww  \idmat) x\|_2 + (\|M\| +|\ww|) \|x-y\|_2 \leq
 \sqrt{ n_i d} /24 .
$$
Now we use the union bound over vectors in the net together with individual probability bounds.
Lemmas~\ref{individual} and \ref{l:net} imply for $i=1,2$,
$$
 \p\left(\Event_{i} \cap \Omega _0\r) \leq \exp \left(-n_i d/4\r).
$$

Combining all cases  we obtain that for $x\in \st$ one has $\|(M-\ww  \idmat) x\|_2 \leq A \nx$, where
$$
 A:= \min \left(  \frac{  \sqrt{d} }{ 2\sqrt{2n} },\,  \frac{  \sqrt{ n_1 d} }{25 b_1 }, \,  \frac{ \sqrt{ n_2 d} }{25 b_2 }\, \r)\, ,
$$
with probability at most $p_0:=\p\left( \Omega_0^c\r)+ \exp \left(-n_1 d/4\r) + \exp \left(-n_2 d/4\r)$.

 We first estimate $A$. If $n_0=n_1=1$ then $\st _0=\emptyset$, $d^2>n$, and $h_{r+1}=\sqrt{n}$. Therefore
$$
 \frac{\sqrt{n_1 d}}{25 b_1}= \frac{\sqrt{d}}{25\sqrt{n}} \quad  \mbox{ and } \quad
 \frac{\sqrt{n_2 d}}{25 b_2}\geq  \frac{\sqrt{\aa  }}{30d},
$$
which implies that $A\geq c/d$ in this case. If $n_1>1$ then
$n_1 d \geq \a n \log d / d \geq \aa n / d\approx n_2$.
Therefore, in the case $1<n_0=n_1\leq p$, one has
$$
     A\geq \sqrt{a_2 n}/(26 d^{3/2}h_{r+1}) \geq \sqrt{a_2 n}/(40 d^{3}\sqrt{\log d}) ,
$$
while in the case $n_0>p$, using that by (\ref{new-par-nnnn}), $n_1\leq a_1 \sqrt{\log d}\,  n/5d^{3/2}$,
$$
     A\geq \frac{\sqrt{a_2 n}}{26 d^{3/2}h_{r+1}} \geq \frac{\sqrt{a_2 n}}{26 \sqrt{3p}\, d^{3/2} n_1^{2+\alpha_d}}
     \geq \frac{\sqrt{a_2 n }\, d^{3+3\alpha d/2}}{3 \sqrt{p}\, d^{3/2} a_1^3 n^{2+\alpha_d}}  \geq
     \frac{\sqrt{a_2}\, d^{5/4} \log^2 d}{3  a_1^3 n^{3/2+\alpha_d}}.
$$

We now estimate the probability $p_0$ using Theorem~\ref{graph th known}. Recall that $c_1$, $c_2$, ... always
denote (sufficiently small) positive absolute constants.
First note that Theorem~\ref{graph th known} implies
$$
   p_1:= \sum_{i=1}^2 \left( \p\left( \Omega_{2 n_i,\eps }^c\r)  + \exp \left(-n_i d/4\r) \r)
$$
$$
  \leq
   \sum  _{i=1}^{2}  \left( \exp\left(-\frac{\eps^2 d n_i }{4} \log\left(\frac{e c_{\ref{graph th known}} \eps  n }{2 d n_i}\right)\right)
    + \exp \left(-\frac{n_i d}{4}\r) \r) \leq \exp \left(-c_1 n_1 d\r).
$$
In the case $n_1=n_0=1$ we have  $\a n \leq d^2/\log d$ and hence
$p_1\leq  \exp \left(-c_2 \sqrt{n} \r)$.
In the case $n_1>1$ we have $\a n \log d\geq d^2$, hence
$$
   n_1d\geq n_0d\geq (\a n\log d)/d\geq  \sqrt{\a n\log d},
$$
thus  again
$p_1\leq  \exp \left(-c_2 \sqrt{n} \r)$.

In the case $1<n_0=n_1\leq p$ we have $k_1=n_1$, $\a n \log d\geq d^2$, and $\a n \log^{3/2} d \leq d^{2.5}$.
Therefore,  by Theorem~\ref{graph th known},
$$
  p_2:=\p\left(\Omega_{k_1,\eps_0}^c\r) \leq  \exp\left(-\frac{n_1 \log d }{8} \log\left(\frac{e c_{\ref{graph th known}}  n\log d }{d^{3/2} n_1}\right)\right)
  \leq   \exp \left(-c_3 \log ^2 n  \r).
$$
 Recall that in the definition of $\Omega_0$
we do not have the first intersection if $n_1=n_0\leq p$ and we do not have the second term if
$n_1=n_0=1$. This implies that in the case $n_1\leq p$ we have  $p_0\leq p_1+p_2\leq \exp\left(-c_4  \log ^2 n   \r).$

\smallskip

Finally,  in the case $n_1> p$, we have $k_1=p$, $r\geq 1$, and,
$c_4 n \geq d^{5/2}/\log ^{3/2} d$. Therefore,  by Theorem~\ref{graph th known},
$$
 p_3:= \sum  _{i=2}^{r+1}\p\left( \Omega_{p^j,\eps_0}^c\r) + \p\left(\Omega_{k_1,\eps_0}^c\r) \leq
  \sum  _{i=1}^{r+1}
  \exp\left(-\frac{p^i\log d }{8} \log\left(\frac{e c_{\ref{graph th known}} \eps _0 n }{d p^i}\right)\right)
$$
$$
\leq
  \exp\left(-\frac{p\log d }{9} \log\left(\frac{e c_{\ref{graph th known}} \eps _0 n }{d p}\right)\right)
  \leq \exp\left(-c_5 \sqrt{ d \log d} \, \log n
  \right).
$$
Since $p_0\leq p_1+p_2+ p_3$, the desired estimate follows.
\end{proof}






\section{Bounds for essentially non-constant vectors and completing the proof of the main theorem}
\label{s: quantitative}

In this section, we complete our proof of the lower bound for the smallest
singular value of a random matrix uniformly distributed in
$\Mc$, shifted by $\ww\,\idmat$ for a fixed $\ww\in\C$.
To better separate various techniques used in this paper,
we prefer to give an ``autonomous'' proof of the result, conditioned
on a rather general assumption about the structure of the kernel of our random matrix.
This assumption, for a specific choice of parameters, is actually proved in Section~\ref{steep}
(see Remark~\ref{cortwoth}),
so the argument presented here implies the main result of the paper regarding the magnitude of $s_{n}$.
We provide the details in Section~\ref{prmainth}.

We start by introducing notations. Fix an $n\times n$ (complex) matrix $\WW$.
Further, take positive parameters $\kappa, \rho\in (0,1)$, and $\delta\in (0, 1)$
(the parameters may and in fact {\it will} depend on $n$ and $d$, moreover, we take
$\delta$ very close to zero).
Define the subset  $\Inc(\rho,\delta)$ of the unit sphere in $\C^n$ by
\begin{equation*}
\Inc(\rho,\delta):=\Big\{x\in\C^n:\,\|x\|_2=1\quad \mbox{ and }\quad
  \forall\lambda\in\C\quad\vert \{i\le n :\, \vert x_i-\lambda\vert
> \rho\}\vert > \delta n\Big\}.
\end{equation*}
Note that for  $\delta =  \nn/n \approx  \aaa/\log d$ one has
$$
 \Inc(\rho,\delta) = \left(\C^n\setminus \BB (\rho)\r) \cap \left\{x\in\C^n:\,\|x\|_2=1\r\}.
$$
Further, define two events
\begin{align*}
\Event_{\ref{s: quantitative}}=
\Event_{\ref{s: quantitative}}(\WW,\kappa,\rho,\delta)&:=\bigl\{M\in\Mc:\,
\forall\;x\in\C^n \mbox{ with }\|x\|_2=1\mbox{ and}\\
&\hspace{1cm}\min(\Vert (M+\WW) x\Vert_2, \Vert \bar x (M+\WW)\Vert_2)
\le \kappa\mbox{ one has } x\in\Inc(\rho,\delta)\bigr\},
\end{align*}
and
$$
\Event_{\ref{th-ssv}}=
\Event_{\ref{th-ssv}}(\WW,\kappa)
:=\bigl\{M\in\Mc:\,s_{n}(M+\WW) \leq \kappa\bigr\}.
$$
The parameters $\WW,\rho,\delta,\kappa$ are usually clear from the context, and we will simply write
$\Event_{\ref{s: quantitative}}$ and $\Event_{\ref{th-ssv}}$ to denote the respective events.

\begin{theor}\label{th-ssv}
There exist positive absolute constants $c$, $C_0$, and $C$ with the following property.
Let $\delta\in (0,1)$, $\rho \in (0,1)$, $\kappa:=\rho^2/16$,  and
$$
  C \leq d\leq \frac{c \delta}{\log (e/\delta)} \, \, n.
$$
 Further, assume that $\WW$ is a complex matrix such that the event
$\Event_{\ref{s: quantitative}}=\Event_{\ref{s: quantitative}}(\WW,\kappa,\rho,\delta)$
has probability at least $1-1/n^2$. Then
$$
\Prob(\Event_{\ref{th-ssv}})\leq \frac{C_0 \sqrt{\log (e/\delta)}}{\delta^{3/2} }\, \, \frac{1}{\sqrt{d}}.
$$
\end{theor}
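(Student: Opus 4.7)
The plan is to reduce $\Event_{\ref{th-ssv}}$ to a statement about inner products of the rows of $M+\WW$ with an essentially non-constant vector, and then to exploit the freedom available after fixing all but two rows of $M$ in order to apply Erd\H{o}s's anti-concentration bound (Proposition~\ref{p:Erdos}).

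First, I would use the hypothesis on $\Event_{\ref{s: quantitative}}$ to pin down the structure of almost-null vectors. If $s_{n}(M+\WW)\le \kappa$ is witnessed by unit vectors $x,y$ with $(M+\WW)x=s_{n}y$, then both $\|(M+\WW)x\|_2=s_n\le \kappa$ and $\|y^\dagger(M+\WW)\|_2=s_n\le\kappa$ hold, so on $\Event_{\ref{s: quantitative}}$ both $x$ and $y$ lie in $\Inc(\rho,\delta)$. Moreover, $|\langle R_i(M+\WW),x^\dagger\rangle|=s_n|y_i|\le \kappa$ for every $i\in[n]$. Thus, up to the $1/n^2$ loss coming from $\Event_{\ref{s: quantitative}}^c$, the event $\Event_{\ref{th-ssv}}$ is contained in the event that there exists some $x\in \Inc(\rho,\delta)$ with $|\langle R_1(M+\WW),x^\dagger\rangle|\le \kappa$; by the row-permutation symmetry of the uniform measure on $\Mc$, it suffices to control this event for a single specified row.

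Second, I would partition $\Mc$ into equivalence classes by fixing the rows $R_3,\dots,R_n$ of $M$. On each class the column sums force $R_1+R_2$ to be a fixed $\{0,1,2\}$-vector; letting $S:=\supp R_1\triangle \supp R_2$ and $k:=|\supp R_1\cap \supp R_2|$, one has $|S|=2(d-k)$ and, conditional on the class, $R_1$ is a uniformly random $(d-k)$-subset of $S$. Propositions~\ref{p:disjoint-rows} and \ref{p:zero minor} guarantee that on a set of classes of probability very close to one, $|S|\ge 2(1-o(1))d$ and $S$ is well-distributed in $[n]$. On such classes $\langle R_1,x^\dagger\rangle=c_0(x)+\sum_{j\in S}\xi_j\bar x_j$, where $(\xi_j)_{j\in S}$ is a uniformly random $\{0,1\}$-sequence with prescribed sum; a pair-swap symmetrization converts this, up to an absolute multiplicative loss, into a Rademacher sum $\sum_\ell \varepsilon_\ell(\bar x_{j_\ell^+}-\bar x_{j_\ell^-})$ over roughly $d$ disjoint pairs in $S$, to which Proposition~\ref{p:Erdos} applies.

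Third, the anti-concentration bound requires many of the Rademacher coefficients $\bar x_{j_\ell^+}-\bar x_{j_\ell^-}$ to be bounded below in modulus. For this I would invoke Lemma~\ref{l: quantified claim4.7} applied to $x\in\Inc(\rho,\delta)$, which produces disjoint subsets $J,Q\subset[n]$ of cardinality $\ge (1-\delta)n/4$ such that $|x_i-x_j|\ge \rho/\sqrt{2}$ for all $i\in J$, $j\in Q$. Proposition~\ref{p:zero minor}, applied with $(\alpha,\beta)\asymp(1-\delta,\,d/n)$, then ensures that with high conditional probability both $|J\cap S|$ and $|Q\cap S|$ are of order $(1-\delta)d$, so we get that many pairs have coefficient of modulus $\ge \rho/\sqrt{2}$. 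Proposition~\ref{p:Erdos} now yields
\[
\Prob\bigl(|\langle R_1(M+\WW),x^\dagger\rangle-a|\le \kappa\,\big|\,\text{class}\bigr)\;\lesssim\;\frac{\kappa}{\rho\,\sqrt{(1-\delta)d}}.
\]
The main obstacle is that the witness vector $x$ depends on $M$ within the class: I would resolve this via a covering argument, using the observation that two matrices in the same class differing by a single switch on $S$ can share a witness $x$ only if the corresponding two coordinates of $\bar x$ differ by at most $O(\kappa)$. This constrains the set of relevant witnesses within each class to a controllable net, whose union-bound cost, combined with the logarithmic factor $\log(e/(1-\delta))$ from Proposition~\ref{p:zero minor} and the extra $(1-\delta)^{-1}$ arising when one compares $|S\cap J|$ and $|S\cap Q|$ with $|S|$, produces the final factor $\sqrt{\log(e/(1-\delta))}/(1-\delta)^{3/2}$.
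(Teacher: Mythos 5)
There is a genuine gap in the proposed reduction, which your later covering argument does not close.

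Your first step claims that, up to the $n^{-2}$ loss, $\Event_{\ref{th-ssv}}$ is contained in the event that there exists $x\in\Inc(\rho,\delta)$ with $|\langle R_1(M+\WW),x^\dagger\rangle|\le\kappa$, and that by permutation symmetry it is enough to bound this for a single row. That containment is true but useless: for essentially every $M\in\Mc$ there \emph{is} a unit vector $x$ nearly in the kernel of $M+\WW$ (some singular vector), and for that $x$ the inner product with \emph{every} row is $\le s_n\le\kappa$. The event you propose to bound therefore has probability close to one; it records nothing beyond the existence of a near-null vector, which is automatic. Your Littlewood--Offord estimate in the third step is a \emph{pointwise} statement for a fixed $x$; passing from there to a supremum over $x\in\Inc(\rho,\delta)$ requires a bona fide net with controlled cardinality. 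You gesture at this via the ``single-switch'' observation, but that observation only constrains individual coordinate differences of a shared witness, not the set of admissible witnesses, and you produce no bound on the size of such a net. In this dimension/parameter regime a naive net over $\Inc(\rho,\delta)$ has cardinality far too large for the anti-concentration gain $\asymp 1/\sqrt{(1-\delta)d}$ to survive a union bound.

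The paper avoids the net over witnesses entirely. It first uses Lemma~\ref{l:RV} --- a Markov-type averaging over \emph{pairs} of rows, exploiting that for $x\in\Inc(\rho,\delta)$ many pairs $(i,j)$ satisfy $|x_i-x_j|>\rho$, so the inequality $\|x^\dagger A\|_2 \ge |x_i-x_j|\,d_{ij}$ reduces matters to controlling distances $d_{ij}$ from $R_i$ to the span of the remaining rows together with $R_i+R_j$. Then, inside each class $C_H$ (your second step starts similarly, fixing $R_3,\dots,R_n$), it picks a distinguished representative $\widetilde M\in\widetilde C_H$ and uses Lemma~\ref{lem-distance-rows-deterministic} to bound the distance for \emph{every other} $M\in C_H$ from below by $|\langle R_1(M+\WW)^\dagger,\, f_\WW(\widetilde M)\rangle|/4$. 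The point is that $f_\WW(\widetilde M)$ is a \emph{single fixed vector} serving as witness for the whole class, which is exactly what lets one apply Proposition~\ref{p:Erdos} with no further union bound over $x$. Lemma~\ref{l:bad rows} and Corollary~\ref{cor-pairs} are then needed to choose the pair $(\ell,j)$ and to ensure $f_\WW(\widetilde M)$ is essentially non-constant on $\supp R_\ell\triangle\supp R_j$. Your proposal has neither the pair-averaging reduction nor the per-class fixed witness; without one of these (or an explicit, workable net bound), the argument does not go through.
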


One can describe the structure of the above theorem as follows: provided that for a random matrix $M$
uniformly distributed in $\Mc$, vectors ``close'' to the kernel of $M$ are unstructured
(i.e., not almost constant),
the smallest singular value of $M$ is at least $\kappa$
with large probability (later we choose $\kappa$  to be a (negative) constant power of $n$).
Theorem~\ref{th-ssv} should be compared with the recent results of \cite{Cook-circ,BCZ}
discussed in the introduction.
The high-level structure of the theorem is in many respects similar to
\cite[Lemmas~6.2, 6.3]{Cook-circ},
where invertibility properties of the random matrix are also derived conditioned on a ``good'' event
which encapsulates properties of ``almost null'' vectors of the matrix.
In \cite{Cook-circ}, the linear spans of the matrix rows of $M$ are studied 
with the help of an auxiliary collection of random vectors (denoted as $u^{(i_1,i_2)}$), which are defined on a certain ``good'' event,
are measurable with respect to the sigma-algebra generated by the submatrix $M^{i_1i_2}$
and possess several specific structural properties (see \cite[Definition~6.1]{Cook-circ}).
Estimates of the smallest singular value are then reduced to bounding the inner product of $u^{(i_1,i_2)}$
with the difference of $i_1$-st and $i_2$-nd rows, for all pairs of indices $i_1,i_2$ \cite[Lemma~6.2]{Cook-circ}.
Existence of such random vectors $u^{(i_1,i_2)}$
is verified in \cite{Cook-circ} by considering the singular vectors of matrices $M$ corresponding to their smallest singular value.
This creates an additional level of abstraction, which we avoid in this paper by studying the singular vectors directly.
More specifically, given a class of matrices in $\Mc$ sharing the same $(n-2)\times n$ submatrix, we first choose a singular
vector corresponding to a matrix with a small $s_n$, then we use it for all matrices in the class to study invertibility
(see Lemma~\ref{l:H3}).

Our intention was to extract a linear algebraic part of the argument which is independent of the particular model
of randomness and to present it in a self-contained way (see Subsection~\ref{subs: srrsqm} below).
Estimates for the smallest singular value are connected to
distance estimates involving {\it pairs} of rows rather than the distance of a single
row to the span of the remaining rows (see \cite[Lemma 3.5]{RV}). We expect that those linear-algebraic arguments
may be used in other
random models of matrices with fixed row- or columns-sums.
The relations of Subsection~\ref{subs: srrsqm} are not explicitly given in \cite{Cook-circ},
although proofs of our lemmas use arguments similar to those empoyed in \cite[Lemma~6.2]{Cook-circ},
as well as in \cite{RV}.

\subsection{Some relations for random square matrices}\label{subs: srrsqm}

In this subsection we present two lemmas -- one probabilistic and the other linear algebraic --
which work for a wide class of square matrices.
The next lemma is analogous to \cite[Lemma 3.5]{RV}. The proof follows the same lines,
and we include it for the sake of completeness.

\begin{lemma}
\label{l:RV}
Fix parameters $\rho,\delta,\delta_0,\varepsilon>0$,
and assume that $0\leq \delta_0<\delta\leq 1-1/n$.
Further, let
$$K_0\subset K:= \{(i,j)\, :\, 1\leq i\ne j\leq n\}$$
be such that
$|K_0|\geq (1-\delta_0) n(n-1)$. Let $A$ be an $n\times n$ random matrix on
some probability space such that $\sum_{i=1}^n \row_i(A)=v$ a.s.\ for a fixed
vector $v\in \C^n$.
Then
\begin{align*}
\Prob\big\{&\inf_{x\in \Inc(\rho,\delta)} \Vert
\bar x  A\Vert_2 \leq  \varepsilon \rho\big\}
\\
&\leq \frac{1}{n^2(\delta-\delta_0)}\sum_{(i,j)\in K_0}
\Prob\big\{\d\big(\row_i(A), \spn\big\{\{\row_k(A)\}_{k\neq i,j},\, \row_i(A)+\row_j(A)\big\}\big)< \varepsilon \big\}.
\end{align*}
\end{lemma}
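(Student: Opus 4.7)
My plan is to carry out a standard Rudelson--Vershynin type reduction: I will lower bound $\|x^\dagger A\|_2$ for $x\in\Inc(\rho,\delta)$ in terms of distances $\d(\row_i(A), H_{ij})$ with
$H_{ij}:=\spn\{\{\row_k(A)\}_{k\neq i,j},\,\row_i(A)+\row_j(A)\}$,
and then average over pairs $(i,j)\in K_0$. The algebraic heart of the argument is the identity, valid for any $1\leq i\neq j\leq n$ and any $x\in\C^n$,
\begin{equation*}
x^\dagger A=\sum_{k=1}^n x_k\,\row_k(A)=(x_i-x_j)\,\row_i(A)+x_j\bigl(\row_i(A)+\row_j(A)\bigr)+\sum_{k\neq i,j}x_k\,\row_k(A).
\end{equation*}
Since the last two summands lie in $H_{ij}$, one obtains $|x_i-x_j|\,\d(\row_i(A),H_{ij})\leq\|x^\dagger A\|_2$. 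In particular, whenever $\|x^\dagger A\|_2\leq\varepsilon\rho$ and $|x_i-x_j|>\rho$, one has $\d(\row_i(A),H_{ij})<\varepsilon$.

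The second step is a counting estimate on "separated'' pairs. For any $x\in\Inc(\rho,\delta)$, applying the incompressibility condition with $\lambda=x_j$ gives $|\{i:\,|x_i-x_j|\leq\rho\}|\leq\delta n$, hence $|\{i\neq j:\,|x_i-x_j|>\rho\}|\geq(1-\delta)n$. Summing over $j\in[n]$ yields $|P(x)|\geq(1-\delta)n^2$ for $P(x):=\{(i,j):\,i\neq j,\,|x_i-x_j|>\rho\}$, and inclusion--exclusion inside the ambient set of $n(n-1)$ ordered distinct pairs gives
\begin{equation*}
|P(x)\cap K_0|\geq|P(x)|+|K_0|-n(n-1)\geq (1-\delta)n^2+\delta_0 n(n-1)-n(n-1)\geq(\delta_0-\delta)n^2.
\end{equation*}

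Finally, I would combine the two previous steps by a Markov-type averaging. Denote $\Omega:=\{\inf_{x\in\Inc(\rho,\delta)}\|x^\dagger A\|_2\leq\varepsilon\rho\}$. On $\Omega$, select (measurably, or by replacing the infimum with one over a fixed countable dense subset of $\Inc(\rho,\delta)$) some $x=x(A)\in\Inc(\rho,\delta)$ witnessing the bound; by the two previous steps, at least $(\delta_0-\delta)n^2$ pairs $(i,j)\in K_0$ satisfy $\d(\row_i(A),H_{ij})<\varepsilon$. Hence, pointwise,
\begin{equation*}
(\delta_0-\delta)\,n^2\,\mathbf{1}_\Omega\;\leq\;\sum_{(i,j)\in K_0}\mathbf{1}_{\{\d(\row_i(A),H_{ij})<\varepsilon\}},
\end{equation*}
and taking expectations produces the asserted inequality.

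I do not foresee any serious obstacle. The only mildly delicate point is the measurable choice of $x(A)$ on $\Omega$, which is routine given continuity of $x\mapsto\|x^\dagger A\|_2$ and separability of $\Inc(\rho,\delta)$. I note that the row-sum hypothesis $\sum_k\row_k(A)=v$ is not actually used in the algebraic identity above; it enters naturally in the intended applications, where $H_{ij}$ can equivalently be written as $\spn\{\row_k:\,k\neq i,j\}+\spn\{v\}$, a form convenient for conditioning arguments on the remaining rows.
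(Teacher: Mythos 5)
Your proof is correct and follows essentially the same route as the paper: the same algebraic decomposition of $x^\dagger A$ giving $\|x^\dagger A\|_2\geq |x_i-x_j|\,\d(\row_i(A),H_{ij})$, the same incompressibility-based count of separated pairs inside $K_0$, and the same Markov-type averaging (the paper phrases it contrapositively via an event $\Event$ on which few pairs have small distance, but the content is identical). The only cosmetic slip is that $x^\dagger A=\sum_k \bar x_k\,\row_k(A)$ (conjugated coefficients), which is harmless since $|x_i-x_j|=|\bar x_i-\bar x_j|$.
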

\begin{proof}
In this proof for $i\leq n$ we denote $\row_i(A)$ just by $\row_i$.
Without loss of generality we assume that $\sum_{i=1}^n \row_i=v$ everywhere on the probability space.
For each pair $(i,j)\in K$, set
$$
  d_{ij}=d_{ij}(A):=\d\big(R_i, \spn\big\{\{R_k\}_{k\neq i,j},\, R_i+R_j\big\}\big) .
$$
Note that $d_{ij}=  \d\big(R_i, \spn\big\{\{R_k\}_{k\neq i,j},\,v\big\}\big).$
Since $\bar x  A=\sum_{k=1}^n \bar{x}_k R_k$, for every $(i,j)\in K$ we have
\begin{align*}
\Vert \bar x  A\Vert_2= \Big\Vert (\bar{x}_i-\bar{x}_j)\row_i+\bar{x}_j v+\sum_{k\neq i,j}(\bar{x}_k-\bar{x}_j)\row_k\Big\Vert_2\geq
|\bar{x}_i-\bar{x}_j|d_{ij}.
\end{align*}
The above relation is the principal point of the proof. Now, if ``many'' distances $d_{ij}$
are ``large'', then, since the vector $x$ is essentially non-constant, we can find a pair $(i,j)$
such that both $|\bar{x}_i-\bar{x}_j|$ and $d_{ij}$ are large, and we get a lower bound $\Vert \bar x  A\Vert_2>\varepsilon\rho$.
Thus, we can estimate the probability of the considered event in terms of probability that
``not so many'' distances $d_{ij}$ are large which is in turn done via Markov's inequality.
Below is a rigorous argument.

Let $K_1:=\{(i,j)\in K_0:\, d_{ij}\geq\eps\}$. Denote by $\Event$ the event that $|K_1|>(1-\delta) n^2-n$.
Note that if $M\in \Event^c$, we have
$$
|\{(i,j)\in K_0:\, d_{ij}<\eps\}| \geq  |K_0| - (1-\delta) n^2+n
\geq (\delta  - \delta_0) n^2 + \delta_0n \geq (\delta - \delta_0) n^2.
$$
Therefore, using Markov's inequality,
$$
\Prob(\Event^c) \leq \frac{\E(|\{(i,j)\in K_0\,:\, d_{ij}<\eps\}|)}{n^2(\delta-\delta_0)} =
\frac{1}{n^2(\delta-\delta_0)}\sum_{(i,j)\in K_0}\Prob\{d_{ij}<\eps\}.
$$

Now, we condition on the event $\Event$. Fix a vector $x\in \Inc(\rho,\delta)$.
By the definition
the set
$$
 K_2=K_2(x):=\{(i,j)\in [n]\times [n]\, : \, |\bar{x}_i-\bar{x}_j|>\rho\}
$$
contains at least $\delta n^2$ elements. Clearly,
$K_2\subset K$. Thus we have $K_1\cup K_2\subset K$ and
$$
 |K_1|+|K_2|>\delta n^2 - n +n^2(1-\delta)=n(n-1)=|K|.
$$
 Hence $K_1\cap K_2\neq\emptyset$.
Choose $(i_0, j_0)\in K_1\cap K_2$. Then
$$
\Vert \bar x  A\Vert_2\geq |\bar{x}_{i_0}-\bar{x}_{j_0}|d_{i_0j_0}>\rho\,\varepsilon.
$$
Summarizing, we have shown that
\begin{align*}
\Prob\big\{\inf_{x\in \Inc(\rho,\delta)} \Vert
\bar x  A\Vert_2 \leq \varepsilon \rho\big\}\le\Prob(\Event^c)
\le \frac{1}{n^2(\delta-\delta_0)}\sum_{(i,j)\in K_0}
\Prob\{d_{ij}<\eps\}.
\end{align*}
\end{proof}

\medskip

The above lemma will be used to reduce the question of bounding the smallest singular value
to estimating distances between rows or columns of our random matrix and certain linear subspaces of $\C^n$.
In order to estimate the distance between the first row $\row_1$ and
$\spn\{\row_1+\row_2,\row_3, \row_4, \ldots,  \row_n\}$ of a random matrix, we will
need the following lemma. Its proof uses similar linear algebraic arguments as an earlier
work \cite{Cook-circ} (see Lemma~6.2 there). However Lemma~\ref{lem-distance-rows-deterministic}
significantly differs from  \cite[Lemma~6.2 there]{Cook-circ} and works for a general square
matrix. We apply it later with $v$ being the vector at which $s_n(A)$ attains (see the definition of
$f(A)$ below).

\begin{lemma}\label{lem-distance-rows-deterministic}
Let $A$ be an $n\times n$ complex matrix (either deterministic or random) and denote
$\row_i:=\row_i(A)$, $i\leq n$. Further, let $A^{1,2}$ be the $(n-2)\times n$ matrix
obtained by removing the first two rows of $A$, and let $Y\subset \C^n$ be the
linear span  of $\row_1+\row_2$, $\row_3$, $\row_4, \ldots$, $\row_n$.
Then for every unit complex vector $v\in\C^n$ we have
$$
\d\big(\row_1, Y\big)\geq
\frac{s_{n}(A)\,|\langle \bar \row_1,v\rangle|}{s_{n}(A)+\Vert  A^{1,2} v \Vert_2+
|\langle \bar \row_1+\bar \row_2 ,v\rangle|}.
$$
In particular, if a unit complex vector $v\in\C^n$ satisfies
$$
   \Vert  A^{1,2} v \Vert_2\leq s_{n}(A) \quad \mbox{ and } \quad
   |\langle \bar  \row_1 +\bar \row_2, v\rangle|\leq 2s_{n}(A)
$$
then
$$
 \d\big(\row_1, Y\big)\ge{\vert\langle \bar \row_1, v\rangle\vert}/4 .
$$
\end{lemma}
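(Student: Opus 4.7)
The plan is a duality argument. Since
$$\d(\row_1, Y)\ge |\langle \row_1^\dagger, w\rangle|/\|w\|_2$$
for every $w$ which is orthogonal (in the appropriate Hermitian sense) to $Y$, it will suffice to produce a witness $w$ satisfying $\langle \row_1^\dagger, w\rangle = \langle \row_1^\dagger, v\rangle$ and whose norm is controlled by the right-hand side denominator divided by $s_{n}(A)$.

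To exhibit such $w$, I observe that $Y = A^T W$, where $W=\{z\in\C^n: z_1=z_2\}$, and the orthogonal complement of $W$ is spanned by $e_1-e_2$. Consequently the condition $w\perp Y$ translates into $Aw\in\spn(e_1-e_2)$, i.e.\ $(Aw)_i=0$ for $i\ge 3$ and $(Aw)_1+(Aw)_2=0$. With this in hand, I will set $w:=v-c$ and choose $c$ so that $Aw=\langle \row_1^\dagger, v\rangle\,(e_1-e_2)$; equivalently,
$$Ac = Av - \langle \row_1^\dagger, v\rangle\,(e_1-e_2).$$
By construction $Ac$ has vanishing first coordinate, second coordinate equal in absolute value to $|\langle \row_1^\dagger+\row_2^\dagger,v\rangle|$, and last $n-2$ coordinates coinciding with those of $A^{1,2}v$. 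The elementary inequality $\sqrt{a^2+b^2}\le a+b$ for $a,b\ge 0$ then gives
$$\|Ac\|_2 \le |\langle \row_1^\dagger+\row_2^\dagger, v\rangle| + \|A^{1,2}v\|_2.$$

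If $s_{n}(A)=0$ the claimed bound is trivial, so I may assume that $A$ is invertible and use $\|c\|_2 \le \|Ac\|_2/s_{n}(A)$. Combined with $\|v\|_2=1$, this yields
$$\|w\|_2 \le 1+\frac{\|Ac\|_2}{s_{n}(A)} \le \frac{s_{n}(A)+\|A^{1,2}v\|_2+|\langle \row_1^\dagger+\row_2^\dagger, v\rangle|}{s_{n}(A)}.$$
Because $\langle \row_1^\dagger, w\rangle=(Aw)_1=\langle \row_1^\dagger, v\rangle$ by the choice of $c$, inserting this into the duality inequality delivers exactly the estimate stated in the lemma. The ``in particular'' clause is then immediate: the two hypotheses on $v$ force the denominator to be at most $4s_{n}(A)$, producing $\d(\row_1, Y)\ge |\langle \row_1^\dagger, v\rangle|/4$.

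There is no serious obstacle here; once the duality identification between $Y^\perp$ and the preimage under $A$ of $\spn(e_1-e_2)$ is recognised, the remainder is a few lines of linear algebra and the defining inequality for $s_{n}(A)$. The only point requiring attention is the consistent bookkeeping of complex conjugations when passing between the bilinear form $\langle \cdot,\cdot\rangle$ and the Hermitian orthogonality implicit in $\d(\row_1, Y)$; depending on the convention this may require replacing $A$ by $\bar A$ or $v$ by $v^\dagger$ at a couple of places, but none of these cosmetic adjustments affects the structure of the argument.
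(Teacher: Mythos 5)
Your proposal is correct and takes a genuinely different route from the paper's. The paper argues entirely on the primal side: it parametrises an arbitrary $x\in Y$ by coefficients $b,a_3,\dots,a_n$, notes the identity $\row_1-x=A^Ty$ with $y=(1-b,-b,-a_3,\dots,-a_n)$ to obtain $\|\row_1-x\|_2\geq s_n(A)\|y\|_2$, and separately uses Cauchy--Schwarz to bound $|\langle x,\bar v\rangle|$ by $\|y\|_2\big(|\langle\row_1+\row_2,\bar v\rangle|+\|A^{1,2}v\|_2\big)$, combining the two to close. You instead produce an explicit dual certificate: writing $Y=A^T\{z:z_1=z_2\}$ you identify $Y^\perp$ with the preimage of $\spn(e_1-e_2)$ under a conjugate of $A$, and choose $w$ in this preimage so that $|\langle\row_1,w\rangle|$ equals $|(Av)_1|=|\langle\row_1^\dagger,v\rangle|$ while $\|w\|_2$ is controlled via one application of $\|c\|_2\le\|Ac\|_2/s_n(A)$. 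The two arguments are Lagrangian duals of one another; the paper's is a shade more calculational, yours is more structural and makes the role of $s_n(A)$ transparent. As you yourself flag, the conjugations as literally written need repairing --- the condition $w\perp Y$ (in the Hermitian sense) forces $A\bar w\in\spn(e_1-e_2)$ rather than $Aw\in\spn(e_1-e_2)$, and the scalar you prescribe should be $(Av)_1$ (whose modulus is $|\langle\row_1^\dagger,v\rangle|$) so that the first coordinate of $Ac$ really does vanish. Carrying the bars through consistently (and using $s_n(\bar A)=s_n(A)$) recovers the stated bound verbatim, so these are indeed cosmetic as you claim. The dispatch of the degenerate case $s_n(A)=0$ matches the paper's implicit convention, and the deduction of the ``in particular'' clause is the same in both.
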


\begin{proof}
Let $x$
be a vector from $Y$, i.e.\  $x = b(\row_1+\row_2)+\sum_{i= 3}^n a_i \row_i$
for some $b,a_3,a_4,\dots,a_n\in\C$.
Fix a unit vector $v\in\C^n$.
We clearly have
\begin{equation}\label{eq: aux 2-958}
 \Vert \row_1-x\Vert_2\geq \vert\langle \row_1-x,\bar v \rangle\vert
\ge \vert\langle \row_1,\bar v \rangle\vert -\vert \langle x,\bar v \rangle \vert.
\end{equation}
Consider the vector
$y:=(1-b,-b,-a_3,\ldots, -a_n)$. Then, $\row_1-x=A^T y$, whence
$$
\Vert \row_1-x\Vert_2\geq s_{n}(A^T) \Vert y\Vert_2= s_{n}(A) \Vert y\Vert_2.
$$
Therefore, using the Cauchy--Schwarz inequality, we obtain
\begin{align*}
\vert \langle x,\bar v \rangle \vert&\leq |b|\vert \langle \row_1+\row_2,\bar v \rangle\vert
+\Big(\sum_{i= 3}^n |a_i|^2\Big)^{\frac{1}{2}}
\Big(\sum_{i= 3}^n |\langle \row_i,\bar v \rangle|^2\Big)^{\frac{1}{2}}\\
&\le \Vert y\Vert_2 \left( \vert \langle \row_1+\row_2,\bar v \rangle\vert+\Vert  A^{1,2} v \Vert_2\right)
\\
&\le \frac{1}{s_{n}(A)}\, \Vert \row_1
-x\Vert_2 \left( \vert \langle \row_1+\row_2,\bar v \rangle\vert+\Vert A^{1,2} v\Vert_2\right).
\end{align*}
This, together with \eqref{eq: aux 2-958}, implies that
$$
\Vert \row_1-x\Vert_2\geq \frac{s_{n}(A)\,\vert\langle \row_1,\bar v \rangle\vert}
{s_{n}(A)+\vert \langle \row_1+\row_2,\bar v \rangle\vert
+\Vert A^{1,2} v\Vert_2}.
$$
The lemma follows by taking the infimum over $x\in Y$.
\end{proof}

We would like to note that for a unit vector $v_0$, orthogonal to the span of
$\row_1+\row_2$, $\row_3$, $\row_4, \ldots$, $\row_n$,
we have $A^{1,2}\bar v _0=0$ and $\langle \row_1+\row_2,v_0\rangle=0$, so the lemma applied to $\bar v _0$
gives a trivial bound $\d\big(\row_1, Y\big)\ge{\vert\langle \row_1,v_0\rangle\vert}$.
Thus Lemma~\ref{lem-distance-rows-deterministic} can be viewed as a ``continuous'' version of this trivial estimate.

\subsection{Proof of Theorem~\ref{th-ssv}}

For the rest of the section, we fix a function $f$ on the set of $n\times n$ complex matrices,
which associates with every matrix $A$ a complex vector $f(A)$ such that
$\Vert Af(A)\Vert_2=s_{n}(A)$.
Note that in general the corresponding singular vector
is not uniquely defined, so we fix {\it some} vector $f(A)$ satisfying the above condition.
Since we work with shifted matrices, we also adopt another notation: given a (fixed) complex matrix $\WW$, by
$f_{\WW}$ we denote the function on the set of $n\times n$ matrices defined
by $f_{\WW}(A):=f(A+\WW)$.

\smallskip

Fix parameters $\kappa,\rho>0$, $\delta\in (1/\sqrt{d},1)$ and a complex matrix $\WW$
(note that for $\delta\leq 1/\sqrt{d}$ the bound for probability in Theorem~\ref{th-ssv} becomes
greater than one, hence the theorem holds automatically).
For the rest of the section, we assume that the parameters are given, and will specify each time
what restrictions on the numbers $\kappa,\rho,\delta,d$ and the matrix $\WW$ we impose.
Further, define
$$
  \eps_1=\eps_1(\delta):=\delta/ (C_1 \log (2e/\delta),
$$
where $C_1$ is a sufficiently large absolute constant (it is enough to take
 the constant from Proposition~\ref{p:zero minor} multiplied by $9$).
Set $\alpha:=\delta/(9\eps_1 d)$ and  $\beta:=\delta/2$.
Note that with such a choice of $\alpha$, $\beta$ we have
$\alpha \geq  (C  \log (e/\beta))/{d}$ and, using that $\delta >1/\sqrt{d}$ and that $d$ is  large enough,
we also have $\alpha \leq \min (\beta, 1/4)$. In other words the conditions of Proposition~\ref{p:zero minor}
are satisfied.
Let $\Om0=\Om0(\alpha, \beta)$ and $\O1$ be the events defined in and after Proposition~\ref{p:disjoint-rows}.
Define the event
$$
 \Event_{0}=\Event_0(\WW,\kappa,\rho,\delta):=\Om0^c\cap\O1\cap\Event_{\ref{s: quantitative}}.
$$
In words, $\Event_0$ corresponds to the set of matrices in $\Mc$ without large zero submatrices,
with almost no overlap between supports of any two rows or columns, and with the structural assumption
on vectors ``close'' to the kernel of the respective shifted matrix.
Note that under  assumptions of Theorem~\ref{th-ssv}, by Propositions~\ref{p:disjoint-rows} and \ref{p:zero minor}
we have
\begin{equation} \label{coroftwoprop}
\Prob(\Event_{0}^c)\le 2n^{-2}
\end{equation}
(the assumption on $d$ in Theorem~\ref{th-ssv} comes from $d\leq \eps _1 n / 6$ needed in Propositions~\ref{p:disjoint-rows}).

\medskip

The next lemma shows, roughly speaking, that there are relatively few matrices $M\in\Mc$
such that the corresponding singular vector $f_{\WW}(M)$ is ``almost constant''
when restricted to supports of a large number of rows of $M$. It is similar to
Lemmas~4.15 and 4.16 in \cite{LLTTY:15} and to Lemma~6.2 in \cite{Cook-circ}.

\begin{lemma} \label{l:bad rows}
Assume that $d$ is large enough.
For every pair of indices $\ell \ne i$ define the event
\begin{align*}
\Event_{\ref{l:bad rows}}^{\ell, i} : =&\big \{M\in \Event_{\ref{th-ssv}}\cap\Event_{0} \, :\, \,
\exists \lambda\in\C \mbox{ such that }
\\
&\big\vert \{j\in\supp(\row_\ell(M)+\row_i(M)):\,|(f_{\WW}(M))_j-\lambda|\leq \rho/4\}\big\vert
> 2(1-2\eps _1)d \big\}.
\end{align*}
Then for every (fixed) $\ell \leq n$ one has
\begin{align*}
\sum_{i:\,i\ne \ell} |\Event^{\ell, i}_{\ref{l:bad rows}}|  \le \frac{\delta n}{9\eps _1 d}  \, |\Mc|.
\end{align*}
\end{lemma}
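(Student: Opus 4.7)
The plan is to bound $|I(M)|:=\#\{i\ne\ell:M\in\Event_{\ref{l:bad rows}}^{\ell, i}\}$ uniformly over $M\in\Event_{\ref{th-ssv}}\cap\Event_0$, and then sum. Fix such an $M$ and let $x:=f_\WW(M)$, so $\|(M+\WW)x\|_2=s_n(M+\WW)\le\kappa$. Since $M\in\Event_{\ref{s: quantitative}}$ by the definition of $\Event_0$, this forces $x\in\Inc(\rho,\delta)$. For each $i\in I(M)$ fix a witness $\lambda_i\in\C$ and the set $G_i:=\{j\in\supp(R_\ell(M)+R_i(M)):|x_j-\lambda_i|\le\rho/4\}$ with $|G_i|>2(1-2\eps_1)d$.

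The first substantive step is to show that all the $\lambda_i$'s cluster in a single small disk. Writing $S:=\supp R_\ell(M)$ with $|S|=d$ and using $G_i\subseteq S\cup\supp R_i(M)$ we get $|G_i\cap S|\ge|G_i|-d>(1-4\eps_1)d$. Hence for any $i_1,i_2\in I(M)$, $|G_{i_1}\cap G_{i_2}\cap S|\ge 2(1-4\eps_1)d-d=(1-8\eps_1)d$, which is positive provided $C_1$ is large enough so that $\eps_1<1/8$. Choosing any $j$ in the intersection forces $|\lambda_{i_1}-\lambda_{i_2}|\le\rho/2$ by the triangle inequality. Fix $i_0\in I(M)$ arbitrarily, set $\lambda:=\lambda_{i_0}$ and $Z:=\{j:|x_j-\lambda|\le\rho\}$; the incompressibility of $x$ yields $|Z|\le\delta n$, while $G_i\subseteq Z$ for each $i\in I(M)$ (since $\rho/4+\rho/2<\rho$). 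The key per-row bound
\begin{equation*}
|\supp R_i(M)\cap Z^c|\le|\supp(R_\ell(M)+R_i(M))\setminus G_i|\le 2d-2(1-2\eps_1)d=4\eps_1 d
\end{equation*}
then holds for every $i\in I(M)$.

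The second substantive step is a contradiction with $M\notin\Om0(\alpha,\beta)$ (which is built into $\Event_0$), using the parameters $\alpha=(1-\delta)/(9\eps_1 d)$ and $\beta=(1-\delta)/2$ from the setup of Section~\ref{s: quantitative}. Suppose for contradiction that $|I(M)|\ge\alpha n$ and pick $I'\subseteq I(M)$ with $|I'|=\lceil\alpha n\rceil$. The number of ones in the submatrix $I'\times Z^c$ is at most $4\eps_1 d\,|I'|$, hence the number of columns of $Z^c$ on which every row of $I'$ vanishes is at least $(1-\delta)n-4\eps_1 d\,\lceil\alpha n\rceil\ge 5(1-\delta)n/9-4\eps_1 d$. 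This exceeds $\beta n=(1-\delta)n/2$ precisely when $(1-\delta)n\ge 72\eps_1 d$, which follows from $\eps_1=(1-\delta)/(C_1\log(2e/(1-\delta)))$ and the hypothesis $d\le c(1-\delta)n/\log(e/(1-\delta))$ once $c$ is small (equivalently, $d$ is large enough). Thus $M$ contains a zero submatrix of size $\lceil\alpha n\rceil\times\lceil\beta n\rceil$, contradicting $M\in\Om0^c(\alpha,\beta)$.

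Consequently $|I(M)|<\alpha n$ for every $M\in\Event_{\ref{th-ssv}}\cap\Event_0$, and summing over $M\in\Mc$ (with $|I(M)|=0$ outside this event) gives $\sum_{i\ne\ell}|\Event_{\ref{l:bad rows}}^{\ell,i}|=\sum_M|I(M)|\le\alpha n|\Mc|=\frac{(1-\delta)n}{9\eps_1 d}|\Mc|$, as required. I expect the delicate point to be matching constants in the final step: the per-row loss $4\eps_1 d$ produced by the clustering step, together with the precise choices $\alpha=(1-\delta)/(9\eps_1 d)$ and $\beta=(1-\delta)/2$, are calibrated so that the slack $|Z^c|-4\eps_1 d\cdot\alpha n$ just exceeds $\beta n$. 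Any noticeable loss in the per-row bound (say, if the $\lambda_i$'s failed to cluster and we were forced to invoke incompressibility through a union of disks) would prevent the contradiction from going through.
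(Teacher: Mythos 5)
Your proof is correct and follows the same overall strategy as the paper: use incompressibility of $f_\WW(M)$ to bound $|Z|\le\delta n$, extract a per-row bound $|\supp R_i(M)\cap Z^c|\le 4\eps_1 d$ from the membership $M\in\Event^{\ell,i}_{\ref{l:bad rows}}$, and observe that too many such indices would create a forbidden $\lceil\alpha n\rceil\times\lceil\beta n\rceil$ zero minor, contradicting $M\in\Om0^c(\alpha,\beta)$. The only organizational difference is your clustering lemma (that all witness $\lambda_i$'s lie within $\rho/2$ of each other because any two $G_{i_1},G_{i_2}$ overlap on $\supp R_\ell(M)$), which neatly replaces the paper's route of first passing to an auxiliary event $\Event$ depending only on $R_\ell$, fixing a single $\lambda_0$ there, and then running a two-case analysis on $|\lambda-\lambda_0|\lessgtr 3\rho/4$ to show $i\notin I_M$ implies $M\notin\Event^{\ell,i}_{\ref{l:bad rows}}$.
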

\begin{proof}
Without loss of generality, we can assume that $\ell =1$. Let $\Event$ denote the event
\begin{align*}
\big\{M\in \Event_{\ref{th-ssv}}\cap\Event_{0}\, :\, \,
&\exists \lambda\in\C \mbox{ with }\\
&\big\vert \{j\in\supp \row_1(M):|(f_{\WW}(M))_j-\lambda|\leq \rho/2\}\big \vert
>(1-4\eps _1)d \big\}.
\end{align*}
Note that  $\Event_{\ref{l:bad rows}}^{1, i}\subset\Event$ for every $i\geq 2$. Indeed if $M\in \Event_{\ref{l:bad rows}}^{1, i}$ for some $i\geq 2$, then
there exists $\lambda\in \C$ such that
$$
\big\vert \{j\in\supp(\row_1(M)+\row_i(M)):\,|(f_{\WW}(M))_j-\lambda|\leq \rho/4\}\big\vert
> 2(1-2\eps _1)d.
$$
Therefore
\begin{align*}
\big\vert \{&j\in\supp \row_1(M): |(f_{\WW}(M))_j-\lambda|\leq \rho/2\}\big \vert
\geq \big\vert \{j\in\supp \row_1(M):|(f_{\WW}(M))_j-\lambda|\leq \rho/4\}\big \vert \\ &\\
&\geq \big\vert \{j\in\supp(\row_1(M)+\row_i(M)):\,|(f_{\WW}(M))_j-\lambda|\leq \rho/4\}\big\vert- \big\vert \supp(\row_i(M)\big\vert\\
&\geq (1-4\eps _1)d,
\end{align*}
which means that $M$ belongs to $\Event$.
\smallskip
For every $M\in \Event$,  fix a number $\lambda_0=\lambda_0(M)\in\C$
such that
\begin{equation}\label{eq: bad-rows}
\big\vert \big\{j\in\supp \row_1(M):\,|(f_{\WW} (M))_j-\lambda_0|\leq \rho/2\big\}\big\vert>(1-4\eps _1)d.
\end{equation}

Now, take any $M\in \Event$ and let
$$
  J_M:=\{j\le n:\, |(f_{\WW}(M))_j-\lambda_0|\leq \rho\}.
$$
Since $M\in \Event _0 \subset \Event_{\ref{s: quantitative}}$ (i.e.,  all vectors ``close'' to the kernel of $M+\WW$
are essentially non-constant) and $\|(M+\WW)f_{\WW}(M)\|_2\leq\kappa$, we have $|J_M|\le (1-\delta) n$. Let also
$$
I_{M}:=\{i\le n:\, |\supp \row_i(M)\cap J_M|\ge (1-4\eps _1)d \}.
$$
We first show that $|I_{M}|\leq  \delta n/(9\eps _1 d)$. Assume the opposite.
Choose a set  $\widetilde{I}\subset I_{M}$ with $|\widetilde{I}|=\left\lceil \delta n/(9\eps _1 d) \r\rceil$.
Clearly,
$$
 \forall (i,j)\in \widetilde{I}\times \big(\cup_{i\in \widetilde{I}}\supp \row_i(M)\big)^c \quad  \mbox{ one has } \quad \mu_{ij}=0,
$$
and
\begin{align*}
\big|\big(\cup_{i\in \widetilde{I}}\supp \row_i(M)\big)^c\big|&\ge n-|J_M|-
\big|\cup_{i\in \widetilde{I}}\supp \row_i(M)\setminus J_M\big|\ge \delta n -4\eps _1 d |\, \widetilde{I}|\ge \delta n/2.
\end{align*}
This contradicts the assumption $M\in\Om0^c$ (no large zero-submatrices).

\smallskip

By the definition of $I_M$, for every $i\in I_{M}^c$,
$$
 |\{j\in\supp \row_i(M)\, :\, |(f_{\WW}(M))_j-\lambda_0|>\rho\}|\ge 4\eps _1 d.
$$
This implies for every $i\in I_{M}^c$ and for every $\lam$ satisfying $|\lambda-\lambda_0|\le 3\rho/4$,
$$
|\{j\in\supp (\row_1(M)+\row_i(M))\, :\,|(f_{\WW}(M))_j-\lambda|> \rho/4\}|\ge 4\eps _1 d.
$$
Using the triangle inequality together with \eqref{eq: bad-rows}, we also observe that for every
$\lam$ satisfying $|\lambda-\lambda_0| > 3\rho/4$,
\begin{align*}
|\{j\in\supp (\row_1(M)+\row_i(M))&\,:\,|(f_{\WW}(M))_j-\lambda| > \rho/4\}|\\
&\geq |\{j\in\supp (\row_1(M)+\row_i(M))\,:\,|(f_{\WW}(M))_j-\lambda_0| \leq  \rho/2\}|\\
&\geq |\{j\in\supp \row_1(M)\,:\,|(f_{\WW}(M))_j-\lambda_0| \leq \rho/2\}|\\
&\ge (1-4\eps _1) d \geq 4\eps _1 d.
\end{align*}
Thus for every $i\in I_{M}^c$ and every $\lambda\in \C$ we obtain
$$
 \big|\big\{j\in\supp (\row_1(M)+\row_i(M))\,:\,|(f_{\WW}(M))_j-\lambda|\leq  \rho/4\big\}\big|
 \le 2d  -4\eps _1 d.
$$
This proves that
 for every $M\in\Event$ and  $i\in I^c_{M}$ one has $ M\in\Event\setminus\Event^{1, i}_{\ref{l:bad rows}}$. Therefore,
\begin{align*}
\sum_{i=2}^n|\Event^{1, i}_{\ref{l:bad rows}}|=
\sum_{i=2}^n\sum_{M\in\Event}\chi_{\{M\in\Event^{1, i}_{\ref{l:bad rows}}\}}=
\sum_{M\in\Event}\sum_{i=2}^n\chi_{\{M\in\Event^{1, i}_{\ref{l:bad rows}}\}}
\le
\sum_{M\in\Event}|I_{M}|\le \frac{\delta n\,  |\Event|}{9\eps _1 d}    .
\end{align*}
\end{proof}

\begin{rem}
\label{r:bad rows}
Note that by Proposition~\ref{p:disjoint-rows} for every $i\neq \ell$ and every matrix $M\in\Event_{0}$
one has $|\supp R_i(M)\,  \cap\, \supp \row_\ell(M)|\leq 2\eps_1 d$. Therefore, for every $i\neq \ell$
and every matrix  $M\in( \Event_{\ref{th-ssv}}\cap\Event_{0})\setminus
\Event_{\ref{l:bad rows}}^{\ell, i}$, one has
\begin{align*}
 \forall\lambda\in\C \quad &\big\vert \{j\in \supp R_\ell(M)\,  \triangle\, \supp \row_i(M)\, :\,|(f_{\WW}(M))_j-\lambda|> \rho/4\}\big\vert\\
&>|\supp \row_i(M)\,  \triangle\, \supp \row_\ell(M)| -2d + 4\eps _1 d \geq 2 \eps _1 d ,
\end{align*}
where $\triangle$ denotes the symmetric difference of sets.
\end{rem}

\smallskip

The next observation is a direct consequence of Lemma~\ref{l:RV} and Lemma~\ref{l:bad rows}.

\begin{cor}\label{cor-pairs}
Assume that $0< \delta<1$,
and that $d$ satisfies the assumptions of Lemma~\ref{l:bad rows}.
Then there exists a pair $(\ell, j)\in [n]\times [n]$ with $\ell \ne j$ such that
\begin{equation}\label{cor-pairs-1}
|\Event^{\ell, j}_{\ref{l:bad rows}}|  \le \frac{1}{4\eps _1 d}\, |\Mc|.
\end{equation}
Moreover, setting $\row^\WW_i=\row^\WW_i(M):=\row_i(M+\WW)$ for  all $i\leq n$ and $M\in\Mc$, we have
for any $\varepsilon>0$,
\begin{align*}
 \big\vert\big\{M &\in \Event_{\ref{th-ssv}}\cap\Event_{0}\, :\, \inf_{x\in \Inc(\rho,\delta)} \Vert
 \bar x (M+\WW)\Vert_2 \leq \varepsilon \rho\big\}\big\vert
\\
 &\leq
 2 \big\vert\big\{M \in \Event_{\ref{th-ssv}}\cap\Event_{0}\, :\,
  \d\big(\row_\ell^\WW, \spn\big\{\{\row_k^\WW\}_{k\ne \ell, j},\,\row_\ell^\WW+\row_j^\WW\big\}\big)
< \varepsilon \big\}\big\vert/\delta.
\end{align*}
\end{cor}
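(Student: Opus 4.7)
The plan is to combine Lemma~\ref{l:RV} with Lemma~\ref{l:bad rows} through a Markov-type selection of a ``good'' pair $(\ell,j)$. First, summing the bound of Lemma~\ref{l:bad rows} over all $\ell\in[n]$ gives
\[
\sum_{(\ell,i):\,\ell\ne i}|\Event^{\ell,i}_{\ref{l:bad rows}}|\le \frac{(1-\delta)n^2}{9\eps_1 d}\,|\Mc|.
\]
I then define
\[
K_0:=\Big\{(\ell,i)\in[n]^2:\ell\ne i \text{ and } |\Event^{\ell,i}_{\ref{l:bad rows}}|\le \tfrac{1}{4\eps_1 d}|\Mc|\Big\}.
\]
Markov's inequality applied to the above sum bounds the complement (taken within the set of ordered pairs with $\ell\ne i$) by $|K_0^c|\le \tfrac{4(1-\delta)n^2}{9}$, whence $|K_0|\ge n(n-1)-\tfrac{4(1-\delta)n^2}{9}$. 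A routine algebraic check shows that this quantity is at least $\delta_0\,n(n-1)$ for $\delta_0:=(1+\delta)/2$ (so that $\delta_0-\delta=(1-\delta)/2$), using that $n$ is large in our regime. By construction every pair in $K_0$ satisfies~\eqref{cor-pairs-1}.

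Next, I would apply Lemma~\ref{l:RV} with this $K_0$ and $\delta_0$ to the random matrix $A:=M+\WW$, where $M$ is uniformly distributed on $\Event_{\ref{th-ssv}}\cap\Event_0$. The required invariance $\sum_i\row_i(A)=v$ almost surely is immediate since $\sum_i\row_i(M)=(d,\ldots,d)$ for every $M\in\Mc$, so $\sum_i\row_i(A)=(d,\ldots,d)+\sum_i\row_i(\WW)$ is a fixed vector even after conditioning on the subset $\Event_{\ref{th-ssv}}\cap\Event_0$. Multiplying through by $|\Event_{\ref{th-ssv}}\cap\Event_0|$ to convert probabilities to cardinalities, Lemma~\ref{l:RV} yields, for every $\varepsilon>0$,
\[
N(\varepsilon)\le \frac{1}{n^2(\delta_0-\delta)}\sum_{(i,j)\in K_0}N_{ij}(\varepsilon),
\]
where I set $N(\varepsilon):=|\{M\in\Event_{\ref{th-ssv}}\cap\Event_0 : \inf_{x\in\Inc(\rho,\delta)}\|x^\dagger(M+\WW)\|_2\le \varepsilon\rho\}|$ and $N_{ij}(\varepsilon):=|\{M\in\Event_{\ref{th-ssv}}\cap\Event_0 : \d(\row^\WW_i,\spn\{\{\row^\WW_k\}_{k\ne i,j},\row^\WW_i+\row^\WW_j\})<\varepsilon\}|$.

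Finally, bounding the sum over $K_0$ by $|K_0|\le n^2$ times its maximum and using the identity $\delta_0-\delta=(1-\delta)/2$ collapses the prefactor to $2/(1-\delta)$, so
\[
N(\varepsilon)\le \frac{2}{1-\delta}\max_{(i,j)\in K_0}N_{ij}(\varepsilon).
\]
Selecting $(\ell,j)\in K_0$ that realizes this maximum delivers the claimed inequality, and since such $(\ell,j)$ lies in $K_0$, condition~\eqref{cor-pairs-1} holds automatically. I don't anticipate a real obstacle in the argument; the only care needed is in calibrating $\delta_0$ so that the ratio $|K_0|/(n^2(\delta_0-\delta))$ reduces cleanly to the stated factor $2/(1-\delta)$, which is exactly why choosing $\delta_0$ as the midpoint $(1+\delta)/2$ is convenient.
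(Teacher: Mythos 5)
Your argument is correct and follows essentially the same route as the paper: deduce from Lemma~\ref{l:bad rows} that the set $K_0$ of pairs satisfying \eqref{cor-pairs-1} has cardinality at least $\delta_0 n(n-1)$ with $\delta_0=(1+\delta)/2$, invoke Lemma~\ref{l:RV} with $A=M+\WW$ and $M$ uniform on $\Event_{\ref{th-ssv}}\cap\Event_0$, and then pick a pair in $K_0$ that maximizes the distance-event count. The only cosmetic difference is that you establish the lower bound on $|K_0|$ by one global Markov estimate over all ordered pairs, whereas the paper observes it row-by-row (for each fixed $\ell$ at least $\delta_0(n-1)$ good $j$) and then sums; both reduce to the same inequality $9(n-1)\ge 8n$.
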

\begin{proof} Denote $K:=\{(\ell,j)\, :\, 1\leq \ell \ne j\leq n\}$. Set $\delta _0=\delta/2$.
 Lemma~\ref{l:bad rows} implies that for every fixed $\ell \leq n$ there are at least $(1-\delta _0) (n-1)$ choices of
 $j\ne \ell$  satisfying  \eqref{cor-pairs-1}. Therefore, the subset
$$
  K_0:=\big\{(\ell, j)\in K \, :\, (\ell, j) \, \, \mbox{ satisfies }\, \,   \eqref{cor-pairs-1}\big\}
$$
 has cardinality at least $(1-\delta _0) n(n-1)$.  Choosing a pair $(\ell, j)\in K_0$ with maximal
$$
 \big\vert\big\{M \in \Event_{\ref{th-ssv}}\cap\Event_{0}\, :\,
  \d\big(\row_\ell^\WW, \spn\big\{\{\row_k^\WW\}_{k\ne \ell, j},\,\row_\ell^\WW+\row_j^\WW\big\}\big)< \varepsilon \big\}\big\vert
$$
and applying Lemma~\ref{l:RV} to the random matrix $A=M+\WW$, where $M$ is uniformly distributed in
$\Event_{\ref{th-ssv}}\cap\Event_{0}$, we obtain the desired result.
\end{proof}

Corollary~\ref{cor-pairs} reduces the question of bounding the infimum over ``non-constant'' vectors
to calculating the distance between a particular matrix row and corresponding linear span,
and additionally makes sure that the singular vector $f_{\WW}(M)$ is essentially non-constant
when restricted to the union of the supports of $j$-th and $\ell$-th rows. The latter allows to apply
Littlewood--Offord--type anti-concentration statements.
Note that, instead of bounding the cardinality of the event $\Event_{\ref{th-ssv}}$ directly,
we will bound the cardinality of the intersection of $\Event_{\ref{th-ssv}}$ with a ``good'' event $\Event_0$,
and then use the fact that $\Event_0^c$ is small (under the assumptions of the theorem).

\bigskip

We are now ready to describe a partition of the event $\Om0^c\cap\O1$,
which will be used in the proof of Theorem~\ref{th-ssv}.
Fix $d$, parameters $\rho,\delta$ and complex matrix $\WW$.
Let $\kappa$ be defined as in Theorem~\ref{th-ssv} and assume that
all the conditions of the theorem (including assumptions on the parameters) are satisfied.
Let the pair $(\ell, j)$ be given by Corollary~\ref{cor-pairs}.
From now on, to simplify notation, we will assume that $(\ell, j)=(1,2)$.
We would like to  emphasize that the proof below can be carried for any admissible pair $(\ell,j)$
by simply adjusting indices.

Consider a set of $(n-2)\times n$ matrices
$$
\HH:=\{M^{1,2}:\, M\in\Om0^c\cap\O1\}.
$$
For every $H\in\HH$, let $C_H$ be the equivalence class of matrices sharing the same $(n-2)\times n$ submatrix, that is
$$
C_H:=\{M\in\Om0^c\cap\O1:\, M^{1,2}=H\}.
$$
Note that for  $M_1, M_2\in C_H$ one has $\row_1(M_1)+\row_2(M_1)=\row_1(M_2)+\row_2(M_2)$, that is
the intersection and the union of the supports of the first two rows is the same for all matrices in the class:
$$
  S_1=S_1(H):=\supp \row_1(M_1)\cap \supp \row_2(M_1)= \supp \row_1(M_2)\cap \supp \row_2(M_2)
$$
and
$$
 S_2=S_2(H):=\supp \row_1(M_1)\cup \supp \row_2(M_1)= \supp \row_1(M_2)\cup \supp \row_2(M_2).
$$
In particular, $|C_H|={2m \choose m}$, where $m=m(H)=|S_2\setminus S_1|$
is the cardinality of the symmetric difference of the supports of the first two rows for any matrix in $C_H$.
Observe that, because our matrices belong to $\O1$, we have
$m(H)\geq 2(1-\eps_1)d$.
In every class $C_H$, fix a subset $\Cl\subset C_H$ of matrices satisfying
$$
\forall \widetilde{M}\in \Cl\,\,\forall {M}\in C_H\setminus \Cl: \;\; s_{n}(\widetilde{M}+\WW)\le s_{n}(M+\WW)
\;\; \mbox{and}\;\;
\frac{1}{2\sqrt{\eps _1 d}}  \leq \frac{|\Cl|}{|C_H|} \leq\frac{1}{\sqrt{\eps _1 d} }.
$$
Thus, $\Cl$ is the set of matrices $\widetilde M$ delivering a ``small'' minimal singular value of $\widetilde M+\WW$,
compared to other matrices in $C_H$.
Denote $\Event_{\ref{l:bad rows}}:=\Event_{\ref{l:bad rows}}^{1,2}$ and define
\begin{align*}
\HH_1:=\{H\in\HH:\,\Cl\cap\Event^c_{\ref{th-ssv}}\neq\emptyset\}, \,\,\,
\HH_2:=\{H\in\HH_1^c:\,\Cl\subset\Event^c_{\ref{s: quantitative}}\cup\Event_{\ref{l:bad rows}}\},
\,\,\, \HH_3:=\HH_1^c\setminus \HH_2.
\end{align*}
Roughly speaking, the set $\HH_1$ is the collection of all $(n-2)\times n$ submatrix such that
a vast majority of the corresponding shifted matrices have ``large'' smallest singular value.
The set $\HH_2$ is the set of all submatrices not in $\HH_1$ such that the corresponding shifted matrices
have ``bad'' characteristics in regard to their ``almost null'' vectors as well as the vectors delivering the smallest singular value.
Finally, $\HH_3$ is all the remaining submatrices. It is the third category which is the most interesting for us and
which will require Littlewood--Offord type anti-concentration arguments.

Consider the partition
\begin{align}
\label{partition}
\Om0^c\cap\O1=\bigcup_{H\in\HH_1}C_H \cup\bigcup_{H\in\HH_2}C_H \cup \bigcup_{H\in\HH_3}C_H.
\end{align}
We will analyze separately each of the sets $\bigcup_{H\in\HH_i}C_H$, $i\leq 3$. First we show that
for $i=1,2$ the respective unions have a small cardinality.

By the definition of $\Event_{\ref{th-ssv}}$, for every $H\in\HH_1$
there exists a matrix $M\in\Cl$ with $s_{n}(M+\WW)>\kappa$. Hence, by the definition of $\Cl$,
$$
|\{M\in C_H:\, s_{n}(M+\WW)\le\kappa\}|\le |\Cl| \leq \frac{1}{\sqrt{\eps _1 d} }\, |C_H|,
$$
which implies
\begin{equation}
\label{H1c}
\Big\vert \bigcup_{H\in\HH_1}C_H\cap\Event_{\ref{th-ssv}}\Big\vert\le \frac{1}{\sqrt{\eps _1 d} }\, |\Mc|.
\end{equation}
Further, by the definitions of $\Cl$ and $\HH_2$, the assumptions of Theorem~\ref{th-ssv}, and
Corollary~\ref{cor-pairs}, we have
\begin{equation}
\label{H2}
\Big\vert \bigcup_{H\in\HH_2}C_H\Big\vert\leq 2\sqrt{\eps _1 d} \, \sum_{H\in\HH_2}\vert\Cl\vert
\le 2\sqrt{\eps _1 d} \, | \Event^c_{\ref{s: quantitative}}
\cup\Event_{\ref{l:bad rows}}|\le 2\sqrt{\eps _1 d} \,\Big(n^{-2}+\frac{1}{4\eps _1 d}\Big) |\Mc|.
\end{equation}

Regarding the set  $\HH_3$, we prove the following lemma.

\begin{lemma}\label{l:H3}
Denoting $\row_i^{\WW}=\row_i^{\WW}(M) :=\row_i(M+\WW)$, for $i\leq n$ and $M\in\Mc$, we have
\begin{equation*}
\Big\vert \Big\{M\in\bigcup_{H\in\HH_3}C_H:\, \d\big(\row_1^{\WW}, \spn\big\{\{\row_k^{\WW}\}_{k> 2},\,
\row_1^{\WW}+\row_2^{\WW}\big\}\big)
< \rho/16\Big\} \Big\vert \le C (\eps _1 d)^{-1/2}\, |\Mc|,
\end{equation*}
where $C>0$ is a universal constant.
\end{lemma}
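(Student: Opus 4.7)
\medskip

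\noindent\textbf{Proof plan.}
Fix $H \in \HH_3$. By the definition of $\HH_3$, we may select $\widetilde M = \widetilde M(H) \in \Cl \cap \Event_{\ref{s: quantitative}} \cap \Event_{\ref{l:bad rows}}^c$ and set $v = v_H := f_{\WW}(\widetilde M)$, a unit vector with $\|(\widetilde M+\WW)v\|_2 = s_{n}(\widetilde M+\WW) \le \kappa = \rho^2/16$. Since $\widetilde M \in \Event_{\ref{s: quantitative}}$ we have $v \in \Inc(\rho,\delta)$, and since $\widetilde M \notin \Event_{\ref{l:bad rows}}^{1,2}$ Remark~\ref{r:bad rows} yields
\[
\forall\lambda\in\C\quad \bigl|\{\,j\in S_2(H)\setminus S_1(H):\ |v_j-\lambda|>\rho/4\,\}\bigr| \ge 2\eps_1 d,
\]
where $S_1(H), S_2(H)$ denote the shared intersection and union of the supports of the first two rows across $C_H$.

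For every $M \in C_H$ the submatrix $(M+\WW)^{1,2}$ equals $H+\WW^{1,2}$ and the row sum $R_1(M+\WW)+R_2(M+\WW)$ coincides with that for $\widetilde M$. Hence $\|(M+\WW)^{1,2}v\|_2 \le s_{n}(\widetilde M+\WW)$ and $|\langle R_1(M+\WW)^\dagger+R_2(M+\WW)^\dagger, v\rangle| \le 2 s_{n}(\widetilde M+\WW)$. For $M \in C_H\setminus \Cl$ the definition of $\Cl$ gives $s_{n}(\widetilde M+\WW) \le s_{n}(M+\WW)$, so the ``in particular'' clause of Lemma~\ref{lem-distance-rows-deterministic} applies with $A=M+\WW$ and produces $\d(R_1(M+\WW),Y) \ge |\langle R_1(M+\WW)^\dagger, v\rangle|/4$. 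Thus the event $\d(R_1^{\WW},Y) < \rho/16$ forces $|\langle R_1(M+\WW)^\dagger, v\rangle| < \rho/4$. Matrices in $\Cl$ contribute at most $|\Cl|/|C_H| \le 1/\sqrt{\eps_1 d}$ to the normalized count, already of the desired shape, so it remains to count $M \in C_H\setminus \Cl$ satisfying this inner-product inequality.

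Parameterize $M \in C_H$ by $B := \supp R_1(M) \setminus S_1$, so that $B \subset S_2\setminus S_1$, $|B| = m := |S_2\setminus S_1|/2 \ge (1-2\eps_1)d$, and
\[
\langle R_1(M+\WW)^\dagger, v\rangle = \sum_{j\in B} \bar v_j + \alpha_H
\]
for a constant $\alpha_H \in \C$ depending only on $H$ and $\WW$. The problem thus reduces to the anti-concentration bound
\[
\bigl|\{B\subset S_2\setminus S_1 : |B|=m,\ |\textstyle\sum_{j\in B}\bar v_j - \beta| < \rho/4\}\bigr| \le \frac{C}{\sqrt{\eps_1 d}}\binom{2m}{m}
\]
for every $\beta \in \C$. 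Apply Lemma~\ref{l: quantified claim4.7} to $(\bar v_j)_{j\in S_2\setminus S_1}$ with $\rho\leftarrow\rho/4$ and $\eps = \eps_1 d/m \ge \eps_1$ (using $m\le d$), obtaining disjoint $J_*,Q_* \subset S_2\setminus S_1$ of size at least $\eps_1 d/2$ with $|\bar v_i - \bar v_j| \ge \rho/(4\sqrt{2})$ for all $(i,j) \in J_*\times Q_*$. Fix $q := \min(|J_*|,|Q_*|) \ge \eps_1 d/2$ disjoint pairs $(i_s,j_s)_{s\le q}$ with $i_s \in J_*,\, j_s \in Q_*$.

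For any $B$, decompose $[q]$ into $P_B$ (pairs contained in $B$), $N_B$ (pairs disjoint from $B$) and $K_B$ (pairs meeting $B$ in a single element), and set $B_0 := B\setminus\bigcup_s\{i_s,j_s\}$. The key observation is that once $(P_B, N_B, K_B, B_0)$ is fixed, the size constraint $|B|=m$ is \emph{automatically} preserved under arbitrary sign flips within the half-intersected pairs, so the residual freedom is an unconstrained sign vector $\sigma \in \{\pm 1\}^{K_B}$ (with $\sigma_s = +1$ encoding $i_s \in B$). The bad condition then becomes $|\sum_{s\in K_B}\sigma_s(\bar v_{i_s}-\bar v_{j_s})/2 - \gamma| < \rho/4$ for a suitable $\gamma$, with each coefficient of modulus at least $\rho/(8\sqrt{2})$. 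Proposition~\ref{p:Erdos} (after rescaling by $8\sqrt{2}/\rho$) bounds the number of bad $\sigma$'s by $C'\cdot 2^{|K_B|}/\sqrt{|K_B|}$, and a standard hypergeometric concentration argument shows $|K_B| \ge q/4$ for all but an $e^{-cq}$ fraction of $B$'s. Combining these two bounds with $q \ge \eps_1 d/2$ yields the claimed $O(1/\sqrt{\eps_1 d})$ estimate. The main obstacle is precisely this anti-concentration step: Proposition~\ref{p:Erdos} does not apply directly to the uniform measure on subsets of fixed size, and the pair-based decomposition is what reintroduces independent Rademacher signs while preserving the size constraint (a flip swaps one element of $B$ for another, keeping $|B|$ fixed).
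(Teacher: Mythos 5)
Your proposal reproduces the paper's proof essentially line for line: same reduction to the inner product via Lemma~\ref{lem-distance-rows-deterministic} on $C_H\setminus\Cl$, same application of Remark~\ref{r:bad rows} and Lemma~\ref{l: quantified claim4.7} to extract $\sim\eps_1 d/2$ separated pairs in $S_3$, and the same pair-based decomposition of $C_H$ (your $(P_B,N_B,K_B,B_0)$ is the paper's $(I,S)$) that converts the fixed-size constraint into free Rademacher signs on the half-intersected pairs, to which Proposition~\ref{p:Erdos} is applied, with the small-$|K_B|$ (resp.\ small-$|I|$) contribution discarded by the hypergeometric tail bound. The only cosmetic difference is that the paper splits the anti-concentration step into a separate Claim~\ref{claim-KT}; the substance is identical.
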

\begin{proof}
The set $\HH_3$ can be equivalently written as
$$
  \{H\in\HH\, :\, \Cl\subset  \Event_{\ref{th-ssv}} \quad \mbox{and} \quad \Cl
\cap \Event_{\ref{s: quantitative}}\cap\Event_{\ref{l:bad rows}}^c \neq \emptyset\} .
$$
Fix any $H\in\HH_3$ and a matrix $\widetilde{M}\in\Cl\cap\Event_{\ref{s: quantitative}}\cap\Event_{\ref{l:bad rows}}^c$.
For every $M\in C_H\setminus\Cl$ we have
$$
 \Vert (M+\WW)^{1,2}f_{\WW}(\widetilde{M})\Vert_2
 =\Vert (\widetilde{M}+\WW)^{1,2}f_{\WW}(\widetilde{M})\Vert_2
 \le s_{n}(\widetilde{M}+\WW)\le s_{n}(M+\WW)
$$
and
\begin{align*}
|\langle (\bar R_1(\widetilde{M}+\WW)) &+(\bar R_2(\widetilde{M}+\WW)),\,f_{\WW}(\widetilde{M})\rangle|
\\ &
\le 2 \Vert (\widetilde{M}+\WW) f_{\WW}(\widetilde{M})\Vert_2 =
 2 s_{n}(\widetilde{M}+\WW)\le 2s_{n}(M+\WW).
\end{align*}
This and Lemma \ref{lem-distance-rows-deterministic} applied to the matrix $M+\WW$
imply that for at least
$$
  |C_H|-|\Cl| \geq \big(1-1/\sqrt{\eps _1 d}\big)|C_H|
$$
 matrices $M\in C_H$, one has
\begin{align*}
 \d\big(R_1^{\WW}(M), \spn\big\{\{R_k^{\WW}(M)\}_{k> 2},\,R_1^{\WW}(M)+R_2^{\WW}(M)\big\}\big)
 \ge|\langle (\bar R_1(M+\WW)),\,f_{\WW}(\widetilde{M})\rangle|/4.
\end{align*}
The following claim, whose proof we postpone,  completes the proof of the lemma.
\end{proof}
\begin{claim}\label{claim-KT}
With the above notation, for every $H\in\HH_3$ and
$\widetilde{M}\in\Cl\cap\Event_{\ref{s: quantitative}}\cap\Event_{\ref{l:bad rows}}^c$ we have
\begin{align*}
\vert \{M\in C_H:\, |\langle (\bar R_1({M+\WW})),\,f_{\WW}(\widetilde{M})\rangle|< \rho/4\}\vert\le c (\eps _1 d)^{-1/2}\, |C_H|
\end{align*}
for some universal constant $c>0$.
\end{claim}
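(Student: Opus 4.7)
The plan is to exploit that within the equivalence class $C_H$, matrices differ only in how the symmetric difference $S_2\setminus S_1$ (of cardinality $m\geq 2(1-\eps_1)d$) is distributed between the supports of $R_1(M)$ and $R_2(M)$; equivalently, as $M$ ranges over $C_H$, the set $J(M):=\supp R_1(M)\cap (S_2\setminus S_1)$ is uniformly distributed over the size-$m/2$ subsets of $S_2\setminus S_1$. Set $v:=f_{\WW}(\widetilde M)$. Since $R_1(M)$ is $0/1$-valued and therefore unaffected by the dagger, a direct expansion yields
\[
\langle R_1(M+\WW)^\dagger,\,v\rangle = c_H + \sum_{j\in J(M)} v_j,
\]
where $c_H\in\C$ depends only on $H$, $\WW$, and $\widetilde M$. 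So the claim reduces to showing that for every $a\in\C$ the uniform random subset $J$ of $S_2\setminus S_1$ of size $m/2$ satisfies
\[
\p\Big(\Big|\sum_{j\in J}v_j - a\Big|<\rho/4\Big) \leq c\,(\eps_1 d)^{-1/2}.
\]

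To extract essential non-constancy of $v$ on $S_2\setminus S_1$, I would invoke Remark~\ref{r:bad rows}: since $\widetilde M\in\Cl\subset\Event_{\ref{th-ssv}}\cap\Event_0$ (using $H\in\HH_1^c$) and $\widetilde M\in\Event_{\ref{l:bad rows}}^c$ (using $H\in\HH_2^c$), for every $\lambda\in\C$ at least $2\eps_1 d$ indices $j\in S_2\setminus S_1$ satisfy $|v_j-\lambda|>\rho/4$. Applying Lemma~\ref{l: quantified claim4.7} (with threshold $\rho/4$ and parameter $\eps=\eps_1$, using $m\leq 2d$) then produces disjoint subsets $J_0,Q\subset S_2\setminus S_1$ of cardinality at least $\eps_1 d/4$ each, such that $|v_i-v_j|\geq \rho/(4\sqrt 2)$ whenever $i\in J_0$ and $j\in Q$. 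Pair up arbitrarily $N:=\min(|J_0|,|Q|)\geq \eps_1 d/4$ elements of $J_0$ with $N$ elements of $Q$, producing pairs $(a_k,b_k)_{k=1}^N$, and set $T:=\bigcup_{k=1}^N\{a_k,b_k\}$.

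The core anti-concentration step is a swap argument. For the random $J$, define $\xi_k:=\mathbf{1}[a_k\in J]-\mathbf{1}[b_k\in J]\in\{-1,0,1\}$ and let $A:=\{k\leq N:\xi_k\neq 0\}$ denote the set of pairs ``split'' by $J$. Conditioning on $J\cap\big((S_2\setminus S_1)\setminus T\big)$ and on the partition of $[N]$ induced by $J$ (which pairs lie in $A$, which are entirely inside $J$, which are entirely outside), the signs $(\xi_k)_{k\in A}$ are conditionally i.i.d.\ uniform on $\{\pm 1\}$: indeed, any sign assignment on the split pairs yields a valid subset of the required size and these are equally likely under the uniform measure. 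Writing
\[
\sum_{j\in J} v_j = (\text{fixed}) + \sum_{k\in A}\xi_k\,\frac{v_{a_k}-v_{b_k}}{2},
\]
and rescaling by $\alpha:=\rho/(8\sqrt 2)$ so that $x_k:=(v_{a_k}-v_{b_k})/(2\alpha)$ satisfies $|x_k|\geq 1$, the event $|\sum_{j\in J}v_j-a|<\rho/4$ becomes $|\sum_{k\in A}\xi_k x_k - a'|<2\sqrt 2$. Proposition~\ref{p:Erdos} (with $t=2\sqrt 2$) then yields the conditional bound $C_{\ref{p:Erdos}}\cdot 2\sqrt 2/\sqrt{|A|}$ whenever $|A|\geq 1$.

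It remains to integrate this conditional estimate. A direct computation shows $\E|A|=Nm/(2(m-1))\geq N/2-1$, while $\mathrm{Var}(|A|)=O(N)$ by a standard hypergeometric second-moment estimate (each pair is split with probability close to $1/2$, with pair-pair covariances of order $1/m$); alternatively, $|A|$ changes by at most $2$ under a single-element swap in $J$, so concentration also follows from bounded differences. Chebyshev's inequality then gives $\p(|A|\leq N/4)\leq C_0/N$, and hence
\[
\p\Big(\Big|\sum_{j\in J}v_j-a\Big|<\rho/4\Big) \leq \p(|A|\leq N/4) + \frac{4\sqrt 2\, C_{\ref{p:Erdos}}}{\sqrt{N}} \leq \frac{C'}{\sqrt{\eps_1 d}}.
\]
The main technically delicate point is the Chebyshev step: the pair-indicators $\mathbf{1}[k\in A]$ are weakly dependent because of the constraint $|J|=m/2$, so the second-moment computation must be carried out carefully with attention to the (negative) covariances, or one appeals to bounded differences as above.
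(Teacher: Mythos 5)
Your proposal is correct and follows essentially the same route as the paper's proof: both reduce to anti-concentration of $\sum_{j\in J}v_j$ over a uniformly random half-subset $J$ of $S_2\setminus S_1$, extract the separated pairs from Remark~\ref{r:bad rows} and Lemma~\ref{l: quantified claim4.7}, condition so that the ``split'' pairs contribute i.i.d.\ $\pm 1$ signs, and apply Proposition~\ref{p:Erdos}. The only (immaterial) difference is that the paper bounds the mass of classes with few split pairs by a direct combinatorial count rather than your Chebyshev/bounded-difference estimate, and your lower bound $\eps_1 d/4$ for $|J_0|,|Q|$ should read $\eps_1 d/2$ -- a harmless constant slip.
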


\begin{proof}[Proof of Theorem~\ref{th-ssv}]
Recall that $\Event_{0}=\Om0^c\cap\O1\cap\Event_{\ref{s: quantitative}}$ and that $\kappa=\rho^2/16$.
By (\ref{coroftwoprop}) we have
$$
 |\Event_{\ref{th-ssv}}|\le |\Event_{\ref{th-ssv}}\cap\Event_{0}|+|\Event^c_{0}|
 \leq  |\Event_{\ref{th-ssv}}\cap\Event_{0}|+ 2n^{-2}|\Mc|.
$$
Next, using the definitions of the events $\Event_{\ref{th-ssv}}$, $\Event_{\ref{s: quantitative}}$, and $\Event_{0}$, we observe that
\begin{align*}
|\Event_{\ref{th-ssv}}\cap\Event_{0}|&=\big|\{M\in\Event_{\ref{th-ssv}}\cap\Event_{0}:\,
 \inf_{\|x\|_2=1} \Vert \bar x (M+\WW)\Vert_2 \le\rho^2/16\}\big|
\\
&=\big|\{M\in\Event_{\ref{th-ssv}}\cap\Event_{0}:\, \inf_{x\in \Inc(\rho,\delta)} \Vert
\bar x (M+\WW)\Vert_2 \le\rho^2/16\}\big|.
\end{align*}
Recall that we agreed to assume that the pair of indices $(1,2)$ satisfies the conditions in Corollary~\ref{cor-pairs}.
In particular, this implies for $\row_i^{\WW}:=\row_i(M+\WW)$, $i\leq n$,
\begin{align*}
  |\Event_{\ref{th-ssv}}\cap\Event_{0}|  \leq
  (2//\delta) \, \big\vert\big\{M\in \Event_{\ref{th-ssv}}\cap\Event_{0}:\,
  \d(\row_1^{\WW}, \spn\big\{\{\row_k^{\WW}\}_{k\geq 2},\,\row_1^{\WW}+\row_2^{\WW}\big\})< \rho/16 \big\}\big\vert.
\end{align*}
Finally estimates  \eqref{partition}--\eqref{H2} and Lemma~\ref{l:H3} imply that
$$
 |\Event_{\ref{th-ssv}}\cap\Event_{0}|
\leq \frac{C'\, |\Mc|}{\delta \sqrt{\eps _1 d}}
$$
for a universal constant $C'>0$.
Since  $\eps _1=  \delta/ (C_1 \log (e/\delta))$, this implies the desired result.
\end{proof}

\subsection{Proof of Claim~\ref{claim-KT}}

We will use the notations from Lemma~\ref{l:H3} of the previous subsection.
Recall that
$$\widetilde{M}\in\Cl\cap\Event_{\ref{s: quantitative}}\cap \Event_{\ref{l:bad rows}}^c\subset
\Event_{0} \cap \Event_{\ref{th-ssv}}\cap \Event_{\ref{l:bad rows}}^c
$$
and that
$$
 S_1(H) = \supp \row_1(M)\cap \supp \row_2(M), \quad
 S_2(H) = \supp \row_1(M)\cup \supp \row_2(M)
$$
do not depend on the choice
of $M\in C_H$.
Denote
$$S_3:=S_2\setminus S_1 = \supp \row_1(M)\,\triangle\, \supp \row_2(M).$$
Take $y:= f_{\WW}( \widetilde M)$.
Using Remark~\ref{r:bad rows} and applying Lemma~\ref{l: quantified claim4.7} to the vector
$\{y_j\}_{j\in S_3}$ we find two disjoint sets $A_1, A_2\subset S_3$ with cardinalities
$|A_1|,|A_2|\geq \ell:=\lceil \eps_1 d/2\rceil$  and such that  for all $i\in A_1$ and $j\in A_2$ one has
$|y_i-y_j|\geq \rho/(4\sqrt{2})$.
For the rest of the proof, we fix $\ell$ couples of distinct indices
$(i_1,j_1),(i_2,j_2),\dots,(i_{\ell},j_{\ell})\in A_1\times A_2$.
Next, we define auxiliary subsets of $C_H$ as follows: for any subset $I\subset [\ell]$
and any $S\subset S_3\setminus \bigcup_{k\in I}\{i_k,j_k\}$
we set
\begin{align*}
{\rm cpl}(I,S):=\Big\{&M\in C_H\,:\, \{k\, : \, |\supp \row_1(M)\cap \{i_k,j_k\}|=1\} = I \, \, \, \mbox{ and}\\
& \supp \row_1(M)\setminus \Big(S_1\cup\bigcup_{k\in I}\{i_k,j_k\}\Big)
=S\Big\}.
\end{align*}
Roughly speaking, each subclass ${\rm cpl}(I,S)$ is obtained by picking a subset of the couples $(i_k,j_k)$ on which
the first row of a matrix is ``allowed to vary'' while fixing all other coordinates of $R_1$.
Note that subclasses ${\rm cpl}(I,S)$ can be empty for some $I,S$
and that the collection $\{{\rm cpl}(I,S)\}_{I,S}$ (taking all admissible $I$, $S$) forms a partition of the class $C_H$. Observe that
\begin{equation}\label{eq: aux 435234}
\Big|\bigcup_{|I|\leq \ell/4,\,S}{\rm cpl}(I,S)\Big|\leq  \frac{1}{d^2}\, |C_H|,
\end{equation}
where the union is taken over all subsets $I\subset[\ell]$
of cardinality at most $\ell /4$ and all admissible sets $S$, and  where $d$ is large enough.
Indeed, recall that the class $C_H$ can be identified
via a natural bijection with the collection of all $m$-element subsets of $[2m]$,
where $m:=|S_3|/2$.
With such an identification and by choosing an appropriate permutation of $[2m]$,
the set of matrices on the left hand side of \eqref{eq: aux 435234}
corresponds to the collection of $m$-element subsets $B$ of $[2m]$ such that $|\{k\leq \ell\, :\,
|B\cap\{k,k+\ell\}|=1\}|\leq \ell/4$, where
$$
 \ell\geq \eps_1 d/2 = \delta d/(2C_1 \log (2e/\delta) )\geq \sqrt{d}/(e \log (30 d)).
$$
 Then a direct calculation shows that for large $d$ the number of
such subsets $B$ is much less than $(\eps_1 d)^{-2}\, {2m \choose m}$.

\smallskip

As the final step in the proof of the claim, we fix a non-empty subclass ${\rm cpl}(I,S)$ with $|I|> \ell/4$
and observe that $|{\rm cpl}(I,S)|=2^{|I|}$.
In fact, each matrix $M$ in ${\rm cpl}(I,S)$ can be uniquely determined by picking either $i_k$ or $j_k$
for every $k\in I$ and then defining the support of the first row of $M$ as the union of the chosen indices,
the set $S$ and the intersection part $S_1$.
Moreover, for each $M\in {\rm cpl}(I,S)$ the inner product $\langle (\bar \row_1({M+\WW})),\, y\rangle$
 can be written as
$$
  \langle (\bar  \row_1({M+\WW})),\,y \rangle = \langle \bar \row_1({M}),\,y \rangle
  + \langle \bar  \row_1({\WW}),\,y \rangle =
  U+\sum_{k\in I}\xi_k(M) (\bar{y}_{i_k}-\bar{y}_{j_k}),
$$
where $U$ is a complex number which is {\it the same} for all $M\in {\rm cpl}(I,S)$, and
$\xi_k(M)$, $k\in I$, are $0/1$-valued functions of $M$ defined as $\xi_k(M):=|\supp \row_1(M)\cap\{i_k\}|$.
In other words, $\xi_k(M)$ is the indicator of the event that the support of the first row of $M$ contains $i_k$ and  not $j_k$.
It is not difficult to see that the functions $\xi_k(M)$, $k\in I$, considered as random variables uniformly distributed
on ${\rm cpl}(I,S)$, are jointly independent; and that
 for each $k\in I$ one has
$$
 |\{M\in {\rm cpl}(I,S):\,\xi_k(M)=1\}|= |{\rm cpl}(I,S)|/2=2^{|I|-1}.
$$
Further, by our choice of the pairs $(i_k,j_k)$, we have $|\bar{y}_{i_k}-\bar{y}_{j_k}|=|y_{i_k}-y_{j_k}|\geq\rho/(4\sqrt{2})$
for all $k\in I$.
Note that $\eta _k = 2\xi _k (M)-1$, $k\in I$, are independent $\pm 1$ Bernoulli random variables and
that for every $v\in \C^I$,
$$
   \sum_{k\in I}\xi_k(M) v_k =  \sum_{k\in I}\eta_k(M) v_k/2 + \sum_{k\in I} v_k/2.
$$
Therefore, applying Proposition~\ref{p:Erdos}, we obtain
$$
 \big|\big\{M\in{\rm cpl}(I,\,S):\,|\langle (\bar  \row_1({M+\WW})),\,y \rangle| <  \rho/4\big\}\big|
\leq {c\, |{\rm cpl}(I,S)|}/{\sqrt{|I|}}
$$
for some universal constant $c>0$.
Taking the union over all $|I|>\ell /4$, we get
$$
   \Big|\Big\{M\in \bigcup_{|I|>\ell/4,\,S}{\rm cpl}(I,S):\,|\langle (\bar \row_1({M+\WW})),\,y \rangle|  <
   \rho/4\Big\}\Big|\leq 2c\big|C_H\big|/\sqrt{\ell}.
$$
Together with \eqref{eq: aux 435234}, this proves the claim.



\subsection{Proof of the main theorem}
\label{prmainth}

Here we explain how Theorems~\ref{lem: a-c}, \ref{t:steep}, and  \ref{th-ssv}
imply our main result, Theorem~\ref{mainth}. Fix $\rho =1/(d^{3/2} \bb)$, $\kappa =\rho^2/16$,
and $\delta =\nn/n\geq \aaa/\log d$. Then the condition on $d$ means
$d\leq c n/((\log d)(\log \log d))$.
Fix $\ww\in \C$ with $|\ww|\leq d/6$ and $\WW=-\ww\idmat$.
Recall that
$$
   \Inc(\rho,\delta) = \left(\C^n\setminus \BB (\rho)\r) \cap \left\{x\in\C^n:\,\|x\|_2=1\r\} .
$$
As it was mentioned in Remark~\ref{cortwoth}, Theorems~\ref{lem: a-c} and \ref{t:steep} (applied  twice for matrices and for
their conjugates) imply    that $\p (\Event_{\ref{s: quantitative}})\geq 1-1/n^2$. Thus, applying Theorem~\ref{th-ssv}, we
obtain
$$
  \Prob(\Event_{\ref{th-ssv}})\leq \frac{C_1\log^{3/2} d \,  \sqrt{\log \log d}}{\sqrt{d}},
$$
which implies the probability bound. Next,
$$
 \kappa = \rho^2/16=1/(16 d^3 \bb^2),
$$
where
$\bb = 4 d^{3/2} \hh_{r+1}$
in the case $n_0>1$ and $\bb=d\sqrt{n}$ if $n_0=1$.
This implies
$$
  s_{n} \geq
    \left\{
	\begin{array}{ll}
		 c/( p d^{6} n_1^{4+2\alpha _d})
       & \mbox{ if } \, \, \, n_0> p, \\
             (c \log d)/( d^{9})  & \mbox{ if }  \, \, \, 1<n_0\leq p,
\\
         c/(d^{5}n)  & \mbox{ if } \, \, \,  n_0=1,
	\end{array}
 \right.
$$
If $1<n_0\leq p$, then $d^2 \leq \a n \log d$ and $d^{2.5}\geq \a n \log^{1.5} d$, therefore
$$
 d^9/\log d\leq C_1 n^{4.5} \log ^{3.5} n.
$$
 If $n_0> p$, then, using the definition of
$\alpha _d$, we observe $s_{n} \geq d^{3/2}\log^{4.5} d/C_2 n^{4+2\alpha _d}$.
This implies the estimate in Theorem~\ref{mainth}.
\qed

\begin{rem}\label{sharpb}
In fact we proved that there exists absolute positive constants $c$, $C_1$, and $C_2$ such that
$$
  s_{n} \geq
    \left\{
	\begin{array}{ll}
		  c d^{3/2} \log ^{4.5} d\,\, n^{-4-2\alpha _d}
       & \mbox{\rm if }\, \, \,  d< c_1 n^{2/5} \log^{3/5} n, \\
		 c n^{-4.5}\log^{-3.5} n  & \mbox{\rm if } \, \, \,   c_1 n^{2/5} \log^{3/5} n\leq d < c_2 \sqrt{n \log n} ,
\\
         c/(d^{5} n)  & \mbox{\rm if } \, \, \,   c_2 \sqrt{n \log n}\leq  d\leq \frac{c n}{(\log n)(\log \log n)}
	\end{array}
 \right.
$$
with probability at least $(C_1\log^{3/2} d \,  \sqrt{\log \log d})/\sqrt{d}$.
\end{rem}





\address

\end{document}